\theoremstyle{plain}
\newtheorem{theorem}{Theorem}[section]
\newtheorem{remark}[theorem]{Remark}
\newtheorem{lemma}[theorem]{Lemma}
\newtheorem{proposition}[theorem]{Proposition}
\newtheorem{definition}[theorem]{Definition}
\newcommand{\Z}{{\mathbb Z}}
\newcommand{\R}{{\mathbb R}}
\newcommand{\N}{{\mathbb N}}
\newcommand{\T}{{\mathbb T}}
\newcommand{\C}{{\mathbb C}}
\newcommand{\Sph}{{\mathbf S}} 
\def\({\left(}
\def\){\right)}
\def\<{\left\langle}
\def\>{\right\rangle}
\def\d{{\partial}}
\def\eps{\varepsilon}
\DeclareMathOperator{\RE}{Re}
\DeclareMathOperator{\IM}{Im}
\DeclareMathOperator{\Vect}{Vec}
\numberwithin{equation}{section}
\title{Small-time approximate controllability of the logarithmic Schr\"odinger equation} 
\author{Karine Beauchard, R\'emi Carles, Eugenio Pozzoli}
\date{}
\noindent  \textit{E-mail address}:
  \noindent  \textit{E-mail address}:
 \noindent  \textit{E-mail address}: 
\begin{document}

\maketitle

\begin{abstract}
We consider  Schrödinger equations with logarithmic nonlinearity and
bilinear controls, posed on $\T^d$ or $\R^d$. We prove their
small-time global $L^2$-approximate controllability. The proof
consists in extending to this nonlinear framework the approach
introduced by the first and third authors in \cite{beauchard-pozzoli2}
to control the linear equation: it combines the small-time  
controllability of phases and gradient flows. Due to the
nonlinearity, the required estimates are more difficult to establish
than in the linear case. The proof here is inspired by WKB analysis. 

This is the first result of (small-time) global approximate
controllability, for nonlinear Schrödinger equations, with bilinear
controls. 
\end{abstract}

\textbf{Keywords:} Nonlinear Schr\"odinger equation, controllability, logarithmic nonlinearity, WKB analysis.

\textbf{MSC codes:} 35Q55, 81Q20, 81Q93, 93C10, 93C20, 93B05

\section{Introduction}

\subsection{Models}

Let $d \in \N^*$ and $\lambda \in \R$. We consider logarithmic
Schr\"odinger equations (log-NLS) of the form
\begin{equation}\label{eq:schro}
\begin{cases}
\left( i\partial_t + \frac{1}{2} \Delta - V \right) \psi(t,x)=
\lambda \psi \log|\psi|^2(t,x) +\sum_{j=1}^m u_j(t)W_j(x) \psi(t,x), & (t,x) \in (0,T) \times M, \\
\psi(0,\cdot)=\psi_0,
\end{cases}
\end{equation}
where 
$M$ is either $\R^d$ or a smooth connected boundaryless Riemannian manifold,
$\Delta$ is the Laplace-Beltrami operator of $M$,
the functions $V, W_1,\dots,W_m: M \to \R$ are real valued potentials,
and the functions $u_1,\dots,u_m:(0,T) \to \R$ are real valued
controls, piecewise constant, $u_j\in
PWC([0,T],\R)$.
The time-dependent potential $\sum_{j=1}^m u_j(t)W_j(x)$ is possibly unbounded on $L^2(M,\C)$. For a time-dependent function 
$u=(u_1,\dots,u_m)$, and an initial state $\psi_0$
in the unitary sphere 
\begin{equation} \label{def:S}
\Sph:=\{\psi\in L^2(M,\C)\, ;\, \|\psi\|_{L^2(M)}=1\}.
\end{equation}
then $\psi(t;u,\psi_0)$ denotes - when it is well defined - the solution of \eqref{eq:schro}.
 In this article, we study in particular two examples of equations of the form \eqref{eq:schro}.

\medskip

\paragraph{An equation posed on $M=\T^d=\R^d/(2\pi\Z)^d$.}

The first example is the equation 
\begin{equation}\label{logNLS_tore}
\begin{cases}
\left( i\partial_t + \frac{1}{2} \Delta -V \right) \psi=
\lambda \psi \log|\psi|^2 +\sum\limits_{j=1}^d \( u_{2j-1}(t)\sin
  +u_{2j}(t)\cos)\langle b_j , x\rangle \)\psi,\quad
  (t,x)\in(0,T)\times\T^d,\\ 
\psi(0,\cdot)=\psi_0, &
\end{cases}
\end{equation}
where 
$V\in L^\infty(\T^d,\R)$, and 
\begin{equation} \label{Def:ej}
b_1=(1,0,\dots,0),\quad b_2=(0,1,\dots,0),\quad \dots, \quad b_{d-1}=(0,\dots,1,0),\quad b_d=(1,\dots,1).
\end{equation}

\paragraph{An equation posed on $M=\R^d$.}

The second example is the equation
\begin{equation}\label{logNLS+}
\begin{cases}
 \left( i\partial_t+ \frac{1}{2} \Delta - V \right)\psi
=\lambda \psi \log|\psi|^2
+\left( \sum\limits_{j=1}^d u_{j}(t)x_j
+u_{d+1}(t)e^{-|x|^2/2}  \right)\psi, \quad(t,x) \in (0,T) \times \R^d, \\
 \psi(0,\cdot)=\psi_0,  
\end{cases}
\end{equation}
where $V=V(x)$ is smooth, real-valued and at most quadratic, in the sense that
\begin{equation} \label{Hyp:V_transp}
V \in C^{\infty}(\R^d,\R) \text{ and } \forall \beta \in \N^d \text{
  with } |\beta|\ge 2, \ \partial_x^{\beta} V \in L^{\infty}(\R^d). 
\end{equation}
We will say that a (smooth) function is at most linear
if \eqref{Hyp:V_transp} holds for all $\beta\in \N^d\setminus\{0\}$. 
\medskip

In this article, we use piecewise constant controls $u$, then the solutions of \eqref{logNLS_tore} and \eqref{logNLS+} are well defined and $\psi(.;u,\psi_0) \in C^0([0,T],\Sph )$ (see Section~\ref{App:WP}).

\subsection{Bibliographical comments about bilinear control} 

The mathematical bilinear control theory of Schrödinger PDEs as
\eqref{eq:schro} has undergone a vast development in the last two
decades. Such theoretical problems find their origins in applications
of quantum control to physics and chemistry (e.g. absorption
spectroscopy) \cite{glaser}, or computer science (e.g. quantum
computation) \cite{preskill}. 

\subsubsection{Exact controllability}

The wavefunction is defined up to global phases (the state
$e^{i\theta}\psi_1$ for some constant $\theta\in \R$, is physically
the same as $\psi_1$) thus we can adopt the following definition. 

\begin{definition}[Exact controllability]
Let $\mathcal{H}$ be a subspace of $L^2(M,\C)$. We say that \eqref{eq:schro} is 
exactly controllable in $\mathcal{H}$ 
if, for every 
$\psi_0, \psi_1 \in \Sph \cap \mathcal{H} $ 
there exist 
a time $T>0$,
a global phase $\theta \in [0,2\pi)$
and a 
control $u:(0,T)\to\R$ such that 
$\psi( T;u,\psi_0)= \psi_1 e^{i\theta}$.
\end{definition}

In the linear case (i.e., $\lambda=0$ in \eqref{eq:schro}), by the seminal work \cite{BMS}, 
if the drift $i(\Delta-V)$ generates a group of bounded operators on
a Hilbert space $\mathcal{H}$, on which the control operators $W_j$
are bounded, then 
the equation \eqref{eq:schro} is not exactly controllable in
$\mathcal{H}$ (with controls $u \in L^{1}_{\rm loc}(\R,\R^m)$), because the reachable set has empty interior in $\Sph  \cap \mathcal{H}$  (see also \cite{Chambrion-Caponigro-Boussaid-2020} for recent developments). 

This topological obstruction to exact controllability persists for
Schrödinger equations with polynomial state nonlinearities \textit{à
  la} Gross-Pitaevskii (see
\cite{chambrion-laurent,chambrion-laurent2}). For logarithmic
nonlinearities as in \eqref{eq:schro}, the question is open. One might
expect to answer it by combining the techniques of 
\cite[Lemma~3.1]{chambrion-laurent2}, where the presence of an
$L^2$-eigenbasis (given by Hermite function when $V$ is the harmonic
potential) is crucial in order to uncouple space and time variables, with
\cite[Sections 3.2 and 3.3]{Hayashi-p}, where localizing arguments
make it possible to actually use Duhamel's formula, an aspect which appears to
be new for the logarithmic nonlinearity.

\medskip

To circumvent this topological obstruction, the exact controllability has been studied in spaces on which the $W_j$'s are not bounded. Such exact controllability results were proved for 1D-Schrödinger equations on an interval (see \cite{beauchard1,beauchard-coron,nersesyan-nersisyan,morancey-polarizability,bournissou} for linear equations and 
\cite{beauchard-laurent,beauchard-lange-teismann,duca-nersesyan2} for polynomial nonlinearities). These local controllability results are proved by linear test; this technique is not suitable 
for the multi-D case with non-$C^1$ nonlinearity, studied in this article.

\subsubsection{Approximate controllability (in potentially large time)}

\begin{definition}[Approximate controllability]
We say that \eqref{eq:schro} is 
$L^2$-approximately controllable
if, for every 
$\psi_0, \psi_1 \in \Sph $ and  $\varepsilon >0$, 
there exist 
a time $T>0$,
a global phase $\theta \in [0,2\pi)$
and a 
control $u\in PWC([0,T],\R^m)$ such that 
$\| \psi( T;u,\psi_0) - \psi_1 e^{i\theta} \|_{L^2} <\varepsilon$.
\end{definition}

Several methods were developed to prove the (large time) approximate controllability of linear Schrödinger equations with bilinear control, when the drift $-\Delta+V$ has discrete spectrum:
\begin{itemize}[topsep=0cm, parsep=0cm, itemsep=0cm]
\item  Applying the control theory for ODEs to the Galerkin approximations (i.e., the projections of the system onto
finite-dimensional eigenspaces of the drift) and estimating the error
between them and the solution of the PDE \cite{BCMS,MS-generic}; such techniques prove the large-time $L^2$-approximate controllability of  \eqref{logNLS_tore} with $d=1$ and $V,\lambda=0$  \cite{BCCS},
\item Finite-dimensional approximations and periodic control laws \cite{ervedoza},
\item Adiabatic approximation \cite{chittaro},
\item Lyapunov stabilization \cite{nerse,nersesyan}.
\end{itemize}

\medskip

Some large-time approximate controllability properties (for particular initial and final data) were also previously established for other PDEs, with bilinear control, e.g.: 
\begin{itemize}[topsep=0cm, parsep=0cm, itemsep=0cm]
\item 1D-linear wave equations in \cite{BMS}, via non-harmonic Fourier analysis,
\item 1D non-linear (resp. linear) heat equations in \cite{cannarsa-reactiondiffusion} (resp. \cite{cannarsa-khapalov}); the method employs shifting the points of sign change by making use of a finite sequence of initial-value pure diffusion problems,
\item 1D non-linear (resp. linear) wave equations in \cite{khapalov-wave1} (resp. \cite{khapalov-wave2}), via controlling a finite number of modes individually and subsequently in time.
\end{itemize}
In \cite{cannarsa-reactiondiffusion,cannarsa-khapalov,khapalov-wave1,khapalov-wave2} the control depends on both time and space variables.

\subsubsection{Obstructions to the small-time approximate controllability}

\begin{definition}
\label{def:STAC}
We say that \eqref{eq:schro} is \textbf{small-time $L^2$-approximately controllable} ($L^2$-STAC)  if, for every 
$\psi_0, \psi_1\in \Sph $ and
$\varepsilon >0$, there exist 
a time $T \in[0,\varepsilon]$,
a global phase $\theta \in [0,2\pi)$
and 
a control $u\in PWC([0, T ], \R^m)$ such that 
$\|\psi( T;u,\psi_0)-e^{i\theta} \psi_1\|_{L^2}<\varepsilon$.
\end{definition}
The small-time controllability has particularly relevant physical implications, both from a fundamental viewpoint and for technological applications. As a matter of fact, quantum systems, once engineered, suffer of very short lifespan before decaying (e.g., through spontaneous photon emissions) and losing their non-classical properties (such as superposition). The capability of controlling them in a minimal time is in fact an open challenge also in physics (see, e.g., the pioneering work \cite{khaneja-brockett-glaser} on the minimal control-time for spin systems). 

There exist examples of linear Schrödinger equations of the form \eqref{eq:schro} (with $\lambda=0$) which are $L^2$-approximately controllable in large times but not in small times. This obstruction happens e.g. when $M=\R^d$, in the presence of a sub-quadratic drift potential $V$ and quadratic or linear control potentials $W_j$, because Gaussian states are preserved, at least for small times \cite{beauchard-coron-teismann,beauchard-coron-teismann2} (see also \cite{obstruction-ivan} for different semi-classical obstructions). 
The extension of these results to the logarithmic Schrödinger equation is treated in this article (see Theorem \ref{Thm:negatif}).

\subsubsection{Small-time approximate controllability}

Recently, the first examples of small-time approximately controllable linear equations \eqref{eq:schro} (with $\lambda=0$) were given in \cite{beauchard-pozzoli} by the first and third authors: those systems correspond to 
$M=\R^d$, 
multi-input $m=2$, 
$W_1(x)=|x|^2$
and generic $W_2 \in L^{\infty}(\R^d)$. 
The control on the frequency of the quadratic potential $W_1$ permits to construct solutions that evolve approximately along specific diffeomorphisms, namely, space-dilations. Once we have access to space-dilations, we can exploit the scaling of the equation posed on $\R^d$ (with $u_2=0$) to generate time-contractions. In this way, we built on previous results of large-time control, and obtained small-time control. 

\medskip

In \cite{beauchard-pozzoli2}, we introduced a new method to prove the
small-time global approximate controllability of linear Schrödinger
equations, that does not require discrete spectrum. It applies to
equations \eqref{logNLS_tore} and \eqref{logNLS+} with $\lambda=0$.
The proof strategy will be explained later (see section
\ref{sec:Strategy}) because the purpose of this article is to extend
it to the case of the nonlinear equations \eqref{logNLS_tore} and
\eqref{logNLS+} with $\lambda\neq 0$. The small time asymptotics makes
it easier to handle the nonlinearity.

A key ingredient of this strategy is the small-time approximate
controllability of phases, which refers to the possibility, for any
initial condition $\psi_0 \in \Sph $ and phase $\varphi \in L^2(M,\R)$
to approximately reach, and in arbitrarily small times, the state
$e^{i \varphi}\psi_0 $. This property, introduced in
\cite{duca-nersesyan}, has been proved with a polynomial (instead of
logarithmic) nonlinearity in  \cite{duca-nersesyan} for system
\eqref{logNLS_tore}  and in \cite{duca-pozzoli} for system
\eqref{logNLS+}. The proof relies on Lie bracket techniques and a
saturation argument (introduced in the pioneering articles
\cite{agrachev-sarychev,agrachev2} on the linear control of
Navier-Stokes systems). The small-time controllability of phases is a
consequence of the density in $L^2(M,\R)$ of a particular functional
subspace of ${\rm Lie}\{\Delta-V,W_1,\dots,W_m\}$, shared by both
systems \eqref{logNLS_tore} and \eqref{logNLS+}. For appropriate
systems, it implies the small-time approximate controllability between
particular eigenstates \cite{Boscain-2024,chambrion-pozzoli}. In the
present article, we extend the small-time controllability of phases to
the logarithmic Schrödinger equation \eqref{eq:schro} (see
Theorem~\ref{thm:phase}).

\medskip

Small-time approximate controllability properties (for particular initial and final data) were also recently obtained for different PDEs, using the Agrachev-Sarychev saturation argument:
\begin{itemize}[topsep=0cm, parsep=0cm, itemsep=0cm]
\item Semiclassical Gross-Pitaevskii equations in \cite{coron-xiang-zhang},
\item Linear wave equations in \cite{pozzoli},
\item Polynomial nonlinear heat equations in \cite{duca-pozzoli-urbani}.
\end{itemize}


\subsubsection{Background on the logarithmic Schr\"odinger equation}

In the absence of external potential ($V=W_j=0$ in \eqref{eq:schro}),
the logarithmic Schr\"odinger equation was introduced in the context
of quantum mechanics in \cite{BiMy76}, because this logarithmic
nonlinearity is the only one, in the context of Schr\"odinger
equation, leading to the following tensorization principle: 
\begin{equation*}
  \psi_0(x) =\prod_{j=1}^d \psi_{0j}(x_j)
  \qquad \Rightarrow \qquad
   \psi(t,x) =\prod_{j=1}^d \psi_{j}(t,x_j),
\end{equation*}
where each $\psi_j$ solves a one-dimensional logarithmic Schrodinger equation with initial condition $\psi_{0j}$.
This property remains for potentials that decouple variables i.e. $V(x) = \sum_{j=1}^d V_j(x_j)$, $W_j(x)=W_j(x_j)$.
This tensorization principle would be an obstruction to controllability.
Here, we prevent it with the potentials $\sin \langle b_d , x \rangle$, $\cos \langle b_d , x \rangle$ in \eqref{logNLS_tore} and $e^{-|x|^2/2}$ in \eqref{logNLS+}.

The mathematical analysis of this equation goes back to
\cite{cazenave-haraux}, where the existence and uniqueness of
solutions are established, in the case $\lambda<0$. In particular,
uniqueness relies on a 
remarkable identity recalled in 
Lemma~\ref{lem:CH} (see also \cite{Hayashi-p} for recent improvements). As pointed out in \cite{BiMy79}, in the case
$V=W_j=0$ and $\lambda<0$, \eqref{eq:schro} possesses an explicit
solitary wave, whose profile (in space) is exactly a Gaussian. The
orbital stability of this object was proven in \cite{Caz83} in the radial
setting, and in \cite{Ar16} in the general case. 

The mathematical analysis of \eqref{eq:schro} has recently seen a resurgence of interest. The Cauchy problem was
revisited in \cite{GuLoNi10}, with no restriction on the sign of
$\lambda$, then in \cite{carles-gallagher} with a 
larger space for the initial data  (in particular, 
finite time blow up never occurs). These existence results were 
further refined in \cite{CHO24,HaOz25}, as we will see in
Section~\ref{sec:WP}. The dynamical properties of the logarithmic
Schr\"odinger equation on $\R^d$
turn out to be rather unexpected: a non-standard dispersive rate along
with a universal asymptotic profile (leading to an explicit growth
of Sobolev norms) were proven in the case
$\lambda>0$ in \cite{carles-gallagher}, while in the case $\lambda<0$,
a surprising superposition principle was justified in \cite{FeDCDS}, and the
existence of multisolitons and multibreathers was established in \cite{FeAIHP}. 

After the initial introduction in \cite{BiMy76}, the logarithmic
Schr\"odinger equation has been considered in several 
 physical models, for instance in
quantum mechanics \cite{yasue},
quantum optics \cite{hansson,KEB00,buljan}, nuclear
physics \cite{Hef85}, Bohmian mechanics \cite{DMFGL03}, effective
quantum gravity \cite{Zlo10}, and  Bose-Einstein condensation
\cite{BEC}. The presence of an external potential like in the present
article appears for instance in \cite{Bouharia2015}, in the case where
$V$ is a harmonic potential.
\smallbreak

On a mathematical level, a specificity of the logarithmic
Schr\"odinger equation is that the Cauchy problem needs a special
approach, as the nonlinearity $z\mapsto z\log|z|^2$ is not locally
Lipschitzian. The now 
standard strategy based on a fixed point argument involving Strichartz
estimates (see e.g. \cite{C-2003,TT-2006}) does not seem efficient in
the Cauchy problem \eqref{eq:schro}. On the other hand, if $V$ and the
$W_j$'s are polynomial of degree at most two, Gaussian initial data
lead to Gaussian solutions: if $\psi_0$ is a (complex) Gaussian, then
so is $\psi(t,\cdot)$ for all time $t\in \R$. As mentioned above,
known dynamical properties mark a difference with the linear
Schr\"odinger equation and with the nonlinear Schr\"odinger equation
in the case of power-like nonlinearities. Due to the abundance of
models involving this nonlinearity, controlling the solution
is a physically relevant question, which is interesting on the purely
mathematical level given the specificities of this equation.

\subsection{Main results}

In this article, we focus on logarithmic Schrödinger equations because
they have at least two advantages (typically compared to polynomial nonlinearities):
\begin{itemize}[topsep=0cm, parsep=0cm, itemsep=0cm]
\item They are directly well-posed in $L^2$ in any
space dimensions (on $\R^d$, polynomial nonlinearities $\pm
|\psi|^{2\sigma}\psi$ lead to global well-posedness in $L^2$ provided
that $0<\sigma<2/d$, an exponent which is sharp and obviously depends
on the dimension, see e.g. \cite{C-2003}), 
\item Their solution is globally Lipschitzian with respect to the initial condition $\psi_0\in L^2$, independently of the control.
\end{itemize}
The goal of this article is to prove the following two results.

\begin{theorem}\label{Thm:Main_torus}
Let $d \in \N^*$ and $\lambda \in \R$. If $V \in L^\infty(\T^d,\R)$ then system \eqref{logNLS_tore} is small-time $L^2$-approximately controllable.
\end{theorem}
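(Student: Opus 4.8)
The plan is to follow the two-ingredient scheme announced in Section~\ref{sec:Strategy}: reduce the controllability of \eqref{logNLS_tore} to (i) the small-time controllability of phases and (ii) the generation, in small time, of gradient (transport) flows acting on the modulus of the state, the novelty being that the required approximation must now be justified in the presence of the logarithmic nonlinearity. First I would reduce the statement to a dense class of states. Since the flow $\psi_0\mapsto\psi(T;u,\psi_0)$ is $L^2$-Lipschitz uniformly in $u$ (one of the two structural advantages quoted above, rooted in the Cazenave--Haraux inequality of Lemma~\ref{lem:CH}), it suffices to steer $\psi_0$ to $\psi_1$ approximately for $\psi_0,\psi_1$ in the dense set of states $\sqrt{\rho}\,e^{iS}$ with $\rho\in C^\infty(\T^d,(0,\infty))$, $\int_{\T^d}\rho=1$, and $S\in C^\infty(\T^d,\R)$. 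Running \eqref{logNLS_tore} with the trivial control and using that phase multiplications are approximately invertible, it is then enough to connect any such state to the uniform reference $\psi_\star\equiv(2\pi)^{-d/2}$.

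The first ingredient is Theorem~\ref{thm:phase}: for every $\varphi$ in the dense subspace of $L^2(\T^d,\R)$ produced by the Lie-bracket/saturation analysis, the map $\psi\mapsto e^{i\varphi}\psi$ is realizable approximately, in arbitrarily small time, along \eqref{logNLS_tore}. The second ingredient is produced by WKB analysis. Fix $\phi\in C^\infty(\T^d,\R)$, a small parameter $\eps>0$, and a short physical time $t=\eps s$. I would impose the large phase $e^{i\phi/\eps}$ (by the first ingredient), then let the system evolve with the control off during the interval of length $\eps s$, and finally remove the resulting phase. Writing the solution as $\psi(\eps s,\cdot)=a_\eps(s,\cdot)\,e^{i\Phi(s,\cdot)/\eps}$ and collecting powers of $\eps$ in \eqref{logNLS_tore}, the $\eps^{-2}$ balance yields the eikonal equation $\partial_s\Phi+\tfrac12|\nabla\Phi|^2=0$ with $\Phi(0,\cdot)=\phi$, while the $\eps^{-1}$ balance yields the transport equation $\partial_s a_\eps+\nabla\Phi\cdot\nabla a_\eps+\tfrac12(\Delta\Phi)a_\eps=0$; equivalently $\rho=|a_\eps|^2$ obeys the continuity equation $\partial_s\rho+\nabla\cdot(\rho\,\nabla\Phi)=0$. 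The decisive point is that the terms $V$ and $\lambda\log|a_\eps|^2$, being real and of order $\eps^0$, do not enter the transport of $\rho$: they only perturb the phase at order $O(\eps)$ over the window $t\in[0,\eps s]$. Hence, to leading order, this three-step procedure transports the modulus of the state along the gradient field $\nabla\phi$ for flow-time $s$, exactly as in the linear case.

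With the transport flows at hand, transitivity on densities follows from a Moser (or Dacorogna--Moser) argument: any two smooth positive probability densities $\rho_0,\rho_1$ on $\T^d$ are joined by the continuity-equation flow of a time-dependent gradient field $\nabla\phi_t$, which I would approximate by composing finitely many of the elementary transport steps above, the gradients of the admissible phases being dense enough to realize the required velocity fields. Composing, one steers $\sqrt{\rho_0}\,e^{iS_0}\to\sqrt{\rho_0}\to\sqrt{\rho_1}\to\sqrt{\rho_1}\,e^{iS_1}$, the outer steps adjusting the phase through Theorem~\ref{thm:phase} and the middle step transporting the modulus, all within total time $<\varepsilon$ and total $L^2$-error $<\varepsilon$; this would prove Theorem~\ref{Thm:Main_torus}.

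The main obstacle is the rigorous control of the WKB remainder in the nonlinear regime. Unlike the linear case, the error equation inherits the genuinely non-Lipschitz term $z\mapsto z\log|z|^2$, so the usual Gronwall/Strichartz closure is unavailable; the estimate must instead exploit the log-Lipschitz structure together with the Cazenave--Haraux inequality (Lemma~\ref{lem:CH}) to show that the $L^2$-distance between the true solution and the transport profile $a_\eps\,e^{i\Phi/\eps}$ tends to $0$ as $\eps\to0$, uniformly on $[0,\eps s]$. This requires keeping $a_\eps$ bounded and bounded away from $0$ (so that $\log|a_\eps|^2$ stays smooth) and choosing $s$ small enough that the gradient flow of $\phi$ remains a diffeomorphism and no caustic forms. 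These are precisely the points where, as emphasized in the abstract, the small-time asymptotics make the nonlinearity tractable.
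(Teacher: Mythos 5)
Your outline reproduces the paper's two hard ingredients exactly: the small-time control of phases via a saturation argument (Theorem~\ref{thm:phase}, built from Propositions~\ref{Prop2}, \ref{Prop3_tore} and \ref{Prop:DN}), and the WKB/eikonal decomposition $\psi(\tau s)=a(s)e^{i\phi(s)/\tau}$ used to realize gradient flows in small time (Propositions~\ref{Prop:a}, \ref{Prop:Trotter-Kato} and \ref{Prop4}), with the nonlinearity absorbed through the Cazenave--Haraux inequality of Lemma~\ref{lem:CH}. Where you diverge is the final assembly: the paper stops once it has the maps $e^{i\varphi}$ and $e^{-\mathcal{T}_{\nabla\varphi}}$ (Proposition~\ref{Prop:STCflows_tore}) and invokes the criterion of \cite{beauchard-pozzoli2} recalled as Theorems~\ref{thm:strategy} and \ref{thm:gradient-algebra}, whereas you re-derive the conclusion by a Moser-type transport of densities. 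That route is viable, but one point needs to be made precise: Moser's standard velocity field $w/\rho_t$ with $\mathrm{div}\,w=\rho_1-\rho_0$ is not a gradient, and gradients are not dense among vector fields, so ``the gradients of the admissible phases being dense enough to realize the required velocity fields'' is not a proof. What saves you on $\T^d$ is that one can instead solve the uniformly elliptic equation $\mathrm{div}(\rho_t\nabla\phi_t)=\rho_0-\rho_1$ along the segment $\rho_t=(1-t)\rho_0+t\rho_1$, so the connecting field can indeed be chosen as a time-dependent gradient; you should say this (or else pass through Lie brackets of gradients, which is precisely what Theorem~\ref{thm:gradient-algebra} packages). Two smaller corrections: (i) you do not need $a$ bounded away from $0$ --- the reference profile only requires $|a|\log|a|^2$ bounded, which holds since $\sigma\mapsto\sigma\log\sigma^2$ is continuous at $0$, and the true solution is handled by Lemma~\ref{lem:CH} alone; (ii) realizing the flow of the \emph{autonomous} field $\nabla\varphi$ for a fixed flow-time requires iterating $n$ short WKB steps and compensating the accumulated eikonal phase drift $-\tfrac{1}{2n\tau}|\nabla\varphi|^2$ per step (which is large after division by $\tau$), i.e.\ the double limit $n\to\infty$ then $\tau\to 0$ of the Trotter--Kato product formula in Proposition~\ref{Prop:Trotter-Kato}; your single-parameter $\eps\to0$ phrasing hides this.
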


\begin{theorem}\label{Thm:Main}
Let $d \in \N^*$ and $\lambda \in \R$. If $V$ satisfies \eqref{Hyp:V_transp} then system \eqref{logNLS+} is
small-time $L^2$-approximately controllable. 
\end{theorem}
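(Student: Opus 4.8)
The plan is to transpose to the nonlinear equation \eqref{logNLS+} the two-pillar strategy of \cite{beauchard-pozzoli2}, reducing the global $L^2$-approximate controllability in small time to the combination of the \emph{small-time controllability of phases} (already available in the present nonlinear setting as Theorem~\ref{thm:phase}) and of a \emph{small-time controllability of gradient flows}, which I would establish here. Given $\psi_0,\psi_1\in\Sph$ and $\eps>0$, and after approximating both in $L^2$ by smooth states with nowhere-vanishing modulus (licit by the continuity of the flow in $\psi_0$), I would proceed in two stages. First, viewing $|\psi_0|^2$ and $|\psi_1|^2$ as probability densities on $\R^d$, I would transport the former onto the latter through a finite composition of gradient flows $x\mapsto x+s\nabla\varphi(x)$, associated to smooth phases $\varphi$ and small displacements $s$ (an optimal-transport/Moser type factorization). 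Second, once the modulus matches $|\psi_1|$, I would correct the remaining argument by a phase multiplication $e^{i\chi(x)}$, which Theorem~\ref{thm:phase} realizes approximately in arbitrarily small time. Since each elementary motion is performed in arbitrarily small time, so is the whole concatenation.

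The heart of the matter is thus to generate a single gradient-flow transport in small time. The mechanism is semiclassical: for a small parameter $\eps>0$ I would first use the phase controllability to bring $\psi_0$ close to the rapidly oscillating state $\psi_0\,e^{i\varphi/\eps}$, and then let the drift $\tfrac12\Delta-V$ act during a physical time of order $\eps$. Writing $t=\eps\tau$ and inserting the WKB ansatz $\psi=a\,e^{i\phi/\eps}$ with $\phi|_{\tau=0}=\varphi$ into \eqref{logNLS+}, the two leading orders in $\eps$ give the eikonal equation $\partial_\tau\phi+\tfrac12|\nabla\phi|^2=0$ and the amplitude transport equation $\partial_\tau a+\nabla\phi\cdot\nabla a+\tfrac12 a\,\Delta\phi=0$. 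Its characteristics are the straight lines $x\mapsto x+\tau\nabla\varphi(x)$, so that at rescaled time $\tau=s$, that is at the small physical time $\eps s$, the density $|\psi_0|^2$ is pushed forward by $x\mapsto x+s\nabla\varphi(x)$. This is the announced gradient-flow transport, realized in time $\eps s\to0$, up to a phase corrected in the second stage.

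The \emph{main obstacle}, and the reason the estimates are harder here than in the linear case of \cite{beauchard-pozzoli2}, is to justify this WKB approximation \emph{uniformly} despite the non-locally-Lipschitz term $\lambda\psi\log|\psi|^2$. The decisive simplification is that $\log|\psi|^2=\log|a|^2$ is real and of size $O(1)$: it does not enter the eikonal equation, and over the short window it contributes a phase of size $O(\eps)$ only, so that, like the potential $V$, it is a lower-order perturbation of the free transport, appearing at the next order of the hierarchy. Concretely, I would build the approximate solution by solving the eikonal/transport system (now carrying the bounded real potentials $V$ and $\lambda\log|a|^2$ at that order), and then estimate the remainder $r=\psi-a\,e^{i\phi/\eps}$. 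Here I expect to rely precisely on the two structural features stressed before Theorem~\ref{Thm:Main}: the global $L^2$ well-posedness of \eqref{logNLS+}, and the global Lipschitz dependence of its flow on the initial datum, uniformly in the control, which stems from the Cazenave--Haraux identity of Lemma~\ref{lem:CH}. This Lipschitz stability is what converts the $o(1)$ consistency error of the ansatz into an $o(1)$ error on $\psi$ itself, uniformly over the transport window, thereby bypassing the failure of the usual contraction/Strichartz scheme for the logarithmic nonlinearity.

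Finally, I would assemble the two stages and check Definition~\ref{def:STAC}. The smooth phases $\varphi$ generating the gradient flows, and the correcting phase $\chi$, all belong to the dense class of reachable phases provided by Theorem~\ref{thm:phase}, whose proof exploits the full control set: the linear potentials $x_1,\dots,x_d$ and, crucially, the Gaussian $e^{-|x|^2/2}$, which breaks the tensorization principle obstructing controllability. Concatenating the modulus transport with the phase correction, taking $\eps$ and the displacements small and using the continuity of $\psi_0\mapsto\psi(\cdot;u,\psi_0)$ on $\Sph$, produces a time $T\le\eps$, a global phase $\theta$, and a control $u\in PWC([0,T],\R^{d+1})$ such that $\|\psi(T;u,\psi_0)-e^{i\theta}\psi_1\|_{L^2}<\eps$, which is exactly Theorem~\ref{Thm:Main}.
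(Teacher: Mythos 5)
Your proposal follows essentially the same route as the paper: the proof of Theorem~\ref{Thm:Main} given there is exactly the combination of the STC of phases (Theorem~\ref{thm:phase}), the STC of flows of gradient vector fields (Section~\ref{sec:gradient-flows}), and the abstract criterion of Theorems~\ref{thm:strategy}--\ref{thm:gradient-algebra} from \cite{beauchard-pozzoli2}; your ``transport the modulus by gradient flows, then correct the phase'' assembly is precisely the content of that cited black box, and your key stability tool (the Cazenave--Haraux inequality of Lemma~\ref{lem:CH} giving control-uniform $L^2$-Lipschitz dependence) is the one the paper uses.

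The one place where your sketch is genuinely imprecise is the extraction of the gradient flows from the WKB window. A single window (imprint $e^{i\varphi/\eps}$, evolve freely for time $\eps s$) produces, in the limit $\eps\to 0$, the unitary transport along the straight-line map $x\mapsto x+s\nabla\varphi(x)$ --- the characteristics of the eikonal phase --- and \emph{not} the flow $\Phi_{\nabla\varphi}^s$; these agree only to order $s^2$, while Theorems~\ref{thm:strategy} and \ref{thm:gradient-algebra} require the genuine flows $\mathcal{L}_{\Phi_f^t}$. The paper bridges this gap with an iterated Trotter--Kato product (Proposition~\ref{Prop:Trotter-Kato}): the phase is re-imprinted $n$ times inside one window, each step carrying the correction $e^{i|\nabla\varphi|^2/(2n\tau)}$, and the convergence of $(\frak{B}_n)^n$ rests on a quantitative $O(1/n^2)$ per-step error measured in $\Sigma$ together with the uniform bound \eqref{estim_a_Sigma}, propagated by the Lipschitz estimate (Lady Windermere's fan); only then does $\tau\to0$ yield $e^{-\mathcal{T}_{\nabla\varphi}}$ (Proposition~\ref{Prop4}). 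Your ``finite composition of gradient flows with small displacements'' could be repaired in the reverse order of limits (first $\eps\to0$ on each short window, then compose $n$ Euler steps and use the closure under composition and strong limits of Lemma~\ref{lem:reachable-operators}), but as written the passage from the straight-line push-forwards to the actual flows, which is the technical heart of Section~\ref{sec:gradient-flows}, is missing.
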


To motivate the use of the potential $e^{-|x|^2/2}$ in \eqref{logNLS+}, we prove negative results for the following systems
\begin{equation}\label{logNLS_fin}
\left( i\partial_t+ \frac{1}{2} \Delta - V \right)\psi
=\lambda \psi \log|\psi|^2
+\left( \sum\limits_{j=1}^d u_{j}(t)x_j
\right)\psi, \quad(t,x) \in (0,T) \times \R^d, 
\end{equation}
\begin{equation}\label{logNLS_x2}
 \left( i\partial_t+ \frac{1}{2} \Delta - V\right)\psi
=\lambda \psi \log|\psi|^2
+\left( u_0(t) \frac{|x|^2}{2} + \sum\limits_{j=1}^d u_{j}(t)x_j
\right)\psi, \quad(t,x) \in (0,T) \times \R^d.
\end{equation}

\begin{theorem} \label{Thm:negatif}
\begin{enumerate}
    \item Let $d \in \N^*$ and $\lambda \in \R$. If $V(x)=\alpha |x|^2+ \beta \cdot x  + \gamma$ with $\alpha,\gamma \in \R$, $\beta \in \R^d$, then system \eqref{logNLS_x2} is not (large-time) $L^2$-approximately controllable.
    \item Let $d \in \N^*$ and $\lambda \in \R$. If $V$ satisfies \eqref{Hyp:V_transp}, then system \eqref{logNLS_fin} is not small-time $L^2$-approximately controllable. 
\end{enumerate}
\end{theorem}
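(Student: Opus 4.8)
The plan is to exploit the obstruction that \emph{Gaussian states are preserved} by the flow, transposing the linear mechanism of \cite{beauchard-coron-teismann,beauchard-coron-teismann2} to the logarithmic setting. The observation specific to the logarithmic nonlinearity is that for a complex Gaussian $\psi(x)=\exp(-\tfrac12 x^{\!\top}\!Ax+b\cdot x+c)$ with $\RE A>0$, one has $\log|\psi|^{2}=-(x-q)^{\!\top}(\RE A)(x-q)+\text{const}$, $q=(\RE A)^{-1}\RE b$, a real quadratic polynomial in $x$; hence $\lambda\psi\log|\psi|^{2}$ acts on a Gaussian as an (at most quadratic, self-consistent) potential, whose gradient vanishes at the center $q$. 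Consequently the set $\mathcal G\subset\Sph$ of normalized Gaussians is invariant under any flow generated by an at most quadratic Hamiltonian. As $\mathcal G$ is a finite-dimensional submanifold of the infinite-dimensional $\Sph$, it is not dense, and it is invariant under global phases (these only shift $\IM c$). I would therefore fix once and for all a Gaussian $\psi_0\in\mathcal G$ and a target $\psi_1\in\Sph$ with $\varepsilon_0:=\operatorname{dist}_{L^2}(\psi_1,\mathcal G)>0$; by phase-invariance, $\operatorname{dist}_{L^2}(e^{i\theta}\psi_1,\mathcal G)=\varepsilon_0$ for all $\theta$. It then suffices to confine the reachable set from $\psi_0$ to (a small neighborhood of) $\mathcal G$.

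For part (1), I would first prove that when $V(x)=\alpha|x|^2+\beta\cdot x+\gamma$, every term of \eqref{logNLS_x2}---the drift $V$, the quadratic control $u_0|x|^2/2$, the linear controls $u_jx_j$, and the effective potential $\lambda\log|\psi|^2$ produced by the nonlinearity on a Gaussian---is at most quadratic in $x$. Inserting the ansatz $\psi(t,x)=\exp(-\tfrac12 x^{\!\top}\!A(t)x+b(t)\cdot x+c(t))$ and matching powers of $x$ yields a closed system: a matrix Riccati equation for $A$, linear equations for $b$ and $c$, the nonlinearity feeding back only through $\RE A,\RE b,\RE c$. Because $\|\psi\|_{L^2}$ is conserved and the Cauchy problem is globally well-posed on $\Sph$ with no finite-time blow-up, this Gaussian solution is global and, by uniqueness for \eqref{logNLS_x2}, coincides with $\psi(\cdot;u,\psi_0)$; thus the reachable set lies in $\mathcal G$ for every time $T$ and every control, so $\psi_1$ cannot be approximated within $\varepsilon_0$. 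This is the failure of \emph{large-time} approximate controllability.

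For part (2), $V$ is merely subquadratic (\eqref{Hyp:V_transp}), so its cubic and higher part destroys the exact invariance of $\mathcal G$, and I would instead prove a quantitative small-time estimate. Let $q(t)$ be the classical trajectory $\dot q=p,\ \dot p=-\nabla V(q)-u(t)$ (the log-nonlinearity does not affect it, its quadratic contribution being centered at $q$), let $V_2(t,\cdot)$ be the second-order Taylor polynomial of $V$ at $q(t)$, and let $\psi_G(t)\in\mathcal G$ be the Gaussian wave packet solving \eqref{logNLS_fin} with $V$ replaced by $V_2(t,\cdot)$ (a standard Hagedorn-type construction; the linear controls and the log-nonlinearity preserve the Gaussian form). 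The difference $w:=\psi-\psi_G$ solves a Schrödinger equation with vanishing initial data and source $(V-V_2(t,\cdot))\psi_G$; since $|V-V_2(t,x)|\le C|x-q(t)|^3$ with $C$ bounding the third derivatives of $V$, and $\psi_G$ is centered at $q(t)$, one gets $\|(V-V_2)\psi_G\|_{L^2}\le C'\sigma(t)^3$, where $\sigma(t)$ is the width of $\psi_G$. Testing the equation for $w$ against $w$, the real-potential and Laplacian contributions to $\tfrac{d}{dt}\|w\|_{L^2}^2$ vanish, the nonlinear difference is absorbed via the Cazenave--Haraux estimate (Lemma~\ref{lem:CH}), and Gronwall gives $\|w(t)\|_{L^2}\le C''t$. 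Choosing $T\le\varepsilon$ with $C''T<\varepsilon_0$ yields $\operatorname{dist}_{L^2}(\psi(T;u,\psi_0),\mathcal G)\le\|w(T)\|_{L^2}<\varepsilon_0$, so $\psi_1$ is unreachable in small time.

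The main obstacle is making the estimate of part (2) \emph{uniform in the control}, since only the total time $T$---not the size of the piecewise-constant controls---is small. Large controls can drive the center $q(t)$ far away and make $|\nabla V(q(t))|$ large, but this is harmless: the bound $\|(V-V_2)\psi_G\|_{L^2}\le C'\sigma^3$ depends only on the third derivatives of $V$ (bounded by \eqref{Hyp:V_transp}) and on the width $\sigma$, while $\sigma$ obeys a Riccati equation whose coefficients involve only the \emph{bounded} Hessian of $V$ and the self-consistent $\RE A$ from the nonlinearity---crucially independent of the linear controls, which act only on $q$ and on the phase. The delicate points are therefore: (i) showing that $\sigma(t)$, equivalently $\RE A(t)>0$, stays bounded on $[0,T]$ uniformly in $u$ despite the nonlinear feedback in the width equation, using global well-posedness on $\Sph$; and (ii) justifying the time-dependent quadratic comparison around the moving center, which is where the WKB/wave-packet analysis underlying the rest of the paper enters.
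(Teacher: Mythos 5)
Your overall strategy coincides with the paper's: for part (1), the Gaussian ansatz closes into a Riccati system so that the set $\mathcal G$ of normalized Gaussians is invariant and the reachable set from a Gaussian is confined to the non-dense closed set $\mathcal G$; for part (2), a coherent-state decomposition $\psi(t,x)=\xi(t,x-q(t))e^{i(p(t)\cdot x+\theta(t))}$ along the control-driven classical trajectory, an energy estimate combining the Cazenave--Haraux inequality with Gr\"onwall, and the conclusion that $\operatorname{dist}_{L^2}(\psi(t),\mathcal G)\le Ct$ uniformly in $u$.

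The one substantive difference is in part (2), and it is exactly the point you flag as delicate. You Taylor-expand $V$ to \emph{second} order around $q(t)$, so your reference packet $\psi_G$ solves a log-NLS with the time-dependent quadratic potential $\tfrac12 (x-q)^{\top}D^2V(q(t))(x-q)$; its width then obeys a Riccati equation whose coefficient $D^2V(q(t))$ depends on the control through $q$, and you must prove that $\RE A(t)$ stays bounded away from $0$ and $\infty$ on $[0,T]$ uniformly in $u$ --- a real (if fillable) extra step, requiring uniform non-blow-up of the Riccati flow with merely bounded coefficients. The paper avoids this entirely by expanding $V$ only to \emph{first} order: the residual $W(t,y)=V(y+q(t))-V(q(t))-\nabla V(q(t))\cdot y$ is already bounded by $\|D^2V\|_{L^\infty}|y|^2$, so the comparison profile $\Gamma$ in the moving frame solves the \emph{free} logarithmic NLS, is literally independent of $u$, and the source term $\|W(t)\Gamma(t)\|_{L^2}\le \|D^2V\|_{L^\infty}\,\||y|^2\Gamma(t)\|_{L^2}$ is uniformly bounded with no further argument. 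You lose nothing by this simplification (both routes give an $O(t)$ bound, which is all that is needed), so I would recommend adopting the first-order expansion; otherwise you must actually supply the uniform Riccati bound at your step (i), which your proposal currently only asserts.
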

 The large-time approximate controllability of system
 \eqref{logNLS_fin}, when $V$ is not quadratic, is an open
 question. It is known to hold for its linear version (i.e. for
 $\lambda=0$), generically with respect to $V$ \cite{MS-generic}.

\subsection{Proof strategy} \label{sec:Strategy}

The strategy to prove Theorems~\ref{Thm:Main_torus}  and
\ref{Thm:Main} consists in adapting to the nonlinear PDEs
\eqref{logNLS_tore} and \eqref{logNLS+} the strategy used in the
linear case (i.e. $\lambda=0$) in \cite{beauchard-pozzoli2} and
recalled below. Technically, this adaptation is achieved by replacing
algebraic manipulations of semi-groups with estimates inspired by WKB analysis.

\bigskip

We use \textbf{small-time $L^2$-approximately reachable maps} to
describe  states that can be achieved by trajectories of
\eqref{logNLS+} in arbitrarily small time. The set of $L^2$-STAR maps
  forms a subsemigroup closed for the topology of the strong
  convergence (Lemma~\ref{lem:reachable-operators} below). 

\begin{definition}[$L^2$-STAR maps] \label{def:L2STAR}
A map $L: \Sph  \to \Sph $ is $L^2$-STAR  if
for every $\psi_0 \in \Sph $ and $\varepsilon>0$, there exist $T \in
[0,\varepsilon]$, $\theta \in [0,2\pi)$ and $u \in PWC(0,T)$ such that
$\| \psi(T;u,\psi_0) - e^{i\theta} L (\psi_0) \|_{L^2} < \varepsilon$. 
\end{definition}
\begin{lemma}\label{lem:reachable-operators}
The composition and the strong limit of $L^2$-STAR maps are $L^2$-STAR
maps. 
\end{lemma}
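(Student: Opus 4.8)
The plan is to treat the two assertions separately, both resting on three structural properties of the flow supplied by the well-posedness theory. First, the \emph{flow property}: concatenating $u_1\in PWC(0,T_1)$ and $u_2\in PWC(0,T_2)$ into a single $u\in PWC(0,T_1+T_2)$ yields $\psi(T_1+T_2;u,\psi_0)=\psi(T_2;u_2,\psi(T_1;u_1,\psi_0))$. Second, the \emph{phase invariance}: since $|e^{i\theta}\psi|^2=|\psi|^2$, the nonlinearity $\psi\log|\psi|^2$ commutes with multiplication by a constant unimodular factor, so $\psi(t;u,e^{i\theta}\psi_0)=e^{i\theta}\psi(t;u,\psi_0)$. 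Third, the \emph{uniform Lipschitz estimate}: there is a nondecreasing function $t\mapsto C(t)$, independent of the control $u$, with $\|\psi(t;u,\phi)-\psi(t;u,\tilde\phi)\|_{L^2}\le C(t)\|\phi-\tilde\phi\|_{L^2}$ for all $\phi,\tilde\phi\in\Sph$; this is the global Lipschitz dependence on the datum highlighted in the introduction, coming from the Cazenave--Haraux identity of Lemma~\ref{lem:CH}.

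For the strong limit, let $L^2$-STAR maps $L_n$ converge strongly to $L$ and fix $\psi_0\in\Sph$, $\varepsilon>0$. (Note that $L(\psi_0)\in\Sph$, since $\|L(\psi_0)\|_{L^2}=\lim_n\|L_n(\psi_0)\|_{L^2}=1$.) I would pick $n$ with $\|L_n(\psi_0)-L(\psi_0)\|_{L^2}<\varepsilon/2$, then invoke the $L^2$-STAR property of $L_n$ to get $T\in[0,\varepsilon]$, $\theta$ and $u\in PWC(0,T)$ with $\|\psi(T;u,\psi_0)-e^{i\theta}L_n(\psi_0)\|_{L^2}<\varepsilon/2$. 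Since multiplication by $e^{i\theta}$ is an $L^2$-isometry, the triangle inequality gives $\|\psi(T;u,\psi_0)-e^{i\theta}L(\psi_0)\|_{L^2}<\varepsilon$. This part is purely formal.

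For the composition, let $L_1,L_2$ be $L^2$-STAR, set $L:=L_2\circ L_1$, and fix $\psi_0$, $\varepsilon$. I would put $\delta:=\varepsilon/(2C(\varepsilon))$, use the $L^2$-STAR property of $L_1$ with parameter $\min(\varepsilon/2,\delta)$ to obtain $T_1\in[0,\varepsilon/2]$, $\theta_1$, $u_1$ with $\|\psi(T_1;u_1,\psi_0)-e^{i\theta_1}L_1(\psi_0)\|_{L^2}<\delta$, and the $L^2$-STAR property of $L_2$ at the datum $L_1(\psi_0)$ with parameter $\varepsilon/2$ to obtain $T_2\in[0,\varepsilon/2]$, $\theta_2$, $u_2$ with $\|\psi(T_2;u_2,L_1(\psi_0))-e^{i\theta_2}L_2(L_1(\psi_0))\|_{L^2}<\varepsilon/2$. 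Concatenation gives $T:=T_1+T_2\le\varepsilon$ and $\psi(T;u,\psi_0)=\psi(T_2;u_2,\psi(T_1;u_1,\psi_0))$. I then insert the intermediate state $\psi(T_2;u_2,e^{i\theta_1}L_1(\psi_0))$: the first difference is bounded, by the uniform Lipschitz estimate and the monotonicity of $C$, by $C(T_2)\delta\le C(\varepsilon)\delta=\varepsilon/2$, while phase invariance and the isometry of $e^{i\theta_1}$ identify the second difference with $\|\psi(T_2;u_2,L_1(\psi_0))-e^{i\theta_2}L_2(L_1(\psi_0))\|_{L^2}<\varepsilon/2$. With the global phase $\theta:=\theta_1+\theta_2\bmod 2\pi$, the triangle inequality yields total error $<\varepsilon$, so $L$ is $L^2$-STAR.

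The genuinely substantive ingredient, and the step I expect to be the main obstacle, is the uniform Lipschitz estimate: the constant $C(t)$ must not depend on $u$, because in the composition argument the initial error $\delta$ is transported by the control-dependent flow $\psi(T_2;u_2,\cdot)$. This is exactly where the logarithmic nonlinearity is used: the drift and bilinear terms are skew-adjoint and disappear from the $L^2$ energy estimate for the difference of two solutions, while $|\IM((z_1\log|z_1|^2-z_2\log|z_2|^2)(\bar z_1-\bar z_2))|\lesssim|z_1-z_2|^2$ for the nonlinearity yields, via Gronwall, $C(t)=e^{c|\lambda|t}$ independently of $u$. Everything else reduces to concatenation of controls and the triangle inequality.
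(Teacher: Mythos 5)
Your proof is correct and follows essentially the same route as the paper: concatenation of controls, phase equivariance of the flow, the $L^2$ Lipschitz/continuity of $\psi(T_2;u_2,\cdot)$ to transport the first error, and the triangle inequality; the strong-limit part is identical. The only (immaterial) difference is quantifier order: you fix $\delta=\varepsilon/(2C(\varepsilon))$ in advance using the control-uniform Lipschitz bound \eqref{Lipschitz}, whereas the paper first chooses $u_2$ and then extracts $\delta$ from the continuity of that single fixed map --- so, contrary to your closing remark, uniformity of the constant in $u$ is not actually needed here, though it is available and your use of it is valid.
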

This lemma is proved in Appendix~\ref{app}. 
\begin{definition}[Vector fields and flows $\Phi_f^s$]\label{def:flow}
$\Vect(M)$ (resp. $\Vect_c(M)$) denotes the space of 
globally Lipschitzian (resp. compactly supported)
smooth vector fields on $M$. 
For $f \in \Vect(M)$, $\Phi_f^s$
denotes the flow associated with $f$ at time $s$:
for every $x_0 \in M$, $x(s)= 
\Phi_f^s(x_0)$ is the solution of
the ODE $\dot{x}(s)=f(x(s))$ associated with the initial condition $x(0)=x_0$.
\end{definition}

\begin{definition} 
For $f \in \Vect(M)$, $s\in\R$ and $P:=\Phi_f^s$, the unitary operator on $L^2(M,\C)$ associated with $P$ is defined by
\begin{equation} \label{Def:LP}
\mathcal{L}_{P}\psi =  J_P^{1/2}(\psi\circ P),
\end{equation}
where $J_P := \text{det}(DP)$ is the determinant of the Jacobian matrix $DP$ of $P$. Then $\|\mathcal{L}_{P}\psi\|_{L^2}=\|\psi\|_{L^2}.$
\end{definition}

\begin{definition}
We introduce the following small-time controllability (STC) notions,
where $\frak{G} \subset \Vect(M)$:
\begin{itemize}
\item \textbf{STC of phases}: for every $\varphi \in L^2(M,\R)$, the
  map $\psi\mapsto e^{i\varphi}\psi$ is $L^2$-STAR,
\item \textbf{STC of flows of vector fields in $\frak{G}$}: 
for every $f \in \frak{G}$ and $t \in \R$, the map $\mathcal{L}_{\Phi_f^t}$ is $L^2$-STAR.
\end{itemize}
\end{definition}

Our strategy to prove Theorems~\ref{Thm:Main_torus} and \ref{Thm:Main}
consists in applying the following criterion, proved in \cite[Theorems
17 and 18]{beauchard-pozzoli2}, independently of the dynamics. Here
$\text{Lie}(\frak{G})$ denotes the Lie algebra generated by the vector
fields in $\frak{G}$; it is a Lie subalgebra of the Lie algebra of
smooth vector fields on $M$.

\begin{theorem} \label{thm:strategy}
Let $\frak{G} \subset \Vect(M)$ be such that
\begin{equation} \label{propriete_de_G}
\forall f \in \Vect_c(M), \quad
\exists (f_n)_{n\in\N} \subset \text{Lie}(\frak{G}) \cap \Vect(M)
 \text{ such that } 
\mathcal{L}_{\Phi_f^1} 
\text{ is the strong limit of } (\mathcal{L}_{\Phi_{f_n}^1})_{n\in\N}.
\end{equation}
Then the STC of phases and flows of vector fields in $\frak{G}$ implies the $L^2$-STAC.    
\end{theorem}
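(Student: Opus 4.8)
The plan is to show that the set of $L^2$-STAR maps acts approximately transitively on $\Sph$ modulo global phases: given $\psi_0,\psi_1\in\Sph$ and $\varepsilon>0$, I would construct an $L^2$-STAR map $L$ and a phase $\theta$ with $\|L(\psi_0)-e^{i\theta}\psi_1\|_{L^2}<\varepsilon$, which is exactly the $L^2$-STAC conclusion. The two available ingredients are the STC of phases and of $\frak{G}$-flows, together with Lemma~\ref{lem:reachable-operators}: the $L^2$-STAR maps form a semigroup (closed under composition) that is also closed under strong limits. It therefore suffices to realize $L$ as a strong limit of finite compositions of phase multiplications $\psi\mapsto e^{i\varphi}\psi$ and flow operators $\mathcal{L}_{\Phi_f^t}$ with $f\in\frak{G}$.

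First I would upgrade the STC of flows from the generators $\frak{G}$ to the whole Lie algebra. If $\mathcal{L}_{\Phi_f^t}$ and $\mathcal{L}_{\Phi_g^t}$ are $L^2$-STAR for every $t$, then so is $\mathcal{L}_{\Phi_{[f,g]}^t}$ whenever $[f,g]\in\Vect(M)$: one invokes the classical commutator formula for flows, for $t\ge 0$,
\[
\Phi_{[f,g]}^{t}=\lim_{n\to\infty}\left(\Phi_{g}^{-\sqrt{t/n}}\circ\Phi_{f}^{-\sqrt{t/n}}\circ\Phi_{g}^{\sqrt{t/n}}\circ\Phi_{f}^{\sqrt{t/n}}\right)^{n},
\]
and its analogue for $t<0$. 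Under $\mathcal{L}$ the right-hand side is a finite composition of operators $\mathcal{L}_{\Phi_{\pm f}^s},\mathcal{L}_{\Phi_{\pm g}^s}$, hence $L^2$-STAR by the semigroup property; since the flows converge in $C^1_{\mathrm{loc}}$ and the $\mathcal{L}_{P}$ are unitary and converge on the dense set of smooth compactly supported functions, the associated operators converge strongly, so strong-limit closedness yields that $\mathcal{L}_{\Phi_{[f,g]}^t}$ is $L^2$-STAR. Iterating on bracket length, $\mathcal{L}_{\Phi_h^t}$ is $L^2$-STAR for every $h\in\mathrm{Lie}(\frak{G})\cap\Vect(M)$ and every $t$. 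Feeding this into hypothesis~\eqref{propriete_de_G} and using strong-limit closedness once more, I obtain that $\mathcal{L}_{\Phi_f^1}$ is $L^2$-STAR for every $f\in\Vect_c(M)$; by rescaling $f\mapsto tf$ and composing short-time flows (time-splitting of a time-dependent field) it follows that $\mathcal{L}_P$ is $L^2$-STAR for every diffeomorphism $P$ isotopic to the identity through a compactly supported isotopy.

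It remains to reach an arbitrary target. By density of smooth functions and the global Lipschitz dependence of the solution on $\psi_0$ (uniform in the control), I may assume $\psi_0,\psi_1$ are smooth with $|\psi_j|^2=\rho_j$ a smooth probability density, positive on a large ball and supported in a slightly larger one, and write $\psi_j=\sqrt{\rho_j}\,e^{i\varphi_j}$ with $\varphi_j\in L^2(M,\R)$. I then compose three $L^2$-STAR maps: the phase $\psi\mapsto e^{-i\varphi_0}\psi$ sends $\psi_0$ to $\sqrt{\rho_0}$; a transport diffeomorphism $P$, realized as the flow of a compactly supported field and satisfying $J_P\,(\rho_0\circ P)=\rho_1$ (Moser/Dacorogna--Moser), gives $\mathcal{L}_P\sqrt{\rho_0}=\sqrt{\rho_1}$; and the phase $\psi\mapsto e^{i\varphi_1}\psi$ sends $\sqrt{\rho_1}$ to $\psi_1$. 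The composite is $L^2$-STAR and maps $\psi_0$ to $\psi_1$ within the admissible approximation, closing the argument.

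The main obstacle I anticipate is the reaching step, specifically reconciling the geometric transport with the $L^2$ polar decomposition: realizing the density matching $\rho_0\rightsquigarrow\rho_1$ as the flow of a genuinely compactly supported vector field forces care where the densities vanish (they must vanish at the boundary of their support to be smooth and compactly supported), while one must simultaneously ensure the phases $\varphi_j$ are honest $L^2(M,\R)$ functions despite the zeros of $\psi_j$; this is a constructive, approximation-heavy point. A secondary but genuinely analytic difficulty is controlling the strong convergence in the commutator formula tightly enough to stay within the class of $L^2$-STAR maps, where completeness of the flows (guaranteed by working inside $\Vect(M)$) and the unitarity of the $\mathcal{L}_P$ are exactly what legitimize the passage to the limit.
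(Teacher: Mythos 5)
Your outline follows the same route as the proof this paper relies on (the theorem is not reproved here but imported from \cite{beauchard-pozzoli2}, Theorems 17 and 18): closure of the $L^2$-STAR maps under composition and strong limits, propagation from $\frak{G}$ to $\mathrm{Lie}(\frak{G})\cap\Vect(M)$ via commutator/Trotter limits of flows, the hypothesis \eqref{propriete_de_G} to reach flows of all compactly supported fields, and the polar-decomposition-plus-Moser-transport step to hit an arbitrary target. The two difficulties you flag --- realizing the density transport $\rho_0\rightsquigarrow\rho_1$ by flows of compactly supported fields where the densities vanish, and justifying the strong convergence of the $\mathcal{L}_P$'s in the commutator formula --- are exactly the technical points settled in that reference, so your proposal is a faithful reconstruction rather than a genuinely different argument.
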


To check \eqref{propriete_de_G}, we will use the following sufficient conditions, proved in \cite[Theorem 17]{beauchard-pozzoli2}.

\begin{theorem} \label{thm:gradient-algebra}
Let $M=\T^d$ or $\R^d$ and $\frak{G} \subset \Vect(M)$.
The property \eqref{propriete_de_G} holds when
\begin{itemize}
   \item $M=\T^d$ and $\frak{G}$ contains the vector fields $\nabla\sin(x_j)$, $\nabla\cos(x_j)$, $\nabla\cos(2x_j)$, $\nabla\sin(x_j)\sin(x_k)$,\\ $\nabla\sin(x_j)\cos(x_k)$, $\nabla\cos(x_j)\cos(x_k)$
    for $j,k=1,\dots,d$ with $j\neq k$. 
    \item $M=\R^d$ and $\frak{G}$ contains the vector fields $\nabla x_j$ and $\nabla( x_j^{a} e^{-|x|^2/4} )$, for $j=1,\dots,d$ and $a=0,1,2$.
\end{itemize}    
\end{theorem}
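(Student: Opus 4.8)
The plan is to split \eqref{propriete_de_G} into a soft continuity reduction and a Lie-algebraic density computation, the latter carried out separately on $\T^d$ and $\R^d$. First I would prove that $P\mapsto\mathcal{L}_P$ is sequentially continuous in the following sense: if $(g_n)\subset\Vect(M)$ converges to $g$ in $C^1$ on every compact set and the global Lipschitz constants $\sup_n\mathrm{Lip}(g_n)$ are bounded, then $\mathcal{L}_{\Phi_{g_n}^1}\to\mathcal{L}_{\Phi_g^1}$ strongly. Indeed, continuous dependence for ODEs gives $\Phi_{g_n}^1\to\Phi_g^1$ and $D\Phi_{g_n}^1\to D\Phi_g^1$ locally uniformly (the differential solving the linearized flow), hence $J_{\Phi_{g_n}^1}\to J_{\Phi_g^1}$ locally uniformly; since every $\mathcal{L}_P$ is unitary, it suffices by density to test strong convergence on $\psi\in C_c^\infty(M)$, and for such $\psi$ the formula $\mathcal{L}_P\psi=J_P^{1/2}(\psi\circ P)$ together with dominated convergence closes the argument, the uniform Lipschitz bound ensuring that a single compact set absorbs the supports of all $\psi\circ\Phi_{g_n}^1$. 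This reduces \eqref{propriete_de_G} to showing that $\text{Lie}(\frak{G})$ is dense in $\Vect_c(M)$ for the topology of $C^1$-convergence on compact sets with uniformly bounded Lipschitz constants.

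On $\T^d$ the algebraic step amounts to proving that $\text{Lie}(\frak{G})$ contains a $C^1$-dense set of trigonometric vector fields. The diagonal first harmonics $\nabla\sin x_j=\cos(x_j)e_j$ and $\nabla\cos x_j=-\sin(x_j)e_j$ already bracket to a constant field, $[\nabla\sin x_j,\nabla\cos x_j]=-e_j$, and since $[e_k,Y]=\partial_k Y$ for the constant field $e_k$, bracketing against these constants differentiates. The second harmonic $\nabla\cos 2x_j$ is included precisely to supply the frequency-two diagonal mode, which cancels when one brackets the first harmonics alone (there the combination $\cos^2+\sin^2$ collapses the frequency-two part). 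The mixed generators $\nabla(\sin x_j\sin x_k)$, $\nabla(\sin x_j\cos x_k)$, $\nabla(\cos x_j\cos x_k)$ couple distinct variables, and iterating brackets then climbs to gradients of trigonometric polynomials of all frequencies. Crucially, the bracket of two gradients is generically not a gradient: by the flat-space identity $[\nabla\phi,\nabla\psi]=\mathrm{Hess}(\psi)\nabla\phi-\mathrm{Hess}(\phi)\nabla\psi$ one reaches, for instance, the rotation field $(-x_2,x_1)$, so the non-exact directions are also generated. Trigonometric vector fields being $C^1$-dense in $\Vect(\T^d)$, this yields the claim.

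On $\R^d$ the same mechanism applies with Gaussians replacing trigonometric functions. The fields $\nabla x_j=e_j$ are the constant directions, and again $[\nabla x_k,Y]=\partial_k Y$ differentiates; applying this repeatedly to $\nabla(x_j^a e^{-|x|^2/4})$ produces $\nabla(p(x)e^{-|x|^2/4})$ for every polynomial $p$. Brackets of two such Gaussian gradients, via the identity above, both produce the rotational directions and upgrade the weight to $e^{-|x|^2/2}$; combined with differentiation this delivers vector fields whose components are $p(x)e^{-|x|^2/2}$ with $p$ an arbitrary polynomial, in each coordinate direction, i.e. arbitrary finite combinations of Hermite functions. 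Since a compactly supported field is Schwartz, its Hermite expansion converges in $C^1(\R^d)$, so the truncations furnish approximants $f_n\in\text{Lie}(\frak{G})$ with $f_n\to f$ uniformly together with first derivatives, in particular with uniformly bounded Lipschitz constants; thanks to the Gaussian decay every element of $\text{Lie}(\frak{G})$ is globally Lipschitz, hence lies in $\Vect(\R^d)$. This is exactly the convergence required by the reduction of the first paragraph.

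The main obstacle is the algebraic density step rather than the continuity reduction: one must verify that repeated brackets fill the whole tangent space at each point, the rotational as well as the gradient directions, and, on $\R^d$, reconcile two competing demands, namely reaching all polynomial coefficients (which forces differentiation and lowers the Gaussian weight) while retaining enough Gaussian decay that the approximants stay globally Lipschitz and their time-one flows remain close to the identity far out, so that the continuity argument applies against compactly supported test states. Organizing this weight-and-frequency bookkeeping into a clean, self-contained density statement, in place of the strong-operator semigroup estimates of the linear theory in \cite{beauchard-pozzoli2}, is where the real work lies.
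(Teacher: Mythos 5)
A preliminary remark: this paper does not prove Theorem~\ref{thm:gradient-algebra} at all; it is imported verbatim from \cite[Theorem 17]{beauchard-pozzoli2}, so there is no in-paper proof to compare against. Your two-step architecture --- (i) a continuity lemma showing that $f\mapsto\mathcal{L}_{\Phi_f^1}$ is strongly sequentially continuous under $C^1$-convergence on compact sets with uniformly bounded Lipschitz constants, and (ii) a Lie-algebraic density statement for $\text{Lie}(\frak{G})$ --- is the natural (and, as far as I can tell, the intended) route to \eqref{propriete_de_G}, and step (i) as you describe it is sound: unitarity of $\mathcal{L}_P$ reduces the strong convergence to test functions in $C_c^\infty(M)$, and the locally uniform convergence of $\Phi_{g_n}^1$, $D\Phi_{g_n}^1$ and $J_{\Phi_{g_n}^1}$ then closes it by dominated convergence.

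The genuine gap is in step (ii), which is the entire content of the theorem and which you describe rather than prove. On $\T^d$ you must show that iterated brackets of the six listed families of gradient fields span (a $C^1$-dense subspace of) \emph{all} trigonometric vector fields, including every Fourier mode in every coordinate direction and in particular the non-gradient directions; your sketch verifies only the one-variable diagonal mechanism ($[\nabla\sin x_j,\nabla\cos x_j]=-e_j$, climbing harmonics via $\nabla\cos 2x_j$) and asserts the rest. On $\R^d$ the situation is more delicate than the sketch suggests: the generators carry the weight $e^{-|x|^2/4}$, a single bracket $[\nabla(pe^{-|x|^2/4}),\nabla(qe^{-|x|^2/4})]=\mathrm{Hess}(qe^{-|x|^2/4})\nabla(pe^{-|x|^2/4})-\mathrm{Hess}(pe^{-|x|^2/4})\nabla(qe^{-|x|^2/4})$ produces only the \emph{antisymmetric} combinations at weight $e^{-|x|^2/2}$ (the symmetric one being $\nabla\bigl(\nabla(pe^{-|x|^2/4})\cdot\nabla(qe^{-|x|^2/4})\bigr)$, which is not obviously in the algebra), and further brackets among weighted fields degrade the weight to $e^{-3|x|^2/4}$ and beyond. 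So the claim that one obtains \emph{every} field $r(x)e^{-|x|^2/2}e_j$ with $r$ an arbitrary polynomial --- which is exactly what your Hermite-expansion endgame requires --- is a nontrivial spanning statement that must be computed, not announced. You concede this yourself (``this is where the real work lies''), which is an accurate self-assessment: as written, the proposal establishes the soft reduction but leaves the theorem's actual content unproved.
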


\subsection{Structure of the article}
Section~\ref{sec:WP} is dedicated to the well-posedness of 
log-NLS, and
a representation formula. In Section~\ref{sec:phases}, we prove the STC
of phases. In Section~\ref{sec:gradient-flows}, we show the STC of
flows of gradient vector fields, by proving a version of the
Trotter-Kato product formula for the log-NLS equation. This concludes
the proofs of Theorems~\ref{Thm:Main_torus} and
\ref{Thm:Main}. Finally, in
Section~\ref{sec:non-controllability} we prove Theorem \ref{Thm:negatif}. 

\bigbreak

Throughout the article, $C$ denotes a constant whose value is
unimportant and may change from line to line. The dependence of $C$
upon various parameters is emphasized, when considered as relevant.

\section{Well-posedness of the Cauchy problem}\label{sec:WP}

\subsection{A special property of the logarithmic nonlinearity}

The following estimate is a key point in this article, and has been
extended in various ways since (see e.g. \cite[Lemma~A.1]{HaOz25}). For the sake of completeness, we recall the proof of \cite[Lemme 1.1.1]{cazenave-haraux}. 

\begin{lemma}\label{lem:CH}
For every $z_1, z_2 \in \C$,
\begin{equation} \label{Cazenave}
\left| \IM \left( 
(\overline{z_2}-\overline{z_1})
\left( 
z_2 \log |z_2|^2 - z_1 \log |z_1|^2
\right) \right)\right|
\leq 2 |z_1-z_2|^2.
\end{equation}
\end{lemma}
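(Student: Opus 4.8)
The plan is to pass to polar coordinates and reduce \eqref{Cazenave} to an elementary scalar inequality. First I would treat the degenerate cases: if $z_1=0$ or $z_2=0$, then with the convention $0\log 0=0$ the quantity inside $\IM$ is real (it equals $|z_2|^2\log|z_2|^2$ when $z_1=0$), so its imaginary part vanishes and \eqref{Cazenave} is trivial. Assume henceforth $z_j=r_je^{i\theta_j}$ with $r_j>0$, and set $\delta:=\theta_2-\theta_1$. Next I would expand the product: writing $L_j:=\log|z_j|^2=2\log r_j$, the diagonal terms $\overline{z_j}z_jL_j=|z_j|^2L_j$ are real and are killed by $\IM$, while for the cross terms I would use $\IM(\overline{z_2}z_1)=-\IM(\overline{z_1}z_2)=-r_1r_2\sin\delta$. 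This collapses the expression to
\begin{equation*}
\IM\big((\overline{z_2}-\overline{z_1})(z_2L_2-z_1L_1)\big)=\IM(\overline{z_1}z_2)\,(L_1-L_2)=2r_1r_2\sin\delta\,\log\frac{r_1}{r_2}.
\end{equation*}

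Since $|z_1-z_2|^2=(r_1-r_2)^2+2r_1r_2(1-\cos\delta)$, after dividing by $2$ the claim \eqref{Cazenave} becomes the real inequality
\begin{equation*}
r_1r_2\,|\sin\delta|\,\Big|\log\frac{r_1}{r_2}\Big|\le (r_1-r_2)^2+2r_1r_2(1-\cos\delta).
\end{equation*}
To prove this I would keep \emph{both} the angular and the radial smallness rather than bounding $|\sin\delta|\le 1$, which fails: when $\delta$ is small but nonzero and $r_1\approx r_2$, the right-hand side is only of order $(r_1-r_2)^2$, too small for such a crude estimate. Using $1-\cos\delta=2\sin^2(\delta/2)\ge\tfrac12\sin^2\delta$, the right-hand side dominates $(r_1-r_2)^2+r_1r_2\sin^2\delta$; Young's inequality applied to $\sqrt{r_1r_2}\,|\sin\delta|$ and $\sqrt{r_1r_2}\,|\log(r_1/r_2)|$ then bounds the left-hand side by $\tfrac12 r_1r_2\sin^2\delta+\tfrac12 r_1r_2\log^2(r_1/r_2)$. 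The angular parts match with room to spare, so everything reduces to the single scalar estimate
\begin{equation*}
r_1r_2\log^2\frac{r_1}{r_2}\le 2(r_1-r_2)^2,\qquad\text{equivalently}\qquad t(\log t)^2\le 2(t-1)^2\ \ (t>0).
\end{equation*}

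\textbf{The main obstacle} is this last scalar inequality, which is sharp as $t\to1$ and therefore cannot be obtained by discarding constants. I would in fact prove the stronger bound $t(\log t)^2\le(t-1)^2$ via the substitution $t=e^{2u}$ (taking $u\ge0$ by the symmetry $t\mapsto 1/t$), which turns it into $2ue^u\le e^{2u}-1$; this holds because the difference vanishes at $u=0$ and has derivative $2e^u(e^u-1-u)\ge0$ for $u\ge0$. Tracking the constants back through the chain then yields \eqref{Cazenave}, indeed with a factor better than $2$.
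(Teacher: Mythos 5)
Your proof is correct, and every step checks out: the cancellation of the diagonal terms, the reduction to $2r_1r_2\sin\delta\,\log(r_1/r_2)$, the decomposition $|z_1-z_2|^2=(r_1-r_2)^2+2r_1r_2(1-\cos\delta)$, the use of $1-\cos\delta\ge\tfrac12\sin^2\delta$ and Young's inequality, and the scalar bound $t(\log t)^2\le(t-1)^2$ via $2ue^u\le e^{2u}-1$. Your opening move is the same as the paper's: both proofs exploit the identity that reduces the imaginary part to $\IM(\overline{z_1}z_2)$ times a difference of logarithms of the moduli. After that the routes diverge. The paper stays in Cartesian form and simply multiplies two one-line estimates, $|\IM(\overline{z_2}z_1)|\le|z_1|\,|z_2-z_1|$ and (assuming without loss of generality $0<|z_1|\le|z_2|$) $|\log|z_2|-\log|z_1||\le|z_2-z_1|/|z_1|$; the factor $|z_1|$ cancels and the constant $2$ drops out immediately. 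You instead pass to polar coordinates, keep the angular and radial contributions to $|z_1-z_2|^2$ separate, and balance them with Young's inequality against the product $|\sin\delta|\cdot|\log(r_1/r_2)|$; this costs you the scalar lemma $t(\log t)^2\le(t-1)^2$ but buys the sharper constant $1$ in place of $2$ (and that constant is in fact optimal, as one sees from $z_1=1$, $z_2=(1+\epsilon)e^{i\epsilon}$ with $\epsilon\to0$). Your observation that the naive bound $|\sin\delta|\le1$ would fail near $r_1=r_2$ is exactly right and is the reason the paper's proof keeps the factor $|\IM(\overline{z_2}z_1)|$ intact rather than estimating the two factors of the cross term independently. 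For the application in the paper (the Gr\"onwall-type $L^2$ uniqueness and Lipschitz estimates) only the quadratic dependence on $|z_1-z_2|$ matters, so the improved constant is a bonus rather than a necessity, and the paper's shorter argument suffices; but your version is a legitimate, self-contained alternative.
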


\begin{proof}
We have the identity 
\begin{equation} \label{identite_C}
\IM \left( 
(\overline{z_2}-\overline{z_1})
\left( 
z_2 \log |z_2| - z_1 \log |z_1|
\right) \right)
= \IM \left( \overline{z_2} z_1 \right)
\left( 
 \log |z_2| -  \log |z_1|
\right) 
\end{equation}
and the estimate
\begin{equation} \label{estim_C}
\left|\IM \left( \overline{z_2} z_1 \right)\right| = \left| \frac{ \overline{z_2} z_1 - z_2 \overline{z_1} }{ 2i } \right|= \left|\frac{z_1(\overline{z_2}-\overline{z_1})+\overline{z_1}(z_1 - z_2)}{2i}\right| \leq |z_1||z_2-z_1|.
\end{equation}
Without loss of generality, one may assume $0<|z_1|\leq|z_2|$ and then
\begin{equation} \label{log_C}
\left| \log|z_2|-\log|z_1| \right|  \leq \frac{|z_2-z_1|}{|z_1|}.
\end{equation}
We obtain \eqref{Cazenave} by gathering \eqref{identite_C}, \eqref{estim_C} and \eqref{log_C}.
\end{proof}

\subsection{Well-posedness} \label{App:WP}

In this section, we prove the well-posedness of the equations \eqref{logNLS_tore} and \eqref{logNLS+} in a unified way: we work on the generic equation \eqref{eq:schro}, which corresponds to 
\begin{itemize}
\item  either $M=\T^d$, $V \in C^{\infty}(\T^d;\R)$, $m=2d$,
  $W_{2j-1}(x)=\sin \langle b_j, x \rangle$ and $W_{2j}(x)=\cos \langle
  b_j, x \rangle$,
\item or $M=\R^d$, $V \in C^{\infty}(\R^d;\R)$ is at most quadratic i.e. satisfies \eqref{Hyp:V_transp}, $m=d+1$, $W_j(x)=x_j$ for $j=1,\dots,d$, and $W_{d+1}(x)=e^{-|x|^2/2}$.
\end{itemize}
The presence of the nonlinear term $\psi \log |\psi|^2$, with
infinite derivative at 0, introduces a difficulty solved by the
regularization argument introduced in
\cite{carles-gallagher}. We consider the function space 
\begin{equation} \label{def:Sigma}
    \Sigma := 
    \left\lbrace \begin{array}{l}
    H^1(\T^d;\C)  \quad \text{ if  } M=\T^d, 
    \\
    \{ \psi \in H^1(\R^d;\C) ; |x| \psi \in L^2(\R^d) \} \quad \text{ if  } M=\R^d.
\end{array}\right.
\end{equation}
The existence result \cite[Theorem~1.1]{CHO24} (without potential) is
readily adapted to the presence of an 
external potential, by using in addition the same arguments as in
\cite{carles-ferriere}.

\begin{proposition} \label{Prop:WP}
Let $T>0$ and $u \in PWC((0,T),\R^{m})$.
\begin{enumerate}
 \item For every $\psi_0 \in \Sigma$, the Cauchy problem
   \eqref{eq:schro} has a unique solution $\psi \in 
   L^\infty([0,T],\Sigma) \cap C^0([0,T],L^2(M))$. Moreover,
   $\|\psi(t)\|_{L^2}=\|\psi_0\|_{L^2}$ and there exists $C>0$
   such that, for every $\psi_0 \in \Sigma$,  
$\| \psi \|_{L^{\infty}([0,T],\Sigma)} \le C \| \psi_0 \|_{\Sigma}$.
 \item The $\Sigma$ solution map is uniquely extended to $L^2(M)$:
  for every $\psi_0 \in L^2(M)$, the Cauchy problem \eqref{eq:schro} has a unique solution $\psi \in C^0([0,T],L^2(M))$. Moreover, $\|\psi(t)\|_{L^2}=\|\psi_0\|_{L^2}$. 
\item The solution map is Lipschitz continuous: For every $\psi_0,
  \widetilde{\psi}_0 \in L^2(M)$ and $t \in [0,T]$, 
\begin{equation} \label{Lipschitz}
\|\psi(t;u,\psi_0)-\psi(t;u,\widetilde{\psi}_0)\|_{L^2} \leq e^{2|\lambda| t} \|\psi_0-\widetilde{\psi}_0\|_{L^2}.
\end{equation}
\end{enumerate}
\end{proposition}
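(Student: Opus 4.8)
The plan is to first reduce the piecewise-constant control to the autonomous setting. Since $u \in PWC((0,T),\R^m)$, there is a subdivision $0=t_0<t_1<\dots<t_N=T$ on each of whose intervals $u$ equals a constant vector $u^{(k)}$; on $(t_{k-1},t_k)$ equation \eqref{eq:schro} becomes the time-independent log-NLS with the single real potential $V_{u^{(k)}} := V + \sum_{j=1}^m u_j^{(k)} W_j$. I would check that each $V_{u^{(k)}}$ fits the existence theory: on $\T^d$ it is smooth and bounded; on $\R^d$ it is smooth and at most quadratic in the sense of \eqref{Hyp:V_transp}, because $V$ is, the $W_j(x)=x_j$ are at most linear, and $W_{d+1}(x)=e^{-|x|^2/2}$ is Schwartz, hence has all derivatives bounded. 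With this, the existence result \cite[Theorem~1.1]{CHO24}, combined with the treatment of an external potential as in \cite{carles-ferriere}, provides on each subinterval a unique solution in $L^\infty([t_{k-1},t_k],\Sigma)\cap C^0([t_{k-1},t_k],L^2(M))$, together with the $\Sigma$-bound and the $L^2$-norm conservation. The engine behind this is the regularization of the nonlinearity $z\mapsto \lambda z\log|z|^2$ from \cite{carles-gallagher} (it is not locally Lipschitz at $z=0$), which I would invoke rather than reprove. Concatenating these solutions across the $t_k$, using that the state remains $\Sigma$-valued with controlled norm at each break point, yields item (1) on all of $[0,T]$.

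For the $L^2$-conservation I would pair the equation with $\bar\psi$, integrate over $M$ and take imaginary parts: the Laplacian and the real potential $V_{u^{(k)}}$ produce real quantities, and the logarithmic term $\lambda\int_M |\psi|^2\log|\psi|^2\,dx$ is real as well, so $\frac{d}{dt}\|\psi\|_{L^2}^2=0$. The heart of the proof is the Lipschitz estimate \eqref{Lipschitz}, which furnishes both uniqueness and the extension to $L^2$. Writing $w:=\psi(\cdot;u,\psi_0)-\psi(\cdot;u,\widetilde\psi_0)$, the difference solves $i\partial_t w=-\tfrac12\Delta w+V_u w+\lambda\big(\psi\log|\psi|^2-\widetilde\psi\log|\widetilde\psi|^2\big)$. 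Pairing with $\bar w$, integrating, and taking imaginary parts, the Laplacian contribution ($\tfrac12\|\nabla w\|_{L^2}^2$) and the real-potential contribution are real and drop out, leaving
\[
\tfrac12\frac{d}{dt}\|w(t)\|_{L^2}^2=\lambda\int_M \IM\!\Big[\big(\overline{\psi}-\overline{\widetilde\psi}\big)\big(\psi\log|\psi|^2-\widetilde\psi\log|\widetilde\psi|^2\big)\Big]\,dx.
\]
Applying Lemma~\ref{lem:CH} pointwise with $z_2=\psi(x)$ and $z_1=\widetilde\psi(x)$ bounds the integrand by $2|w(x)|^2$, whence $\big|\frac{d}{dt}\|w\|_{L^2}^2\big|\le 4|\lambda|\,\|w\|_{L^2}^2$, and Gronwall's inequality gives $\|w(t)\|_{L^2}\le e^{2|\lambda|t}\|w(0)\|_{L^2}$, i.e.\ \eqref{Lipschitz}. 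Taking $\psi_0=\widetilde\psi_0$ yields uniqueness in $\Sigma$.

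To obtain item (2), I would use that $\Sigma$ is dense in $L^2(M)$: for $\psi_0\in L^2$ choose $\psi_0^n\in\Sigma$ with $\psi_0^n\to\psi_0$; by \eqref{Lipschitz} the solutions $\psi(\cdot;u,\psi_0^n)$ form a Cauchy sequence in $C^0([0,T],L^2)$, hence converge to a limit $\psi\in C^0([0,T],L^2)$ that is independent of the approximating sequence (again by \eqref{Lipschitz}) and which I take as the $L^2$-solution. Norm conservation and the bound \eqref{Lipschitz} pass to the limit by continuity, giving items (2) and (3) on all of $L^2$, with uniqueness of this continuous extension.

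The main obstacle is the rigorous justification of the energy identities: the $\Sigma$-solutions a priori only lie in $L^\infty_t\Sigma$ and the nonlinearity is singular at the origin, so one cannot differentiate $\|w(t)\|_{L^2}^2$ naively. The clean remedy, as in \cite{carles-gallagher,CHO24}, is to carry out the $L^2$-conservation and the Cazenave--Haraux/Gronwall computation at the level of the regularized equation, where every term is smooth and the manipulations are licit, and then pass to the limit in the regularization parameter; crucially, the bound of Lemma~\ref{lem:CH} is uniform in that parameter, which is exactly what makes the limit harmless. A minor secondary point is to ensure, in the concatenation step, that the solution is genuinely $\Sigma$-valued at the break points $t_k$ (not merely for a.e.\ $t$), which follows from the (weak) continuity of $t\mapsto\psi(t)$ into $\Sigma$ provided by the existence theory.
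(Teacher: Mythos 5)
Your proposal is correct and follows essentially the same route as the paper: reduction to constant controls, existence in $\Sigma$ via the regularization of the logarithm from \cite{carles-gallagher,CHO24,carles-ferriere} with energy estimates uniform in the regularization parameter, uniqueness and the Lipschitz bound \eqref{Lipschitz} from Lemma~\ref{lem:CH} plus Gr\"onwall, and extension to $L^2$ by density. The only difference is one of detail: the paper writes out the $\eps$-uniform $H^1$ and momentum estimates explicitly and flags that verifying the $L^2$-limit is a distributional solution requires the duality argument of \cite{CHO24}, whereas you cite these steps; your identification of where the rigor lives (doing the energy identities at the regularized level) matches the paper's.
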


\begin{proof} It suffices to work with constant controls $u_j$. 

\noindent \emph{Step 1: We prove that, for every
$\psi_0 \in \Sigma$, the Cauchy problem \eqref{eq:schro} has a unique solution $\psi \in L^{\infty}((0,T);\Sigma) \cap C^0([0,T];L^2(M))$; moreover, for every $t \in [0,T]$, $\|\psi(t)\|_{L^2}=\|\psi_0\|_{L^2}$. }
For $M=\R^d$ this is a direct application to the potential
$V_u(x):=V(x)-\sum_{j=1}^m u_j W_j(x)$ of \cite[Proposition
1.3]{carles-ferriere}, whose argument we briefly recall, to show that
it readily includes the case $M=\T^d$. For $\eps>0$, let $\psi^\eps$
denote the solution to the regularized problem
\begin{equation}\label{eq:psi-eps}
  i\d_t \psi^\eps +\frac{1}{2}\Delta \psi^\eps = V\psi^\eps +\lambda
  \psi^\eps \log\(\eps +|\psi^\eps|^2\) + \sum_{j=1}^m u_j W_j
  \psi^\eps\quad ;\quad \psi^\eps_{\mid t=0}=\psi_0. 
\end{equation}
For fixed $\eps>0$, the nonlinearity is smooth (in particular, it is
locally Lipschitz continuous), and
grows more slowly than any power-like nonlinearity. Classical results
(see e.g. \cite{BGT,Ca11,C-2003,TT-2006}) imply that the above equation
has a unique, global solution $\psi^\eps \in C(\R;\Sigma)$, and the
$L^2$-norm is preserved by the flow,
\begin{equation*}
  \|\psi^\eps(t)\|_{L^2(M)} = \|\psi_0\|_{L^2(M)},\quad \forall t\in \R.
\end{equation*}
To obtain formally this identity, multiply \eqref{eq:psi-eps} by $\overline
{\psi^\eps}$, integrate over $M$, and take the imaginary part, to get,
since $V$ and $u_jW_j$ are real-valued,
\begin{equation*}
  \frac{d}{dt} \|\psi^\eps(t)\|_{L^2(M)}^2=0.
\end{equation*}
Differentiating \eqref{eq:psi-eps} with respect to $x_j$, we get
\begin{align*}
  i\d_t \d_j\psi^\eps +\frac{1}{2}\Delta \d_j \psi^\eps &= V\d_j
  \psi^\eps+\psi^\eps\d_j V +\lambda
 \d_j \psi^\eps \log\(\eps +|\psi^\eps|^2\)
  +2\frac{\psi^\eps}{\eps+|\psi^\eps} \RE \(\overline \psi^\eps\d_j
                                                          \psi^\eps\) \\
  &\quad + \sum_{\ell=1}^m u_\ell W_\ell
 \d_j  \psi^\eps+ \sum_{\ell=1}^m u_\ell \d_j W_\ell  \psi^\eps.
\end{align*}
Multiplying this equation by $\d_j\overline
{\psi^\eps}$, integrating over $M$, and taking the imaginary part, we
find, summing over $j\in\{1,\dots,d\}$,
\begin{align*}
  \frac{1}{2} \frac{d}{dt} \|\nabla \psi^\eps(t)\|_{L^2(M)}^2
  & \le \int_M
|\nabla V| |\psi^\eps| |\nabla\psi^\eps| + 2|\lambda| \int_M
\frac{|\psi^\eps|^2}{\eps+|\psi^\eps|^2} |\nabla\psi^\eps|^2 +
\sum_{\ell=1}^m \int_M |u_\ell| |\nabla W_\ell| |\psi^\eps\nabla
    \psi^\eps|\\
 & \le 
\left\| \psi^\eps \nabla V\right\|_{L^2(M)}
   \|\nabla\psi^\eps\|_{L^2(M)} +
   2|\lambda|\|\nabla\psi^\eps\|_{L^2(M)}^2 \\
  &\quad +
\sum_{\ell=1}^m \int_M |u_\ell| \|\psi^\eps \nabla
   W_\ell \|_{L^2(M)}|\|\nabla\psi^\eps\|_{L^2(M)}  ,
\end{align*}
where we have used Cauchy-Schwarz inequality for the first and last
terms on the right hand side, and a direct estimate for the second
term. When $M=\T^d$, $\nabla V$ and $\nabla W_j$ are bounded on $M$,
and Gr\"onwall lemma yields, together with the conservation of the
$L^2$-norm of $\psi^\eps$,
\begin{equation*}
  \|\nabla \psi^\eps(t)\|_{L^2(M)}\le \|\nabla
  \psi_0\|_{L^2(M)}e^{C|t|},\quad \forall t\in \R, 
\end{equation*}
for some $C$ independent of $\eps>0$. When $M=\R^d$, $\nabla V$ may
grow linearly (in $x$), and we consider the equation satisfied by
$x_j\psi^\eps$, to proceed like with $\d_j\psi^\eps$: the
multiplication by $x_j$ commutes with all the linear terms except the
Laplacian, so
\begin{align*}
  i\d_t \(x_j\psi^\eps\) +\frac{1}{2}\Delta \(x_j \psi^\eps\)
  = \d_j \psi^\eps+ V x_j \psi^\eps+\lambda
x_j \psi^\eps \log\(\eps +|\psi^\eps|^2\)
  + \sum_{\ell=1}^m u_\ell W_\ell
 x_j  \psi^\eps,
\end{align*}
and the energy estimate (starting with the multiplication by $x_j
\overline {\psi^\eps}$) yields
\begin{equation*}
   \frac{1}{2} \frac{d}{dt} \|x \psi^\eps(t)\|_{L^2(\R^d)}^2
   \le \|x \psi^\eps(t)\|_{L^2(\R^d)} \|\nabla
  \psi^\eps(t)\|_{L^2(\R^d)}\le \frac{1}{2}\( \|x
  \psi^\eps(t)\|_{L^2(\R^d)}^2 + \|\nabla
  \psi^\eps(t)\|_{L^2(\R^d)}^2\). 
\end{equation*}
Since $D^2 V$ is bounded, $|\nabla V(x)|\le C(1+|x|)$ for some
$C>0$, the Gr\"onwall lemma applied to $\|x
  \psi^\eps(t)\|_{L^2(\R^d)}^2 + \|\nabla
  \psi^\eps(t)\|_{L^2(\R^d)}^2$ yields the existence of $C$
  independent of $\eps>0$ such that
  \begin{equation*}
  \| \psi^\eps(t)\|_{\Sigma}\le \|
  \psi_0\|_{\Sigma}e^{C|t|},\quad \forall t\in \R.
\end{equation*}
Therefore, this estimate holds both for $M=\T^d$ and $M=\R^d$.

The end of the argument is then the same as in
\cite{carles-gallagher}, we simply outline the main steps. 
In view of \eqref{eq:psi-eps}, we obtain a uniform (in $\eps>0$)
estimate for $\d_t \psi^\eps$ in $L^\infty_{\rm
  loc}(\R;\Sigma^*)$. Arzela-Ascoli theorem implies that up to a
subsequence, $\psi^\eps$ converges to some $\psi\in L^\infty_{\rm
  loc}(\R;\Sigma)\cap C(\R;L^2(M))$, and it can be checked that it solves
\eqref{eq:schro}.

\noindent \emph{Step 2: Uniqueness and Lipschitz continuity in $\Sigma$.}
Uniqueness is obtained thanks to
Lemma~\ref{lem:CH}. Let $\psi,\widetilde \psi\in L^\infty_{\rm
  loc}(\R;\Sigma)\cap C(\R;L^2(M))$ be solutions of \eqref{eq:schro}. Using the
standard $L^2$-estimate recalled above, and \eqref{Cazenave}, we obtain 
\begin{equation*}\frac{d}{dt} \| \psi-\widetilde{\psi} \|_{L^2}^2
= 2 \lambda  \int_{M} \IM  \left( (\psi-\widetilde{\psi})\left( \psi \log|\psi|^2 - \widetilde{\psi} \log|\widetilde{\psi}|^2  \right) \right) dx
 \leq 4 |\lambda| \| \psi-\widetilde{\psi} \|_{L^2}^2,
\end{equation*}
thus
$\| (\psi-\widetilde{\psi})(t) \|_{L^2}^2 \leq
\|\psi_0-\widetilde{\psi}_0\|_{L^2}^2 e^{4|\lambda|t}$ 
which gives the conclusion. The same argument yields \eqref{Lipschitz}
when the initial data $\psi_0,\widetilde \psi_0\in \Sigma$. 

\noindent \emph{Step 3: Continuation of the solution map.} 
For $\psi_0 \in L^2(M)$, let $(\psi_0^n)_{n\in\N} \subset \Sigma$ be
such that $\|\psi_0-\psi_0^n\|_{L^2} \to 0$ as $n\to \infty$. By
Step~2, the sequence of solutions $\psi^n\in L^\infty_{\rm
  loc}(\R;\Sigma)$  is a Cauchy sequence of the Banach space
$C^0([0,T];L^2(M))$, thus it converges. The estimate \eqref{Lipschitz}
passes to the limit $n \to \infty$. The most delicate part of the
argument consists in verifying that the limit $u$ is indeed a solution
to \eqref{eq:schro}, in $H^{-2}(\omega)$, with $\omega=\T^d$ if
$M=\T^d$, and $\omega$ an arbitrary open set $\omega \Subset\R^d$ if
$M=\R^d$.  This is achieved by duality arguments, using the moderate
growth of the logarithm at infinity (and equivalently, its moderate
singularity 
at the origin). The details are given in the proof of
\cite[Theorem~1.1]{CHO24}, and we omit them here.
\end{proof}

\subsection{Eikonal equation}

We recall the strategy followed in order to solve the eikonal equation
associated to the Schr\"odinger equation, as presented in
\cite[Chapter~1]{CaBook2}. The following statement is different though,
as the initial phase is at most linear (it is at most quadratic in
\cite{CaBook2}), and it contains a more
quantitative aspect, as we want to ensure a lower bound for the lifespan
of smooth solutions. 

\begin{proposition} \label{Prop:eikonal}
Let $\varphi \in C^{\infty}(M)$ at most linear (i.e. $D^\alpha \varphi \in L^{\infty}(M)$ for every $\alpha \in \N^d \setminus\{0\}$) and $s^*>0$ such that $s^* \| D^2 \varphi \|_{L^{\infty}} < 1$. There exists a unique smooth solution of
\begin{equation} \label{eq:phi}
\begin{cases}
  \partial_s \phi(s,x)+\frac{1}{2} | \nabla \phi(s,x) |^2 =0,
  & (s,x) \in (0,s^*)\times  M, \\ 
   \phi(0,\cdot)=\varphi.
\end{cases}
\end{equation}
Moreover, there exists $C=C(\varphi)>0$ such that, for every $s \in [0,s^*]$,
\begin{equation} \label{estim_phi}
\| \phi(s)-\phi(0)-s\partial_s\phi(0)\|_{L^{\infty}(M)} \leq C s^2,
\qquad
\| \nabla \phi(s)-\nabla \varphi \|_{W^{1,\infty}(M)} \leq C s.
\end{equation}
\end{proposition}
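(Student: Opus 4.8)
The plan is to solve \eqref{eq:phi} by the method of characteristics, which here yields an explicit formula because the Hamiltonian $H(x,p)=\frac12|p|^2$ has straight characteristics. For the Hamilton--Jacobi equation $\partial_s\phi+\frac12|\nabla\phi|^2=0$, Hamilton's equations read $\dot x=\nabla\phi$, $\dot p=0$, so the momentum $p=\nabla\phi$ is constant along characteristics, equal to its initial value $\nabla\varphi(x_0)$, and the characteristic issued from $x_0$ is the line
\begin{equation*}
X_s(x_0):=x_0+s\,\nabla\varphi(x_0).
\end{equation*}
First I would show that, for every $s\in[0,s^*]$, the map $X_s$ is a smooth diffeomorphism of $M$. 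Its differential is $DX_s=I+s\,D^2\varphi$, invertible with $\|(DX_s)^{-1}\|\le(1-s^*\|D^2\varphi\|_{L^\infty})^{-1}$ by a Neumann series, thanks to the hypothesis $s^*\|D^2\varphi\|_{L^\infty}<1$. Global invertibility follows from the Banach fixed point theorem: solving $X_s(x_0)=x$ amounts to finding a fixed point of $x_0\mapsto x-s\nabla\varphi(x_0)$, a contraction of ratio $s\|D^2\varphi\|_{L^\infty}<1$ on $\R^d$ (and on the universal cover when $M=\T^d$, the solution descending to the torus by periodicity of $\nabla\varphi$). I denote by $Y_s:=X_s^{-1}$ the inverse, jointly smooth in $(s,x)$ by the implicit function theorem.

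Integrating $\frac{d}{ds}\phi(s,X_s(x_0))=\partial_s\phi+\nabla\phi\cdot\dot X_s=-\frac12|\nabla\varphi(x_0)|^2+|\nabla\varphi(x_0)|^2=\frac12|\nabla\varphi(x_0)|^2$ along characteristics yields the candidate
\begin{equation*}
\phi(s,x)=\varphi(Y_s(x))+\tfrac{s}{2}\,|\nabla\varphi(Y_s(x))|^2.
\end{equation*}
To check it solves \eqref{eq:phi}, I would differentiate the identity $\phi(s,X_s(x_0))=\varphi(x_0)+\frac{s}{2}|\nabla\varphi(x_0)|^2$ in $x_0$: using $DX_s=I+sD^2\varphi$ and $\nabla(|\nabla\varphi|^2)=2D^2\varphi\,\nabla\varphi$, the symmetric factor $(I+sD^2\varphi)$ cancels on both sides and one obtains $\nabla\phi(s,X_s(x_0))=\nabla\varphi(x_0)$. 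Feeding this back into $\frac{d}{ds}\phi(s,X_s(x_0))$ gives $\partial_s\phi=-\frac12|\nabla\phi|^2$, i.e. the eikonal equation, while $\phi(0,\cdot)=\varphi$ is immediate. For uniqueness I would argue conversely: any smooth solution has $\nabla\phi$ constant along its own characteristics, since differentiating the equation in $x_k$ and reading the result along $\dot x=\nabla\phi(s,x)$ gives $\frac{d}{ds}\nabla\phi(s,x(s))=0$; the characteristics are thus forced to be the lines $X_s(x_0)$ with $\nabla\phi(s,X_s(x_0))=\nabla\varphi(x_0)$, and the value of $\phi$ is then determined by the same integration.

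The quantitative estimates \eqref{estim_phi} follow from explicit bounds on $Y_s$ and $D^2\phi$. From $Y_s(x)+s\nabla\varphi(Y_s(x))=x$ one reads $\|Y_s-\mathrm{id}\|_{L^\infty}\le s\|\nabla\varphi\|_{L^\infty}$. Differentiating $\nabla\phi(s,X_s(x_0))=\nabla\varphi(x_0)$ in $x_0$ gives $D^2\phi(s,X_s(x_0))=D^2\varphi(x_0)(I+sD^2\varphi(x_0))^{-1}$, which is uniformly bounded on $[0,s^*]$ by the Neumann estimate and deviates from $D^2\varphi$ by $-s(D^2\varphi)^2(I+sD^2\varphi)^{-1}=O(s)$. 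Combining this with the base-point increment (controlled by $\|D^3\varphi\|_{L^\infty}\|Y_s-\mathrm{id}\|_{L^\infty}$, finite since $\varphi$ is at most linear) and with $\nabla\phi(s,x)-\nabla\varphi(x)=\nabla\varphi(Y_s(x))-\nabla\varphi(x)$, I obtain the second estimate $\|\nabla\phi(s)-\nabla\varphi\|_{W^{1,\infty}}\le Cs$. For the first estimate I note $\phi(0)=\varphi$, $\partial_s\phi(0)=-\frac12|\nabla\varphi|^2$, and that $\partial_s^2\phi=\nabla\phi\cdot D^2\phi\cdot\nabla\phi$ is uniformly bounded on $[0,s^*]$ by the previous bounds and $\|\nabla\phi(s)\|_{L^\infty}=\|\nabla\varphi\|_{L^\infty}$, so a Taylor expansion in $s$ gives $\|\phi(s)-\phi(0)-s\partial_s\phi(0)\|_{L^\infty}\le Cs^2$. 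The main obstacle is the \emph{global} (not merely local) invertibility of $X_s$ together with the uniform control of $(DX_s)^{-1}$ on the whole interval $[0,s^*]$: this is precisely where the sharp hypothesis $s^*\|D^2\varphi\|_{L^\infty}<1$ enters, and it underlies every subsequent estimate.
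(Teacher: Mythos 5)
Your proposal is correct and follows essentially the same route as the paper: characteristics $x_0\mapsto x_0+s\nabla\varphi(x_0)$, global invertibility from $s^*\|D^2\varphi\|_{L^\infty}<1$, an explicit formula for $\phi$ verified by differentiating the identity along characteristics, and the same uniqueness argument. The only (immaterial) differences are that you write $\phi$ in Lagrangian form $\varphi(Y_s(x))+\frac{s}{2}|\nabla\varphi(Y_s(x))|^2$ and invert $X_s$ by a contraction argument, whereas the paper integrates $\partial_s\phi$ at fixed $x$ and invokes the global inverse mapping theorem; the resulting estimates coincide.
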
 

\begin{proof} 
\noindent \emph{Step 1: Analysis:} If $\phi$ is a smooth solution of \eqref{eq:phi} and $\dot{y}(s)=\nabla \phi(s,y(s))$, $y(0)=x$, then
\begin{equation*}\frac{d}{ds} \nabla \phi(s,y(s)) =
\nabla \partial_s \phi(s,y(s)) + D^2\phi(s,y(s)).\nabla \phi(s,y(s)) =
\nabla (\partial_s \phi + \frac{1}{2}|\nabla \phi|^2)(s,y(s))=0,\end{equation*}
thus $\nabla \phi(s,y(s))=\nabla \varphi(x)$ and $y(s)=x+s\nabla \varphi(x)$. 

\medskip

\noindent \emph{Step 2: Construction of a $C^{\infty}$-diffeomorphism.} For every $s \in [0,s^*]$, the map $f_s:M \rightarrow M$ defined by $f_s(x)=x+s\nabla\varphi(x)$ is one-to-one
because
\begin{equation*}f_s(x_1)=f_s(x_2) \quad \Rightarrow \quad |x_1-x_2| = s |\nabla \varphi(x_2)-\nabla \varphi(x_1)| \leq s \|D^{2} \varphi \|_{L^{\infty}} |x_1-x_2|,\end{equation*}
and, for every $x \in M$, $Df_s(x)=I+sD^2 \varphi(x)$  is invertible, thus, by the global inverse mapping theorem $f_s$ is a $C^{\infty}$-diffeomorphism of $M$.
Moreover, for every $(s,x) \in [0,s^*]\times M$, the relation
$x=f_s^{-1}(x)+s \nabla \varphi(f_s^{-1}(x))$ implies
\begin{equation} \label{rec(f)(x)-x}
|f_s^{-1}(x)-x| \leq s \|\nabla \varphi\|_{L^{\infty}}.
\end{equation}
By the implicit function theorem, the map $(s,x) \mapsto f_s^{-1}(x)$ is smooth. 

\medskip 

\noindent \emph{Step 3: Explicit resolution of \eqref{eq:phi}.} In
\eqref{eq:phi}, once $\nabla \phi$ is known, $\phi$ is recovered by
integrating $\d_s \phi$ in time. We thus define a smooth function
$\phi$ on $[0,s^*]\times M$ by 
\begin{equation} \label{phi(s,x)_explicit}
\phi(s,x):=\varphi(x)-\frac{1}{2} \int_0^s \left| \nabla \varphi \left( f_{\sigma}^{-1}(x) \right) \right|^2 d\sigma.
\end{equation}

\noindent \emph{Step 3.a: We prove that, for every $(s,x) \in [0,s^*]
  \times  M$ then $\nabla \phi(s,x)= \nabla \varphi \left( f_s^{-1}(x)
  \right)$.} Using \eqref{phi(s,x)_explicit} and the chain rule, we
obtain
\begin{equation*}
 \nabla \phi(s,x)= \nabla \varphi(x) - \int_0^s 
\left( D f_{\sigma}^{-1}(x) \right)^T \,
D^2 \varphi \left( f_{\sigma}^{-1}(x) \right) \,
\nabla \varphi \left( f_{\sigma}^{-1}(x) \right) \,
d\sigma, 
\end{equation*}
where $A^T$ denotes the transposition of a matrix $A$.
For every $(s,x) \in [0,s^*] \times  M$, $Df_s(x)=I+sD^2 \varphi(x)$ is a symmetric matrix, thus so does
$D f_s^{-1}(x) = \left( D f_s \left( f_s^{-1}(x) \right) \right)^{-1}$.
Moreover, by differentiating with respect to the variable $s$ the
relation $x=(\mathrm{Id}+s\nabla\varphi)\left(f_s^{-1}(x)\right)$, we
obtain 
\begin{equation*}
  -\nabla \varphi \left( f_s^{-1}(x) \right)
= \left( I+s D^2 \varphi \left( f_s^{-1}(x) \right) \right)
\partial_s f_s^{-1}(x)
= D f_s\left( f_s^{-1}(x) \right)\, \partial_s f_s^{-1}(x).
\end{equation*}
Therefore
\begin{equation*}
 \nabla \phi(s,x)= \nabla \varphi(x) + \int_0^s 
\left( D f_{\sigma} \left( f_{\sigma}^{-1}(x) \right) \right)^{-1} \,
D^2 \varphi \left( f_{\sigma}^{-1}(x) \right) \,
D f_{\sigma}\left( f_{\sigma}^{-1}(x) \right)\, \partial_{\sigma} f_{\sigma}^{-1}(x) \,
d\sigma. 
\end{equation*}
For every $(s,y)\in[0,s^*]\times M$, $Df_s(y)=I+s D^2 \varphi(y)$
commutes with $D^2 \varphi(y)$ thus
\begin{equation*}
 \nabla \phi(s,x)= \nabla \varphi(x) + \int_0^s  
D^2 \varphi \left( f_{\sigma}^{-1}(x) \right) \,
\partial_{\sigma} f_{\sigma}^{-1}(x) \,
d\sigma
= \nabla \varphi(x) + \int_0^s \partial_{\sigma} \nabla \varphi \left( f_{\sigma}^{-1}(x) \right) d\sigma = \nabla \varphi \left( f_s^{-1}(x) \right). 
\end{equation*}

\noindent \emph{Step 3.b: We prove that $\phi$ solves \eqref{eq:phi}.}
We deduce from \eqref{phi(s,x)_explicit} and Step 3.a that, for every
$(s,x) \in [0,s^*] \times  M$,
\begin{equation*}
  \partial_s \phi(s,x)= - \frac{1}{2} \left| \nabla \varphi \left( f_s^{-1}(x) \right) \right|^2 = - \frac{1}{2} \left| \nabla \phi(s,x) \right|^2.
\end{equation*}

\medskip

\noindent \emph{Step 4: We prove \eqref{estim_phi}.} We deduce from
\eqref{phi(s,x)_explicit} and \eqref{rec(f)(x)-x} that, for every
$(s,x) \in [0,s^*] \times  M$
\begin{align*}
  & \left| \phi(s,x)-\phi(0,x)-s \partial_s\phi(0,x) \right|
 = 
\left| \frac{1}{2} \int_0^s \left(
\left| \nabla \varphi \left( f_{\sigma}^{-1}(x) \right) \right|^2 -
\left| \nabla \varphi ( x ) \right|^2 \right) d\sigma \right|
\\
 \leq &  \| D^2 \varphi \|_{L^{\infty}} \| \nabla \varphi\|_{L^{\infty}}
\int_0^s \left| f_{\sigma}^{-1}(x) - x\right| d\sigma
 \leq s^ 2\| D^2 \varphi \|_{L^{\infty}} 
 \| \nabla \varphi\|_{L^{\infty}}^2 .
\end{align*}
We deduce from Step 3.a and \eqref{rec(f)(x)-x} that, for every $(s,x)
\in [0,s^*] \times  M$,
\begin{align*}
  \left| \nabla \phi(s,x)-\nabla \varphi(x) \right|
&= \left| \nabla \varphi \left( f_{s}^{-1}(x) \right) - \nabla
  \varphi(x) \right| \leq s\| D^2\varphi \|_{L^{\infty}} \| \nabla
  \varphi\|_{L^{\infty}} ,\\ 
   \left| D^2 \phi(s,x)-D^2 \varphi(x) \right|
 &= \left| D^2 \varphi \left( f_s^{-1}(x) \right) D f_{s}^{-1}(x) 
-D^2 \varphi(x)
\right|\\ 
& \leq   \| D^2 \varphi \|_{L^{\infty}} \left| D f_{s}^{-1}(x) - I
  \right| + \left| D^2 \varphi \left( f_s^{-1}(x) \right)-D^2
  \varphi(x) \right| \leq s C(\varphi). 
\end{align*}
Indeed, the estimate $\left| D f_s(x)-I \right| \leq s \|D^2\varphi\|_{L^\infty}$ implies
$\left| D f_{s}^{-1}(x) - I \right| = \left| \left(D f_s\left(
      f_s^{-1}(x) \right)\right)^{-1} - I \right| \leq C s$ where
$C=C(\varphi)$ does not depend on $x \in M$. 
\end{proof}

\subsection{Representation formula for the solutions of log-NLS}

In this section, we prove a representation formula for the solutions of \eqref{eq:schro}, denoted $\psi(t;u,\psi_0)$.

\begin{definition}
For $f \in \Vect(M)$,
we define
\begin{equation} \label{Def:T_f}
D(\mathcal{T}_f):=\{ \psi \in L^2(M,\C) ; \langle f,  \nabla \psi\rangle \in L^2(M,\C) \}, \qquad
\mathcal{T}_f (\psi) = \langle f , \nabla \psi \rangle + \frac{1}{2} {\rm div}(f) \psi.
\end{equation}
\end{definition}
Since $f$ is globally Lipschitz, the method of characteristics and Liouville formula show that $e^{\mathcal{T}_f} = \mathcal{L}_P$ where $P:=\phi_f^1$ (see \cite[Lemma 29]{beauchard-pozzoli2} for details).
\begin{proposition} \label{Prop:a}
Let $\varphi, s^*, \phi$  be as in Proposition \ref{Prop:eikonal} and $\tau>0$.
For every $\psi_0 \in L^2(M)$ and $s \in (0,s^*)$, then 
$\psi(\tau s ; 0 , \psi_0 e^{i \frac{\varphi}{\tau}})
=a(s,\cdot) e^{i\frac{\phi(s,\cdot)}{\tau}}$
where
\begin{equation} \label{eq:a_epsilon}
\begin{cases}
\left( i \partial_s  + \frac{\tau}{2} \Delta  - \tau V \right) a
+ i \mathcal{T}_{\nabla \phi(s)} a 
 = \tau \lambda a \log |a|^2, & (s,x) \in (0,s^*)\times M, \\
a(0,\cdot)=\psi_0. & 
\end{cases}
\end{equation}
Moreover, there exists $C=C(\varphi,\tau)>0$ such that, for every $\psi_0 \in \Sigma$, the solution of \eqref{eq:a_epsilon} satisfies
\begin{equation} \label{estim_a_Sigma}
\|a\|_{L^{\infty}((0,s^*),\Sigma)} \leq C \| \psi_0 \|_{\Sigma}.
\end{equation}
\end{proposition}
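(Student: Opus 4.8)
The plan is to factor out the eikonal phase $\phi$ of Proposition~\ref{Prop:eikonal} via the WKB ansatz $\psi=a\,e^{i\phi/\tau}$, and to verify that this ansatz turns the log-NLS into \eqref{eq:a_epsilon}. Rather than solving \eqref{eq:a_epsilon} from scratch, I would \emph{define} $a$ from the already-constructed solution: setting $\Psi(s):=\psi(\tau s;0,\psi_0e^{i\varphi/\tau})$, which is well defined and solves the time-rescaled equation $i\partial_s\Psi+\frac{\tau}{2}\Delta\Psi-\tau V\Psi=\tau\lambda\Psi\log|\Psi|^2$ by Proposition~\ref{Prop:WP}, I put
\[
a(s,\cdot):=e^{-i\phi(s,\cdot)/\tau}\,\Psi(s,\cdot),\qquad s\in[0,s^*].
\]
Since $\phi(0)=\varphi$, this gives $a(0)=e^{-i\varphi/\tau}\psi_0e^{i\varphi/\tau}=\psi_0$, and the representation identity $\psi(\tau s;0,\psi_0e^{i\varphi/\tau})=a(s)e^{i\phi(s)/\tau}$ holds by construction. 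The entire content is then to check that this $a$ solves \eqref{eq:a_epsilon}.

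Working first with $\psi_0\in\Sigma$, so that $\Psi\in L^\infty([0,\tau s^*],\Sigma)$ and hence $a\in L^\infty([0,s^*],\Sigma)$, I would substitute $\Psi=a\,e^{i\phi/\tau}$ into the rescaled equation and expand $\partial_s$ and $\Delta$ of the product. Besides the terms already present in \eqref{eq:a_epsilon}, this produces exactly the factor $-\frac{1}{\tau}\big(\partial_s\phi+\tfrac12|\nabla\phi|^2\big)a$, which vanishes precisely because $\phi$ solves the eikonal equation \eqref{eq:phi}; the surviving first-order terms $i\langle\nabla\phi,\nabla a\rangle+\tfrac{i}{2}(\Delta\phi)a$ reassemble into $i\mathcal{T}_{\nabla\phi(s)}a$ by the definition \eqref{Def:T_f} of $\mathcal{T}_f$. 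As the phase is real, $|\Psi|=|a|$ and the nonlinearity is unchanged, so one lands exactly on \eqref{eq:a_epsilon}, the identity being justified in $\Sigma^*$ (each of $\Delta a$, $Va$, $\mathcal{T}_{\nabla\phi}a$, and $a\log|a|^2$, the last via the moderate growth of the logarithm, being well defined there).

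For the estimate \eqref{estim_a_Sigma} I would avoid a fresh energy estimate on \eqref{eq:a_epsilon} and instead transfer the bound from Proposition~\ref{Prop:WP} through the pointwise relations $|a|=|\Psi|$ and $\nabla a=e^{-i\phi/\tau}\big(\nabla\Psi-\tfrac{i}{\tau}\Psi\nabla\phi\big)$. By \eqref{estim_phi}, $\|\nabla\phi(s)\|_{L^\infty}\le\|\nabla\varphi\|_{L^\infty}+Cs^*$ is bounded uniformly in $s$, whence $\|\nabla a(s)\|_{L^2}\le\|\nabla\Psi(s)\|_{L^2}+\frac{1}{\tau}(\|\nabla\varphi\|_{L^\infty}+Cs^*)\|\Psi(s)\|_{L^2}$; together with $\|xa(s)\|_{L^2}=\|x\Psi(s)\|_{L^2}$ on $\R^d$ and the bound $\|\Psi\|_{L^\infty([0,\tau s^*],\Sigma)}\le C\|\psi_0e^{i\varphi/\tau}\|_\Sigma\le C(\varphi,\tau)\|\psi_0\|_\Sigma$ from Proposition~\ref{Prop:WP}, this yields \eqref{estim_a_Sigma}. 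Finally I would pass from $\Sigma$ to general $\psi_0\in L^2$ by density: multiplication by $e^{i\varphi/\tau}$ is an $L^2$-isometry, so approximating $\psi_0$ in $\Sigma$ and invoking the Lipschitz bound \eqref{Lipschitz} shows both that the representation identity persists and that \eqref{eq:a_epsilon} holds in the limiting distributional sense, with uniqueness of its solution following from a Cazenave--Haraux estimate (Lemma~\ref{lem:CH}) exactly as in Step~2 of the proof of Proposition~\ref{Prop:WP}; here the transport operator $\mathcal{T}_{\nabla\phi}$ is harmless because it is skew-adjoint and hence invisible to the $L^2$ difference estimate.

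The step I expect to be the genuine obstacle is the rigorous justification of the WKB substitution at the available regularity: one must check that each differentiation of the product $a\,e^{i\phi/\tau}$ is legitimate in $\Sigma^*$ and, above all, that the $\tau$-singular terms proportional to $a$ cancel exactly through the eikonal equation. This cancellation, rather than any single estimate, is the heart of the matter, and it is precisely why the eikonal phase of Proposition~\ref{Prop:eikonal} is the right object to factor out.
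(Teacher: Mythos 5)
Your proposal is correct and follows essentially the same route as the paper: rescale time, define $a:=e^{-i\phi/\tau}\psi(\tau s;0,\psi_0e^{i\varphi/\tau})$, verify via the WKB substitution that the eikonal equation cancels the $\tau$-singular term and the first-order terms assemble into $i\mathcal{T}_{\nabla\phi(s)}a$, then conclude by density for $L^2$ data, with \eqref{estim_a_Sigma} transferred from Proposition~\ref{Prop:WP} and \eqref{estim_phi}. The paper's proof is just a terser version of the same argument, leaving the substitution computation and the norm transfer implicit.
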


\begin{proof}
    Let $\psi_0 \in \Sph $ and $\tau>0$. To simplify notations, we
    write $\psi(t)$ instead of $\psi(t;0,\psi_0
    e^{i\frac{\varphi}{\tau}})$. We rescale the time variable: the
    function $\xi(s,x):=\psi(\tau s,x)$ solves 
\begin{equation} \label{eq:xi_epsilon}
\begin{cases}
\left( i \partial_s+ \frac{\tau}{2} \Delta - \tau V(x) \right)\xi(s,x)
=\tau \lambda \xi \log|\xi|^2(s,x),& (s,x) \in (0,s^*) \times M, \\
\xi(0,x)=\psi_0(x) e^{i \frac{\varphi(x)}{\tau}}, &  x \in M.
\end{cases}
\end{equation}
If $\psi_0$ is smooth, then the function $a(s,x):=\xi(s,x) e^{-i\frac{\phi(s,x)}{\tau}}$ is smooth and solves \eqref{eq:a_epsilon}. If $\psi_0$ is only $L^2$, the same result holds by density. 
The estimate \eqref{estim_a_Sigma} is a consequence of Statement~1 in Proposition~\ref{Prop:WP} and \eqref{estim_phi}.
\end{proof}

\section{STC of phases}\label{sec:phases}

The goal of this section is to prove the following result.

\begin{theorem} \label{thm:phase}
    The STC of phases holds for systems \eqref{logNLS_tore} and \eqref{logNLS+}.
\end{theorem}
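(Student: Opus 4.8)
The plan is to reformulate the statement in terms of the set of attainable phases
\[
\mathcal{P}:=\{\varphi\in L^2(M,\R)\ ;\ \psi\mapsto e^{i\varphi}\psi\ \text{is}\ L^2\text{-STAR}\},
\]
and to prove that $\mathcal{P}=L^2(M,\R)$. Since the global phase $\theta$ in Definition~\ref{def:L2STAR} absorbs additive constants, it is enough to show that $\mathcal{P}$ is $L^2$-closed and contains a subspace that is dense modulo constants. The two structural tools are Lemma~\ref{lem:reachable-operators} (compositions and strong limits of STAR maps are STAR) and the representation formula of Proposition~\ref{Prop:a}. First, $\mathcal{P}$ is a convex cone: multiplication operators by phases compose additively, $e^{i\varphi_1}e^{i\varphi_2}=e^{i(\varphi_1+\varphi_2)}$, so stability under addition and under multiplication by positive scalars follows from Lemma~\ref{lem:reachable-operators}. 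Moreover $\mathcal{P}$ is closed in $L^2(M,\R)$: if $\varphi_n\to\varphi$ in $L^2$, then for each fixed $\psi_0$ dominated convergence (the integrand is bounded by $4|\psi_0|^2$) gives $e^{i\varphi_n}\psi_0\to e^{i\varphi}\psi_0$ in $L^2$, so $\psi\mapsto e^{i\varphi}\psi$ is a strong limit of STAR maps.

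The base case is $\mathrm{span}_{\R}\{W_1,\dots,W_m\}\subset\mathcal{P}$. Applying a single control potential $W_j$ with a large constant amplitude $c/\delta$ over a time interval of length $\delta$ produces, as $\delta\to0$, the phase kick $\psi_0\mapsto e^{-icW_j}\psi_0$; composing such kicks (and choosing $c$ of either sign) realizes $e^{i\varphi}$ for every $\varphi\in\mathrm{span}_\R\{W_j\}$. To justify the kick, I would write the short-time controlled evolution in the eikonal form underlying Proposition~\ref{Prop:a}, so that the dispersive and nonlinear contributions are integrated only over the vanishing time $\delta$ and are controlled, uniformly, by \eqref{estim_a_Sigma} together with \eqref{estim_phi}; the global Lipschitz bound \eqref{Lipschitz} then lets one pass from smooth initial data to arbitrary $\psi_0\in\Sph$.

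The crux is a generation step: for every at-most-linear $\varphi\in\mathcal{P}$ and every $\mu>0$, one has $-\mu|\nabla\varphi|^2\in\mathcal{P}$. Fix a small $\tau>0$ and set $s:=2\mu\tau$. I would sandwich a free evolution between two kicks: install $e^{i\varphi/\tau}$, run \eqref{eq:schro} with $u=0$ for the short time $\tau s$, and install $e^{-i\varphi/\tau}$. By Proposition~\ref{Prop:a}, the middle step sends $\psi_0e^{i\varphi/\tau}$ to $a(s)\,e^{i\phi(s)/\tau}$, where $\phi$ solves the eikonal equation \eqref{eq:phi} with $\phi(0)=\varphi$; hence the composed map is $\psi_0\mapsto a(s)\,e^{i(\phi(s)-\varphi)/\tau}$, a STAR map for each $\tau$. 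Letting $\tau\to0$, the eikonal estimate \eqref{estim_phi} together with $\partial_s\phi(0)=-\tfrac12|\nabla\varphi|^2$ gives $(\phi(s)-\varphi)/\tau=-\tfrac{s}{2\tau}|\nabla\varphi|^2+O(s^2/\tau)=-\mu|\nabla\varphi|^2+O(\tau)$, while $s=2\mu\tau\to0$ forces $a(s)\to\psi_0$ in $L^2$ (integrating \eqref{eq:a_epsilon} over $[0,s]$ yields $\|a(s)-\psi_0\|_{L^2}\le Cs$ via \eqref{estim_a_Sigma}). The decisive point is that the logarithmic nonlinearity is phase-blind, $\log|a\,e^{i\phi/\tau}|^2=\log|a|^2$, so it never enters the eikonal equation and affects $a$ only through the $O(\tau)$ term $\tau\lambda a\log|a|^2$ in \eqref{eq:a_epsilon}; this gauge invariance, combined with Lemma~\ref{lem:CH}, is what keeps the error estimates essentially as in the linear case. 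The strong limit of the composed maps is $\psi_0\mapsto e^{-i\mu|\nabla\varphi|^2}\psi_0$, hence STAR, so $-\mu|\nabla\varphi|^2\in\mathcal{P}$.

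It remains to iterate the generation step from the base family $\{W_j\}$ and invoke a saturation argument. Exactly as in the linear theory \cite{beauchard-pozzoli2,duca-nersesyan,duca-pozzoli}, repeated gradient-square operations on $\sin\langle b_j,x\rangle,\cos\langle b_j,x\rangle$ (on $\T^d$), respectively on $x_j$ and $e^{-|x|^2/2}$ (on $\R^d$), generate—modulo constants, where for each non-constant mode both signs are produced by suitable choices of the base phase—a dense subspace of $L^2(M,\R)$; with the $L^2$-closedness of $\mathcal{P}$ this gives $\mathcal{P}=L^2(M,\R)$, which is the STC of phases. The main obstacle is the uniform-in-$\tau$ control of the WKB remainder in the two dynamical steps—keeping the amplitude $a$ bounded in $\Sigma$ and close to $\psi_0$ and the phase close to the eikonal solution—together with propagating the at-most-linear regularity of $\varphi$ through the iteration (in particular tracking the growth at infinity on $\R^d$ so that Proposition~\ref{Prop:eikonal} keeps applying). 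The nonlinear term makes these energy estimates more delicate than in \cite{beauchard-pozzoli2}, though its gauge invariance confines its influence to lower order.
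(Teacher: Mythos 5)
On the torus your proposal is essentially the paper's proof: the base case of phase kicks $e^{i\sum\alpha_jW_j}$ obtained from large constant controls over vanishing time is Proposition~\ref{Prop2}, the sandwich ``kick / free evolution for time $O(\tau^2)$ / reverse kick'' producing $e^{-\frac{i}{2}|\nabla\varphi|^2}$ via the eikonal representation of Proposition~\ref{Prop:a} is Proposition~\ref{Prop3_tore}, and the iteration--density--strong-closure argument is Proposition~\ref{Prop:DN} (the paper also invokes \cite[Proposition~2.6]{duca-nersesyan} for the density of the quadratically saturated space in $L^2(\T^d,\R)$, exactly as you do). Your error analysis (Gr\"onwall with Lemma~\ref{lem:CH}, boundedness of the source term for smooth compactly supported data, then density via \eqref{Lipschitz}) is the paper's; the only overstatement is the rate $\|a(s)-\psi_0\|_{L^2}\le Cs$, which would require $H^2$ control of $a$ rather than \eqref{estim_a_Sigma}, but only the qualitative convergence $a\in C^0([0,s^*],L^2)$ is needed and that is what the paper uses.

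On $\R^d$, however, you take a genuinely different route for the saturation, and this is where there is a gap. The paper does \emph{not} iterate $\varphi\mapsto|\nabla\varphi|^2$ on $\R^d$: instead, the sandwich with the linear phase $-\alpha x_j$ produces the translation $e^{\alpha\partial_{x_j}}$ (Proposition~\ref{Prop3}), and conjugating translations by phases, $e^{i\varphi/\tau}e^{\tau\partial_{x_j}}e^{-i\varphi/\tau}\to e^{-i\partial_{x_j}\varphi}$, yields a \emph{first-order} saturation $\varphi_0-\sum_k\partial_{x_k}\varphi_k$ starting from $e^{-|x|^2/2}$, whose density (Hermite functions times a Gaussian) is precisely \cite[Lemma~5.2]{duca-pozzoli}. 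Your claim that ``repeated gradient-square operations on $x_j$ and $e^{-|x|^2/2}$ generate a dense subspace, exactly as in the linear theory'' is not covered by the references you cite: \cite{duca-pozzoli} proves density for the first-order saturation, not the quadratic one. One could try to recover first derivatives by polarization, $2\,\nabla x_j\cdot\nabla\psi=|\nabla(x_j+\psi)|^2-1-|\nabla\psi|^2$, but then the sign issue you mention in passing becomes essential: the dynamics only produces $e^{-i\mu|\nabla\varphi|^2}$ with $\mu>0$, the map $\varphi\mapsto|\nabla\varphi|^2$ is not odd, and STAR phases are a priori only a convex cone (reachability is a forward-in-time notion, so $\varphi\in\mathcal{P}$ does not give $-\varphi\in\mathcal{P}$). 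On $\T^d$ the trigonometric identities ($|\nabla\cos x_j|^2=\frac{1}{2}(1-\cos 2x_j)$, etc.) resolve this, but on $\R^d$ you give no mechanism producing both signs of the generated modes; the paper's first-order saturation is linear in $\varphi$, hence automatically sign-symmetric, which is exactly what the translation-conjugation step buys and what your proposal is missing.
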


\subsection{A key ingredient}

In the next statement, we work on the generic equation
\eqref{eq:schro}, which corresponds to either system
\eqref{logNLS_tore} (with $W_j$ bounded)  or system \eqref{logNLS+}
(with $W_j$ at most linear).

\begin{proposition} \label{Prop2}
Let  $\alpha\in \R^{m}$ and
$\varphi(x):=\sum_{j=1}^m \alpha_j W_j(x)$.
The map $\psi\mapsto e^{i\varphi}\psi$ is $L^2$-STAR.
Indeed, for every $\psi_0 \in \Sph $, one has
\begin{equation*}
  \left\| \psi \left(\tau;-\frac{\alpha}{\tau},\psi_0\right) - e^{i \varphi} \psi_0  \right\|_{L^2} \underset{\tau \to 0}{\longrightarrow} 0.
\end{equation*}
\end{proposition}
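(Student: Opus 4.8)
The plan is to compare the controlled solution $\psi(\cdot;-\alpha/\tau,\psi_0)$ with the explicit ``pure phase'' reference
\[
R_\tau(t,x) := e^{i\frac{t}{\tau}\varphi(x)}\psi_0(x),\qquad t\in[0,\tau],
\]
which solves $i\partial_t R_\tau = -\frac{\varphi}{\tau}R_\tau$ and satisfies $R_\tau(0)=\psi_0$, $R_\tau(\tau)=e^{i\varphi}\psi_0$. Since the constant control $u=-\alpha/\tau$ turns the control term of \eqref{eq:schro} into exactly $-\frac{\varphi}{\tau}\psi$, the reference $R_\tau$ solves the controlled equation up to the source term
\[
G_\tau := \tfrac12\Delta R_\tau - V R_\tau - \lambda R_\tau\log|R_\tau|^2 .
\]
First I would prove the estimate for $\psi_0$ in a dense subspace $X$ of $\Sph$ ($X=C^\infty(\T^d)$ when $M=\T^d$, and $X=\mathcal{S}(\R^d)$ when $M=\R^d$), then remove this restriction by density.

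For $\psi_0\in X$, set $w:=\psi(\cdot;-\alpha/\tau,\psi_0)-R_\tau$, so that $w(0)=0$ and $w$ solves a Schr\"odinger equation with drift $-\frac12\Delta+V-\frac{\varphi}{\tau}$, nonlinear difference $\lambda\big(\psi\log|\psi|^2-R_\tau\log|R_\tau|^2\big)$, and source $-G_\tau$. The $L^2$ energy identity reads
\[
\frac{d}{dt}\|w\|_{L^2}^2 = 2\lambda\,\IM\!\int_M \overline{w}\big(\psi\log|\psi|^2-R_\tau\log|R_\tau|^2\big)\,dx \;-\; 2\,\IM\!\int_M \overline{w}\,G_\tau\,dx,
\]
the self-adjoint contributions of $-\frac12\Delta$, $V$ and $-\frac{\varphi}{\tau}$ vanishing in the imaginary part since $V,\varphi$ are real-valued. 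Lemma~\ref{lem:CH} (with $z_2=\psi$, $z_1=R_\tau$) bounds the first integral by $2\|w\|_{L^2}^2$, and Cauchy--Schwarz bounds the second by $\|w\|_{L^2}\|G_\tau\|_{L^2}$, giving $\frac{d}{dt}\|w\|_{L^2}\le 2|\lambda|\,\|w\|_{L^2}+\|G_\tau\|_{L^2}$. Gr\"onwall's lemma and $w(0)=0$ then yield
\[
\big\|\psi(\tau;-\alpha/\tau,\psi_0)-e^{i\varphi}\psi_0\big\|_{L^2}=\|w(\tau)\|_{L^2}\le e^{2|\lambda|\tau}\int_0^\tau\|G_\tau(t)\|_{L^2}\,dt.
\]

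It remains to show $\int_0^\tau\|G_\tau(t)\|_{L^2}\,dt=O(\tau)$. The crucial point is that the phase $\frac{t}{\tau}\varphi$ stays of order one on $[0,\tau]$: differentiating $R_\tau$ produces factors $\frac{t}{\tau}\nabla\varphi$ and $\frac{t^2}{\tau^2}|\nabla\varphi|^2$, both uniformly bounded for $t/\tau\le1$, using that $\varphi=\sum_j\alpha_jW_j$ is at most linear, so $\nabla\varphi,\Delta\varphi\in L^\infty(M)$. Together with $|R_\tau|=|\psi_0|$ this gives a bound $\|G_\tau(t)\|_{L^2}\le C(\varphi,\psi_0)$ uniform in $t\in[0,\tau]$ and $\tau\le1$, whence $\|w(\tau)\|_{L^2}\le C(\varphi,\psi_0)\,\tau\,e^{2|\lambda|}\to0$. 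I expect the main technical obstacle to be precisely this uniform control of $\|G_\tau\|_{L^2}$: on $\R^d$ the term $VR_\tau$ requires $\langle x\rangle^2\psi_0\in L^2$ (as $V$ is only at most quadratic) and $\Delta R_\tau$ requires $\psi_0\in H^2$, which is exactly why the argument is first run on the dense class $X$ rather than on all of $\Sph$.

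Finally, I would pass to arbitrary $\psi_0\in\Sph$ by density. Writing $\psi_0$ as an $L^2$-limit of $(\psi_0^n)\subset X$, the Lipschitz estimate \eqref{Lipschitz} gives $\|\psi(\tau;-\alpha/\tau,\psi_0)-\psi(\tau;-\alpha/\tau,\psi_0^n)\|_{L^2}\le e^{2|\lambda|\tau}\|\psi_0-\psi_0^n\|_{L^2}$ uniformly for $\tau\le1$, while $e^{i\varphi}$ is an $L^2$-isometry; a standard three-$\varepsilon$ argument combining these two bounds with the dense-case convergence established above then yields the stated limit, and hence that $\psi\mapsto e^{i\varphi}\psi$ is $L^2$-STAR.
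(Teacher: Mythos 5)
Your proof is correct and follows essentially the same route as the paper's: you compare the controlled solution with the pure-phase ansatz $e^{it\varphi/\tau}\psi_0$, treat the discrepancy $\tfrac12\Delta R_\tau - VR_\tau - \lambda R_\tau\log|R_\tau|^2$ as a source of size $O(1)$ over a time interval of length $\tau$, control the nonlinear difference via Lemma~\ref{lem:CH}, apply Gr\"onwall, and conclude by density using \eqref{Lipschitz}. The only (cosmetic) difference is that the paper first rescales time to $s\in(0,1)$, whereas you work directly on $[0,\tau]$; the estimates are identical.
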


\begin{proof}
Thanks to \eqref{Lipschitz}, one may assume that $\psi_0 \in
C_0^\infty(M,\C)$. To simplify notations, we write $\psi(t,\cdot)$
instead of 
$\psi(t;-\frac{\alpha}{\tau},\psi_0)$, which is the solution to
\begin{equation*}
  \begin{cases}
\left( i\partial_t+ \frac{1}{2} \Delta - V(x) \right)\psi(t,x)
=\lambda \psi \log|\psi|^2(t,x)
-\frac{1}{\tau} \varphi(x) \psi(t,x),& (t,x) \in (0,\tau) \times M, \\
\psi(0,\cdot)=\psi_0. &  
  \end{cases}
\end{equation*}  
Rescale the time variable by considering  $\xi(s,x):= \psi( \tau s,
x)$, for $(s,x) \in (0,1) \times M$: it solves
\begin{equation*}
  \begin{cases}
\left( i \partial_s+ \frac{\tau}{2} \Delta - \tau V(x) \right)\xi(s,x)
=\tau \lambda \xi \log|\xi|^2(s,x)
- \varphi(x) \xi(s,x),& (s,x) \in (0,1) \times M, \\
\xi(0,\cdot)=\psi_0. &  
\end{cases}
\end{equation*}
The function $\xi_{\rm app}(s,x):=e^{i s \varphi(x)} \psi_0(x)$ solves
the following system on $(0,1)_s \times M_x$ 
\begin{equation*}
\begin{cases}
\left( i \partial_s+ \frac{\tau}{2} \Delta - \tau V(x) \right)\xi_{\rm app}(s,x)
=\tau \lambda \xi_{\rm app} \log|\xi_{\rm app}|^2(s,x)
- \varphi(x) \xi_{\rm app}(s,x) + \tau R(s,x), \\
\xi(0,\cdot)=\psi_0, 
\end{cases}
\end{equation*}
where the error source term is given by
\begin{equation*}R(s,x):=\left( \frac{1}{2} \Delta -  V(x) \right) \xi_{\rm app}  (s,x)  - \lambda \xi_{\rm app} \log|\xi_{\rm app}|^2(s,x) .\end{equation*}
Using \eqref{Cazenave} in a standard $L^2$ estimate (as recalled in
the proof of Proposition~\ref{Prop:WP}), we get, since $V$ and
$\varphi$ are real-valued, for every $s \in
(0,1)$, 
\begin{align*}
    \frac{d}{ds} \left\| (\xi-\xi_{\rm app})(s) \right\|_{L^2}^2
& \leq 4 \tau |\lambda| \left\| (\xi-\xi_{\rm app})(s) \right\|_{L^2}^2 + 2 \tau \| R(s) \|_{L^2} \|(\xi-\xi_{\rm app})(s) \|_{L^2}
\\ & \leq \tau \left( 4|\lambda|+1 \right) \left\| (\xi-\xi_{\rm app})(s) \right\|_{L^2}^2 + \tau \|R(s)\|_{L^2}^2,
\end{align*}
and Gr\"onwall lemma yields
\begin{equation*} \left\| (\xi-\xi_{\rm app})(s) \right\|_{L^2}^2 \leq
\tau \int_0^s e^{\tau(4|\lambda|+1)(s-\sigma)}\|R(\sigma)\|_{L^2}^2
d\sigma.\end{equation*}
Since $\psi_0 \in C_0^\infty(M,\C)$ and $\sigma \mapsto \sigma \ln(\sigma^2)$ is continuous on $[0,\infty)$, the function $R$ is continuous and compactly supported in $M$, thus there exists 
$C=C(\psi_0,V,\lambda,\varphi)>0$ such that, for every $s \in (0,1)$,
$\|R(s)\|_{L^2}^2 \leq C$. 
Finally  
\begin{equation*}\| \psi(\tau)-e^{i \varphi} \psi_0 \|_{L^2}^2 =
\| (\xi-\xi_{\rm app})(1) \|_{L^2}^2 \leq  \tau\, C\,
e^{\tau(4|\lambda|+1)} \underset{\tau \to 0}{\longrightarrow} 0.\end{equation*}
Note that invoking the density argument to go back to the case
$\psi_0\in L^2$, we lose the above rate of convergence in $\mathcal O(\tau)$. 
\end{proof}

\subsection{On $\T^d$}

\begin{proposition} \label{Prop3_tore}
System \eqref{logNLS_tore} satisfies the following property:
if $\varphi \in C^{\infty}(\T^d,\R)$ and $\psi_0 \in L^2(M,\C)$ then
\begin{equation*}
\left\| e^{-i\frac{\varphi}{\tau}} \psi( \tau^2 ; 0 ,
  e^{i\frac{\varphi}{\tau}} \psi_0 ) - e^{-\frac{i}{2}|\nabla
    \varphi|^2} \psi_0 \right\|_{L^2} \underset{\tau \to
  0}{\longrightarrow} 0.
\end{equation*}
\end{proposition}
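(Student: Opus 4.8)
The plan is to reduce the statement to the separate analysis of an \emph{amplitude} and a \emph{phase} extracted from the representation formula of Proposition~\ref{Prop:a}. On $\T^d$ any smooth $\varphi$ is at most linear, so for $s^*$ small enough the smallness hypothesis $s^*\|D^2\varphi\|_{L^\infty}<1$ of Proposition~\ref{Prop:eikonal} holds and the eikonal solution $\phi$ exists on $(0,s^*)$. Choosing $s=\tau$ in Proposition~\ref{Prop:a} (legitimate once $\tau<s^*$) makes the total evolution time $\tau\cdot\tau=\tau^2$, and gives
\[
e^{-i\varphi/\tau}\,\psi\bigl(\tau^2;0,e^{i\varphi/\tau}\psi_0\bigr)=a(\tau,\cdot)\,e^{i(\phi(\tau,\cdot)-\varphi)/\tau},
\]
where $a$ solves \eqref{eq:a_epsilon}. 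It then suffices to show that the phase factor $e^{i(\phi(\tau)-\varphi)/\tau}$ converges in $L^\infty$ to $e^{-i|\nabla\varphi|^2/2}$, and that the amplitude $a(\tau)$ converges in $L^2$ to $\psi_0$.

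For the phase I would use the quantitative eikonal estimate \eqref{estim_phi}. Since the eikonal equation at $s=0$ gives $\partial_s\phi(0)=-\tfrac12|\nabla\varphi|^2$, the first bound in \eqref{estim_phi} reads $\|\phi(\tau)-\varphi+\tfrac\tau2|\nabla\varphi|^2\|_{L^\infty(\T^d)}\le C\tau^2$, hence $\|\tfrac{\phi(\tau)-\varphi}{\tau}+\tfrac12|\nabla\varphi|^2\|_{L^\infty}\le C\tau$. The elementary inequality $|e^{ia}-e^{ib}|\le|a-b|$ for $a,b\in\R$ then yields $\|e^{i(\phi(\tau)-\varphi)/\tau}-e^{-\frac{i}{2}|\nabla\varphi|^2}\|_{L^\infty}\le C\tau\to0$. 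This is the clean WKB part.

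The heart of the matter, and the step I expect to be the \textbf{main obstacle}, is proving $a(\tau)\to\psi_0$ in $L^2$. I would first take $\psi_0\in\Sigma$ and run an $L^2$ energy estimate on $w:=a-\psi_0$. Rewriting \eqref{eq:a_epsilon} as $\partial_s a=-\mathcal{T}_{\nabla\phi(s)}a+\tfrac{i\tau}{2}\Delta a-i\tau Va-i\tau\lambda a\log|a|^2$ and using the conservation $\tfrac{d}{ds}\|a\|_{L^2}^2=0$, one gets $\tfrac{d}{ds}\|w\|_{L^2}^2=-2\,\RE\langle\partial_s a,\psi_0\rangle$. The decisive observation is that the transport term $\mathcal{T}_{\nabla\phi}a$, although of size $O(1)$ rather than $O(\tau)$, is paired only against the fixed $\psi_0$ and is bounded uniformly in $s$: by the uniform $W^{1,\infty}$-control of $\nabla\phi(s)$ from \eqref{estim_phi} and the bound \eqref{estim_a_Sigma}, $\|\mathcal{T}_{\nabla\phi(s)}a(s)\|_{L^2}\le C\|a\|_{L^\infty((0,s^*),\Sigma)}\le C\|\psi_0\|_\Sigma$, so $|\RE\langle\mathcal{T}_{\nabla\phi}a,\psi_0\rangle|\le C\|\psi_0\|_\Sigma^2$. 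The $\Delta$-term is handled by integrating by parts onto $\psi_0\in H^1$ (it is $O(\tau)$), and the $V$-term is trivially $O(\tau)$. The only genuinely nonlinear difficulty is the logarithmic term, where I would bound $\|a\log|a|^2\|_{L^2}$ by splitting $\{|a|\le1\}$ (there $r\mapsto r(\log r)^2$ is bounded and $\T^d$ has finite volume) and $\{|a|>1\}$ (there $(\log|a|^2)^2\le C_\delta|a|^{2\delta}$, and $H^1(\T^d)\hookrightarrow L^{2+2\delta}$ for small $\delta$), giving $\|a\log|a|^2\|_{L^2}\le C(\|\psi_0\|_\Sigma)$ uniformly in $s,\tau$. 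Collecting everything gives $\tfrac{d}{ds}\|w\|_{L^2}^2\le K(\psi_0)$ with $K$ independent of $\tau$; integrating over the \emph{short} window $[0,\tau]$ yields $\|a(\tau)-\psi_0\|_{L^2}^2\le K(\psi_0)\,\tau\to0$. The crux is precisely that the $O(1)$ transport is only integrated over a time interval of length $\tau$.

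Finally I would combine the two convergences. Since $\|a(\tau)\|_{L^2}=\|\psi_0\|_{L^2}$, the triangle inequality gives
\[
\left\|a(\tau)e^{i(\phi(\tau)-\varphi)/\tau}-e^{-\frac{i}{2}|\nabla\varphi|^2}\psi_0\right\|_{L^2}\le\|\psi_0\|_{L^2}\left\|e^{i(\phi(\tau)-\varphi)/\tau}-e^{-\frac{i}{2}|\nabla\varphi|^2}\right\|_{L^\infty}+\|a(\tau)-\psi_0\|_{L^2},
\]
which tends to $0$ for every $\psi_0\in\Sigma$. To reach arbitrary $\psi_0\in L^2$, I would use density of $\Sigma$ together with the uniform-in-$\tau$ Lipschitz bound \eqref{Lipschitz}: multiplication by unimodular factors being an isometry, the map $\psi_0\mapsto e^{-i\varphi/\tau}\psi(\tau^2;0,e^{i\varphi/\tau}\psi_0)$ is Lipschitz with constant $e^{2|\lambda|\tau^2}\le e^{2|\lambda|(s^*)^2}$, while the limit map $\psi_0\mapsto e^{-\frac{i}{2}|\nabla\varphi|^2}\psi_0$ is an $L^2$-isometry; an $\varepsilon/3$ argument then transfers the convergence from the dense set $\Sigma$ to all of $L^2(M,\C)$.
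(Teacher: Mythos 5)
Correct, and essentially the paper's own proof: the representation of Proposition~\ref{Prop:a} with $s=\tau$, the eikonal estimate \eqref{estim_phi} to handle the phase factor, convergence of the amplitude $a(\tau)\to a(0)=\psi_0$ in $L^2$, and density plus \eqref{Lipschitz} to pass from $\Sigma$ to all of $L^2$. The only divergence is that the paper dispatches the amplitude step by citing $a\in C^0([0,s^*],L^2)$, whereas you prove $\|a(\tau)-\psi_0\|_{L^2}^2\le K(\psi_0)\,\tau$ by an explicit energy estimate; since $a=a_\tau$ itself depends on $\tau$, your version usefully makes explicit the uniformity in $\tau$ that the paper's one-line appeal to continuity leaves implicit (and note you could avoid invoking the $\tau$-dependent constant of \eqref{estim_a_Sigma} for the transport term by moving $\mathcal{T}_{\nabla\phi(s)}$ onto the fixed $\psi_0\in\Sigma$ via its skew-adjointness).
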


\begin{proof}
Let $s^*>0$ be such that $s^* \| D^2\varphi \|_{L^{\infty}}<1$ then, by Proposition \ref{Prop:eikonal}, the eikonal equation \eqref{eq:phi} is well-posed on $(0,s^*) \times \T^d$. By Proposition \ref{Prop:a}, for every $\tau \in (0,s^*)$ and $\psi_0 \in L^2(\T^d,\C)$, 
\begin{equation*}
\left\| e^{-i\frac{\varphi}{\tau}} \psi( \tau^2;0,e^{i\frac{\varphi}{\tau}}\psi_0 ) - 
e^{-\frac{i}{2}|\nabla \varphi|^2} \psi_0 \right\|_{L^2} 
=
\left\| e^{-i\frac{\varphi}{\tau}} e^{i\frac{\phi(\tau)}{\tau}} a(\tau)
- 
e^{-\frac{i}{2}|\nabla \varphi|^2} \psi_0 \right\|_{L^2}
=
\|a(\tau) - e^{-i \theta(\tau)} \psi_0 \|_{L^2}
\end{equation*}
where 
\begin{equation*}
  \theta(\tau,x):=\frac{1}{\tau} \left( \phi(\tau,x)-\phi(0,x) -\tau
    \partial_s\phi(0,x) \right),
\end{equation*}
thus,
\begin{equation*}
   \left\| e^{-i\frac{\varphi}{\tau}} \psi( \tau^2;0,e^{i\frac{\varphi}{\tau}}\psi_0 ) - 
e^{-\frac{i}{2}|\nabla \varphi|^2} \psi_0 \right\|_{L^2}  \leq \| a(\tau)-a(0) \|_{L^2} + \| \theta(\tau) \|_{L^{\infty}} \|a(\tau)\|_{L^2}
\underset{\tau \to 0}{\longrightarrow} 0,
\end{equation*}
where we have used \eqref{estim_phi} and the property $a\in
C^0([0,s^*],L^2)$, which follows from Proposition~\ref{Prop:WP}. 
\end{proof}

\begin{proposition} \label{Prop:DN}
System \eqref{logNLS_tore} satisfies the following property:
for every $\varphi \in L^2(\T^d,\R)$,
the multiplication by $e^{i \varphi}$ is $L^2$-STAR.
\end{proposition}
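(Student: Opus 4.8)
The plan is to study the set
$\mathcal P := \{\varphi \in L^2(\T^d,\R) : \psi \mapsto e^{i\varphi}\psi \text{ is } L^2\text{-STAR}\}$
and to prove that $\mathcal P = L^2(\T^d,\R)$. First I would record the structural properties of $\mathcal P$ that follow directly from Lemma~\ref{lem:reachable-operators}: it contains the constants (global phases are free in Definition~\ref{def:L2STAR}); it is stable under addition, since $e^{i(\varphi_1+\varphi_2)} = e^{i\varphi_1}e^{i\varphi_2}$ is a composition of STAR maps; and it is closed in $L^2$, because $\|\varphi_n-\varphi\|_{L^2}\to 0$ forces $e^{i\varphi_n}\psi\to e^{i\varphi}\psi$ in $L^2$ for every $\psi$ (dominated convergence along a.e.-convergent subsequences, using $|e^{i\varphi_n}-e^{i\varphi}|\le 2$), so the multiplication $M_\varphi$ is a strong limit of STAR maps. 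Finally, Proposition~\ref{Prop2} gives $\mathrm{span}_\R\{\sin\langle b_j,\cdot\rangle,\cos\langle b_j,\cdot\rangle : 1\le j\le d\}\subset\mathcal P$. Since $\mathcal P$ is closed, it then suffices to prove that it is dense, i.e. that every Fourier mode $\cos\langle k,\cdot\rangle,\sin\langle k,\cdot\rangle$ with $k\in\Z^d$ lies in $\mathcal P$ with an arbitrary real coefficient.

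The engine is a bootstrap that turns Proposition~\ref{Prop3_tore} into new elements of $\mathcal P$. For a smooth $\varphi$ with $\pm\varphi/\tau\in\mathcal P$ (for small $\tau$), I claim $-\tfrac12|\nabla\varphi|^2\in\mathcal P$. To see this, I concatenate three controlled arcs: an approximate realization of the multiplication $e^{i\varphi/\tau}$ (Proposition~\ref{Prop2}, in arbitrarily small time), a free evolution ($u=0$) of duration $\tau^2$, and an approximate realization of $e^{-i\varphi/\tau}$ (Proposition~\ref{Prop2} again). The total duration tends to $0$, the output tends to $e^{-\frac{i}{2}|\nabla\varphi|^2}\psi_0$ by Proposition~\ref{Prop3_tore}, and the errors of the two approximate phase multiplications are transported through the subsequent arcs by the global Lipschitz estimate~\eqref{Lipschitz}; choosing first $\tau$ and then the arc durations small enough verifies Definition~\ref{def:L2STAR} for $M_{-\frac12|\nabla\varphi|^2}$. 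Two features supply the needed flexibility: the identity $-\tfrac12|\nabla(c\varphi)|^2 = c^2\,(-\tfrac12|\nabla\varphi|^2)$ together with a pre-scaling of the (already reachable) input yields all \emph{positive} multiples, while feeding the bootstrap a $\sin$- rather than a $\cos$-type generator flips the sign of the produced mode (replacing $\nabla\cos$ by $\nabla\sin$ interchanges $\sin^2$ and $\cos^2$, i.e. $+\cos 2\langle k,\cdot\rangle$ and $-\cos 2\langle k,\cdot\rangle$), so each new mode is obtained with \emph{both} signs.

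With this, I would run an induction on the lattice. Call $k\in\Z^d$ \emph{reachable} if $c\cos\langle k,\cdot\rangle$ and $c\sin\langle k,\cdot\rangle$ belong to $\mathcal P$ for every $c\in\R$. The base case ($k=0$ and $k=b_1,\dots,b_d$) holds by the previous paragraph. For the inductive step, if $k,k'$ are reachable then so is $2k$ (the single-generator and $\cos/\sin$-mixed bootstrap at $k$ has all its outputs at frequency $2k$, with both signs); and if moreover $\langle k,k'\rangle\ne 0$, then $k\pm k'$ are reachable: applying the bootstrap to $a\,(\cos\text{ or }\sin)\langle k,\cdot\rangle + a'\,(\cos\text{ or }\sin)\langle k',\cdot\rangle$ produces a cross term $\propto aa'\langle k,k'\rangle$ carrying the frequencies $k\pm k'$, the parasitic $2k$ and $2k'$ contributions are removed by adding the appropriate multiples of the (now reachable) doubled modes, and the frequencies $k+k'$ and $k-k'$ are separated by combining two admissible sign choices of $(a,a')$. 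Since $a,a'\in\R$ are free and $\langle k,k'\rangle\ne 0$, every real coefficient is attained. As $b_1=e_1,\dots,b_{d-1}=e_{d-1}$ and $b_d=(1,\dots,1)$ generate $\Z^d$, iterating these steps makes every $k\in\Z^d$ reachable; hence all trigonometric polynomials lie in $\mathcal P$, and by density and closedness $\mathcal P=L^2(\T^d,\R)$.

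I expect the main obstacle to be the combinatorial saturation in the last step: one must exhibit, for each target $k$, a path from the generators to $k$ using only steps $k,k'\mapsto k\pm k'$ with $\langle k,k'\rangle\ne 0$ (so that the coupling does not vanish), while ensuring that the parasitic modes created along the way stay reachable and can be cancelled. This is precisely where the choice~\eqref{Def:ej} enters: the vector $b_d=(1,\dots,1)$ is non-orthogonal to every $b_i$, which is what keeps the coupling $\langle\cdot,\cdot\rangle$ alive (mirroring the role of the tensorization-breaking potential $\cos\langle b_d,x\rangle$), and the lattice-generation needed here is exactly the computation underlying the torus case of Theorem~\ref{thm:gradient-algebra}. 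A secondary, more routine difficulty is the bookkeeping of the error propagation in the three-arc bootstrap, which is nonetheless controlled uniformly in the control by~\eqref{Lipschitz}.
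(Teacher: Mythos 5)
Your argument is essentially the paper's own proof: the same three ingredients (Proposition~\ref{Prop2} for the generating phases, Proposition~\ref{Prop3_tore} composed and passed to the strong limit via Lemma~\ref{lem:reachable-operators} to produce $-\tfrac12|\nabla\varphi|^2$, then density of the resulting trigonometric polynomials plus dominated convergence to close up in $L^2$). The only difference is that the paper delegates the lattice saturation argument to the proof of \cite[Proposition~2.6]{duca-nersesyan}, whereas you sketch that combinatorial step explicitly; your sketch is consistent with it.
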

\begin{proof}
\emph{Step 1: A density result.}
We define recursively an increasing sequence of vector spaces:
\begin{equation*}
 \mathcal{H}_0  :=\text{span}_{\R}\{1,\sin \langle b_j,x\rangle, \cos \langle b_j , x \rangle; j \in \{1,\dots,d\} \},  
\end{equation*}
and $\mathcal{H}_j$ for $j \in \N^*$ as the largest vector space whose
elements can be written as
\begin{equation*}
  \varphi_0-\sum_{k=1}^N|\nabla\varphi_k|^2 ; N\in\N,
  \varphi_0,\dots,\varphi_N\in \mathcal{H}_{j-1}.  
\end{equation*}
By Proposition~\ref{Prop2} and Lemma~\ref{lem:reachable-operators},
for every $\varphi \in \mathcal{H}_0$, the multiplication by $e^{i
  \varphi}$ is $L^2$-STAR. By Proposition~\ref{Prop3_tore} and
Lemma~\ref{lem:reachable-operators}, for every $\varphi \in 
\mathcal{H}_{\infty} := \cup_{j\in\N} \mathcal{H}_j$, the operator
$e^{i \varphi}$ is $L^2$-STAR. Moreover, the proof of
\cite[Proposition 2.6]{duca-nersesyan} shows that
$\mathcal{H}_{\infty}$ contains any trigonometric polynomial. In particular, $\mathcal{H}_{\infty}$ is dense in $L^2(\T^d,\R)$.

\medskip

\noindent \emph{Step 2: Conclusion.}
Let $\varphi \in L^2(\T^d,\R)$. There exists $(\varphi_n)_{n\in\N} \subset \mathcal{H}_{\infty}$ such that $\| \varphi_n -\varphi \|_{L^2} \rightarrow 0 $ as $n \to \infty$. Up to an extraction, one may assume that $\varphi_n \rightarrow  \varphi$ almost everywhere on $\T^d$, as $n \to \infty$. The dominated convergence theorem proves that, for every $\psi \in L^2(\T^d,\C)$,
$\| (e^{i \varphi_n}-e^{i\varphi}) \psi \|_{L^2} \rightarrow 0$ as $n \to \infty$. Finally, Step 1 and Lemma \ref{lem:reachable-operators} prove that the operator $e^{i\varphi}$ is $L^2$-STAR.
\end{proof}

\subsection{On $\R^d$}

\begin{proposition} \label{Prop3}
System \eqref{logNLS+} satisfies the following property:
for every $j \in \{1,\dots,d\}$, $\alpha\in\R$ and $\psi_0 \in \Sph $
then
\begin{equation*}
 \left\| e^{\frac{i}{\tau}\left( \frac{\alpha^2}{2} + \alpha x_j \right)}  \psi \left( \tau ; 0 , e^{- \frac{i}{\tau} \alpha x_j} \psi_0 \right) - e^{\alpha \partial_{x_j}} \psi_0 \right\|_{L^2} \underset{\tau \to 0}{\longrightarrow} 0 
\end{equation*}
\end{proposition}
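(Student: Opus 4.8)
The plan is to combine the representation formula of Proposition~\ref{Prop:a} with the (essentially trivial) eikonal analysis for a \emph{linear} initial phase, and then run an $L^2$ energy estimate in the spirit of Proposition~\ref{Prop2}; the novelty compared to Proposition~\ref{Prop3_tore} is that here we evaluate at $s=1$, so the limiting object is a genuine space-translation rather than a mere phase. First, by the Lipschitz estimate \eqref{Lipschitz} together with the fact that the phase multiplications $e^{\pm i(\cdots)/\tau}$ and the translation $e^{\alpha\d_{x_j}}$ are all $L^2$-isometries, it suffices to prove the convergence for $\psi_0\in C_0^\infty(\R^d,\C)$, the general case following by a density argument uniform in $\tau\in(0,1]$. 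For such $\psi_0$, set $\varphi(x):=-\alpha x_j$, which is at most linear with $D^2\varphi\equiv 0$, so Proposition~\ref{Prop:eikonal} applies for every $s^*>0$ and the eikonal equation is solved explicitly by $\phi(s,x)=-\alpha x_j-\tfrac{s\alpha^2}{2}$, with constant gradient $\nabla\phi(s)\equiv -\alpha e_j$ ($e_j$ the $j$-th vector of the canonical basis). Applying Proposition~\ref{Prop:a} with $\tau s=\tau$, i.e. $s=1$, gives $\psi(\tau;0,e^{-i\alpha x_j/\tau}\psi_0)=a(1,\cdot)\,e^{i\phi(1,\cdot)/\tau}$, and since $\phi(1,x)=-\alpha x_j-\tfrac{\alpha^2}{2}$ the prefactor $e^{\frac{i}{\tau}(\frac{\alpha^2}{2}+\alpha x_j)}$ cancels $e^{i\phi(1)/\tau}$ exactly. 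Thus the quantity to estimate reduces to $\|a(1,\cdot)-e^{\alpha\d_{x_j}}\psi_0\|_{L^2}$, where $a$ solves \eqref{eq:a_epsilon}.

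Second, I would identify the natural approximate solution. Since $\nabla\phi(s)\equiv -\alpha e_j$ is divergence-free, one has $\mathcal{T}_{\nabla\phi(s)}a=-\alpha\d_{x_j}a$, so \eqref{eq:a_epsilon} reads $i(\d_s a-\alpha\d_{x_j}a)=\tau(-\tfrac12\Delta a+Va+\lambda a\log|a|^2)$; formally sending $\tau\to0$ leaves the transport equation $\d_s a=\alpha\d_{x_j}a$ with $a(0)=\psi_0$, whose solution is $a_{\rm app}(s,x):=\psi_0(x+s\alpha e_j)$. In particular $a_{\rm app}(1,\cdot)=\psi_0(\cdot+\alpha e_j)=e^{\alpha\d_{x_j}}\psi_0$, which is precisely the target. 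A direct substitution shows that $a_{\rm app}$ solves \eqref{eq:a_epsilon} up to the source term $-\tau E$, where $E(s,x):=\tfrac12\Delta a_{\rm app}-V a_{\rm app}-\lambda a_{\rm app}\log|a_{\rm app}|^2$ is independent of $\tau$. For $\psi_0\in C_0^\infty$ the function $a_{\rm app}(s,\cdot)$ is smooth with support contained, for $s\in[0,1]$, in a fixed compact set, so that $\sup_{s\in[0,1]}\|E(s)\|_{L^2}=:C(\psi_0,V,\lambda,\alpha)<\infty$; here the at most quadratic growth of $V$ is harmless because it is tested against a compactly supported factor, and $z\mapsto z\log|z|^2$ is continuous and vanishes at the origin.

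Finally, I would run the standard $L^2$ estimate (as recalled in the proof of Proposition~\ref{Prop:WP}) on $w:=a-a_{\rm app}$, which satisfies \eqref{eq:a_epsilon} with right-hand side $\tau\lambda(a\log|a|^2-a_{\rm app}\log|a_{\rm app}|^2)-\tau E$ and $w(0)=0$. Multiplying by $\overline w$, integrating over $\R^d$ and taking imaginary parts, the terms $\tfrac{i\tau}{2}\Delta w$ and $i\mathcal{T}_{\nabla\phi}w$ drop out by skew-adjointness and $-\tau V w$ drops out since $V$ is real-valued, leaving $\tfrac12\frac{d}{ds}\|w\|_{L^2}^2$ on the left. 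On the right, the logarithmic difference is controlled \emph{pointwise} by Lemma~\ref{lem:CH} (with $z_1=a_{\rm app}$, $z_2=a$), contributing at most $2\tau|\lambda|\,\|w\|_{L^2}^2$, while the error term is bounded by $\tau\|E\|_{L^2}\|w\|_{L^2}$. After a Young inequality and Gr\"onwall's lemma this yields $\|w(1)\|_{L^2}^2\le \tau\,C\,e^{\tau(4|\lambda|+1)}\to0$ as $\tau\to0$, which is the claim. The main obstacle, and the only place where the nonlinearity genuinely enters, is exactly the control of $a\log|a|^2-a_{\rm app}\log|a_{\rm app}|^2$ in this energy identity: the non-Lipschitz character of the logarithmic nonlinearity is what forces the use of the Cazenave--Haraux inequality \eqref{Cazenave} in place of a naive Lipschitz bound, and the reduction to smooth compactly supported data beforehand so that the residual $E$ is uniformly bounded.
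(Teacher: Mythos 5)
Your proposal is correct and follows essentially the same route as the paper: reduce to $\psi_0\in C_0^\infty$ by \eqref{Lipschitz}, solve the eikonal equation explicitly for the linear phase $\varphi=-\alpha x_j$, apply Proposition~\ref{Prop:a} to reduce the claim to $\|a(1)-\psi_0(\cdot+\alpha e_j)\|_{L^2}\to0$, and compare $a$ with the translated datum via the standard $L^2$ energy estimate using Lemma~\ref{lem:CH} and Gr\"onwall, with a uniformly bounded residual. The paper's proof is the same argument with your $a_{\rm app}$ denoted $\widetilde a$ and your $E$ denoted $R$.
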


\begin{proof}
For $\varphi(x):=-\alpha x_j$, the solution of the eikonal equation
\eqref{eq:phi} is $\phi(s,x) := -\alpha x_j-\frac{\alpha^2}{2}s$ for
every $(s,x) \in \R\times\R^d$. Thus, by Proposition \ref{Prop:a}, for
every $\psi_0 \in L^2(\R^d)$,
\begin{equation*}
  \left\|  e^{\frac{i}{\tau}\left( \frac{\alpha^2}{2} + \alpha x_j \right)}  \psi \left( \tau ; 0 , e^{- \frac{i}{\tau} \alpha x_j} \psi_0 \right) - 
e^{\alpha \partial_{x_j}} \psi_0 \right\|_{L^2} 
 =
\left\| (a - \widetilde{a})(1) \right\|_{L^2}, 
\end{equation*}
where $\widetilde{a}(s,x):=\psi_0(x+\alpha s e_j )$.
By \eqref{Lipschitz}, one may assume that $\psi_0\in
C_0^\infty(\R^d;\C)$. Then, 
the function $\widetilde{a}$ solves
\begin{equation} 
\begin{cases}
\left( i \partial_s  + \frac{\tau}{2} \Delta - \tau V \right) \widetilde{a} + i \nabla \phi \cdot \nabla \widetilde{a}   = \tau \lambda \widetilde{a} \log |\widetilde{a}|^2 + \tau R(s,x), & (s,x) \in (0,1)\times \R^d, \\
\widetilde{a}(0,\cdot)=\psi_0, & 
\end{cases}
\end{equation}
where the error source term is given by
\begin{equation*}
R(s,x):= \frac{1}{2} \Delta \widetilde{a} - V \widetilde{a} -
\lambda \widetilde{a} \log|\widetilde{a}|^2 .
\end{equation*}
Proceeding like in the proof of Proposition~\ref{Prop2}, we find 
\begin{equation*}
  \frac{d}{ds} \left\| (a-\widetilde{a})(s) \right\|_{L^2}^2 \leq \tau
(4|\lambda|+1) \left\| (a-\widetilde{a})(s) \right\|_{L^2}^2 +  \tau
\|R(s)\|_{L^2}^2.
\end{equation*}
In view of the expression of $\widetilde a$, there exists $C=C(\psi_0,
\alpha,V,\lambda)>0$ such that, for every $s \in [0,1]$,
$\|R(s)\|_{L^2}^2 \leq C$ and then
\begin{equation*}
 \left\| (a-\widetilde{a})(1) \right\|_{L^2}^2 \leq \tau\, C\,
e^{\tau (4|\lambda|+1)} \underset{\tau \to 0}{\longrightarrow} 0. 
\end{equation*}
Like in the proof of Proposition~\ref{Prop2}, by density and \eqref{Lipschitz}, the above $\mathcal O(\tau)$
convergence becomes an $o(1)$ convergence. 
\end{proof}

\begin{proposition} \label{Prop:DP}
System \eqref{logNLS+} satisfies the following properties:
\begin{itemize}
\item For every $j \in \{1,\dots,d\}$ and $\alpha \in \R$, the operator $e^{\alpha \partial_{x_j}}$ is $L^2$-STAR,
\item For every $\varphi \in L^2 (\R^d,\R)$, the operator
$e^{i \varphi}$ is $L^2$-STAR.
\end{itemize}
\end{proposition}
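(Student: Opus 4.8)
The plan is to establish the two items of Proposition~\ref{Prop:DP} by building up, via the semigroup structure of $L^2$-STAR maps (Lemma~\ref{lem:reachable-operators}), from the elementary reachable maps already produced. Both items will follow from combining Propositions~\ref{Prop2} and~\ref{Prop3} with density arguments analogous to those used on $\T^d$ in Proposition~\ref{Prop:DN}.

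For the first item, I would fix $j$ and $\alpha$ and observe that Proposition~\ref{Prop3} already gives the map $e^{\alpha\partial_{x_j}}$ (the spatial translation by $\alpha e_j$) as a strong limit of genuine trajectories of \eqref{logNLS+}, since its left-hand side is exactly of the form $\psi(\tau;u,\cdot)$ postcomposed with multiplication by the phase $e^{\frac{i}{\tau}(\frac{\alpha^2}{2}+\alpha x_j)}$. More precisely, I would write $e^{\alpha\partial_{x_j}}$ as the strong limit, as $\tau\to 0$, of the composition of: (i) multiplication by $e^{-\frac{i}{\tau}\alpha x_j}$, which is $L^2$-STAR by Proposition~\ref{Prop2} (with $\varphi=-\alpha x_j=\alpha_j W_j$, a linear control potential); (ii) the free-flow evolution $\psi(\tau;0,\cdot)$, manifestly $L^2$-STAR with the zero control; and (iii) multiplication by the phase $e^{\frac{i}{\tau}(\frac{\alpha^2}{2}+\alpha x_j)}$, again $L^2$-STAR by Proposition~\ref{Prop2}. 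Since the composition of $L^2$-STAR maps is $L^2$-STAR and the strong limit of $L^2$-STAR maps is $L^2$-STAR (Lemma~\ref{lem:reachable-operators}), Proposition~\ref{Prop3} shows $e^{\alpha\partial_{x_j}}$ is the strong limit of such compositions, hence is itself $L^2$-STAR.

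For the second item, I would mimic Step~1 of Proposition~\ref{Prop:DN}, setting up a recursively defined increasing family of real subspaces $\mathcal H_j\subset C^\infty(\R^d,\R)$ starting from the span of the available control phases $\{1,x_1,\dots,x_d,e^{-|x|^2/2}\}$ and closing under the operations $(\varphi_0,\dots,\varphi_N)\mapsto \varphi_0-\tfrac12\sum_k|\nabla\varphi_k|^2$ that are realized as $L^2$-STAR by Proposition~\ref{Prop2} together with an $\R^d$-analogue of Proposition~\ref{Prop3_tore}. Here the linear potentials produce constant gradients (hence quadratic phases through the eikonal correction), while $e^{-|x|^2/2}$ feeds the Gaussian-type generators needed to match the vector fields listed for $\R^d$ in Theorem~\ref{thm:gradient-algebra}. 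The density argument of \cite[Proposition 2.6]{duca-nersesyan}, or its $\R^d$ counterpart in \cite{duca-pozzoli}, should then show $\mathcal H_\infty:=\cup_j\mathcal H_j$ is dense in $L^2(\R^d,\R)$. Finally, exactly as in Step~2 of Proposition~\ref{Prop:DN}, given arbitrary $\varphi\in L^2(\R^d,\R)$ I would approximate it in $L^2$ by $\varphi_n\in\mathcal H_\infty$, pass to an a.e.-convergent subsequence, invoke dominated convergence to get $e^{i\varphi_n}\to e^{i\varphi}$ strongly on $L^2(\R^d,\C)$, and conclude with Lemma~\ref{lem:reachable-operators}.

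\textbf{The main obstacle} I anticipate is the density statement $\overline{\mathcal H_\infty}=L^2(\R^d,\R)$ in the $\R^d$ setting: on the torus this rests on the ready availability of all trigonometric polynomials via the saturation argument of \cite{duca-nersesyan}, whereas on $\R^d$ one must check that iterating the gradient-squaring operation starting from $\{x_j, e^{-|x|^2/2}\}$ genuinely generates a dense set, which is the content of the $\R^d$ saturation result of \cite{duca-pozzoli}. A secondary technical point is verifying the $\R^d$-version of Proposition~\ref{Prop3_tore}, namely that the phase-conjugated time-$\tau^2$ evolution converges to multiplication by $e^{-\frac{i}{2}|\nabla\varphi|^2}$; this requires the eikonal analysis of Proposition~\ref{Prop:eikonal} to apply to the relevant at-most-linear phases and the $\Sigma$-estimate \eqref{estim_a_Sigma} from Proposition~\ref{Prop:a} to control the WKB amplitude, but the argument should transfer verbatim since Propositions~\ref{Prop:eikonal} and~\ref{Prop:a} are already stated uniformly for $M=\R^d$.
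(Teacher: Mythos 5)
Your treatment of the first item is essentially the paper's: Proposition~\ref{Prop3} sandwiches a zero-control trajectory of duration $\tau$ between two phase multiplications that are $L^2$-STAR by Proposition~\ref{Prop2}, and the strong limit as $\tau\to 0$ is $e^{\alpha\partial_{x_j}}$. One small imprecision: for fixed $\tau>0$ the map $\psi(\tau;0,\cdot)$ is not itself $L^2$-STAR (it takes time exactly $\tau$); what makes the argument work is that the total duration of the composition shrinks with $\tau$, so the limit object is reachable in arbitrarily small time. This is how Lemma~\ref{lem:reachable-operators} is meant to be applied, and your conclusion stands.

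For the second item, however, your route diverges from the paper's and has a genuine gap exactly at the point you flag as the ``main obstacle.'' You propose to saturate via the gradient-squaring operation $\varphi\mapsto\varphi_0-\tfrac12\sum_k|\nabla\varphi_k|^2$ (the $\R^d$ analogue of Proposition~\ref{Prop3_tore}, which indeed transfers since Propositions~\ref{Prop:eikonal} and~\ref{Prop:a} are stated for $M=\R^d$), starting from $\{1,x_1,\dots,x_d,e^{-|x|^2/2}\}$, and you defer the density of the resulting $\mathcal H_\infty$ to \cite{duca-pozzoli}. But the density result in \cite[Lemma~5.2]{duca-pozzoli} concerns a different family: the closure of $\operatorname{span}\{e^{-|x|^2/2}\}$ under the operations $\varphi\mapsto\partial_{x_k}\varphi$, which contains all Hermite functions and is therefore dense. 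It does not cover the gradient-squaring closure, and you give no argument that the latter is dense, so as written the proof is incomplete. The paper avoids this by using the first item as an ingredient in the second: conjugating the translation $e^{\tau\partial_{x_j}}$ by the phases $e^{\pm i\varphi/\tau}$ yields, via the method of characteristics, the map $\psi\mapsto\psi(\cdot+\tau e_j)\,e^{-i(\varphi(\cdot+\tau e_j)-\varphi(\cdot))/\tau}$, whose strong limit as $\tau\to 0$ is multiplication by $e^{-i\partial_{x_j}\varphi}$; iterating this derivative operation on $e^{-|x|^2/2}$ produces the Hermite functions and lets one invoke \cite{duca-pozzoli} directly. (Your route could be repaired without translations by polarizing $|\nabla(\varphi+a\cdot x)|^2-|\nabla\varphi|^2-|a|^2=2\,a\cdot\nabla\varphi$ to recover directional derivatives from the gradient-squaring operation and the linear phases, but this observation, or some substitute density proof, is needed and is missing from your proposal.)
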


\begin{proof}
\noindent \emph{Step 1: We prove that, for every $\varphi \in
\operatorname{span}\{ x_1,\dots,x_d, e^{-|x|^2/2} \}$, the operator $e^{i \varphi}$ is $L^2$-STAR.} This is a consequence of Proposition \ref{Prop2} and Lemma \ref{lem:reachable-operators}.

\medskip

\noindent \emph{Step 2: We prove that, for every $j \in \{1,\dots,d\}$ and $\alpha \in \R$, the operator $e^{\alpha \partial_{x_j}}$ is $L^2$-STAR.} This is a consequence of Proposition \ref{Prop3} and Lemma \ref{lem:reachable-operators}.

\medskip

\noindent \emph{Step 3: We prove that, if $\varphi \in C^1(\R^d,\R)$ and $e^{i c \varphi}$ is $L^2$-STAR for every $c \in \R$ then the operator $e^{-i \partial_{x_j} \varphi}$ is $L^2$-STAR.} Let $\tau>0$. The assumption on $\varphi$, Step 2 and Lemma \ref{lem:reachable-operators} prove that the map
\begin{equation*}
  \widetilde{L}_{\tau} :=
  e^{i\frac{\varphi}{\tau}}e^{\tau\partial_{x_j}}e^{-i\frac{\varphi}{\tau}}
\end{equation*}  
is $L^2$-STAR. The method of characteristics proves that, for every
$\psi \in L^2(\R^d,\C)$,
\begin{equation*}
  \widetilde{L}_{\tau} \psi  : x \in \R^d \mapsto \psi(x+\tau e_j) e^{-i \frac{\varphi(x+\tau e_j)-\varphi(x)}{\tau} }
 \in \C.
\end{equation*}
The continuity of the translation on $L^2(\R^d,\C)$ and the dominated convergence theorem prove that, for every $\psi \in L^2(\R^d,\C)$,
$\| (\widetilde{L}_{\tau}-e^{-i\partial_{x_j}\varphi}) \psi \|_{L^2} \to 0$ as $\tau \to 0$. Finally, by Lemma \ref{lem:reachable-operators}, 
the map $e^{-i \partial_{x_j} \varphi}$ is $L^2$-STAR.

\medskip

\noindent \emph{Step 4: Iteration.} We define recursively an increasing sequence
of vector spaces by
\begin{equation*}
  \mathcal{H}_0 : =\text{span}\{ e^{-|x|^2/2} \} \quad \text{ and } \quad
\mathcal{H}_j := \text{span}_{\R} \left\{  \varphi_0-\sum_{k=1}^d \partial_{x_k} \varphi_k ;  \varphi_0,\dots,\varphi_d \in \mathcal{H}_{j-1} \right\} \text{ for } j \in \N^*.
\end{equation*}
Thanks to Lemma \ref{lem:reachable-operators}, Steps 1 and 3,
for every $\varphi \in \mathcal{H}_{\infty} := \cup_{j\in\N} \mathcal{H}_j$, the operator $e^{i \varphi}$ is $L^2$-STAR. Moreover, by the proof of \cite[Lemma 5.2]{duca-pozzoli}, $\mathcal{H}_{\infty}$ is dense in $L^2(\R^d,\C)$ because it contains the linear combinations of Hermite functions.

\medskip

\noindent \emph{Step 5: Conclusion.} Let $\varphi \in L^2 (\R^d,\R)$. There exists $(\varphi_n)_{n\in\N} \subset \mathcal{H}_{\infty}$ such that $\| \varphi_n -\varphi \|_{L^2} \rightarrow 0 $ as $n \to \infty$. Up to an extraction, one may assume that $\varphi_n \rightarrow  \varphi$ almost everywhere on $\R^d$, as $n \to \infty$. The dominated convergence theorem proves that, for every $\psi \in L^2(\R^d,\C)$,
$\| (e^{i \varphi_n}-e^{i\varphi}) \psi \|_{L^2} \rightarrow 0$ as $n \to \infty$. Finally, Step 4 and Lemma \ref{lem:reachable-operators} prove that the operator $e^{i\varphi}$ is $L^2$-STAR.
\end{proof}

\section{STC of flows of gradient vector fields}\label{sec:gradient-flows}

\subsection{A key ingredient}

We denote by $\mathcal{R}(t;\tau,\phi)\psi_0$ the solution at time $t$
(when well-defined) of the equation 
\begin{equation}\label{logNLS+transp}
\begin{cases}
\left( i\partial_t+ \frac{\tau}{2} \Delta - \tau V(x) + i \mathcal{T}_{\nabla \phi} \right)\psi(t,x)
=\tau \lambda \psi \log|\psi|^2(t,x),& (t,x) \in (0,T) \times M, \\
\psi(0,\cdot)=\psi_0. &  
\end{cases}
\end{equation}
Note that $\psi_0 \mapsto \mathcal{R}(t;\tau,\phi) \psi_0$ is a nonlinear map. For instance the solution $\psi$ of \eqref{eq:schro} with $u=0$ is $\psi(t)=\mathcal{R}(t;1,0)\psi_0$ and the solution $a$ of \eqref{eq:a_epsilon} is $a(s)=\mathcal{R}(s;\tau,\phi) \psi_0$.

\begin{proposition} \label{Prop:Trotter-Kato}
For $\varphi \in C^{\infty}(M,\R)$ at most linear, $\tau>0$ and
$\psi_0 \in L^2(M,\C)$ then
\begin{equation*}
  \left\|
(\frak{B}_n)^n \psi_0
-
\mathcal{R}\left( 1 ; \tau ,  \varphi \right) \psi_0
\right\|_{L^2} \underset{n \to \infty}{\longrightarrow} 0
\quad \text{ where } \quad 
\frak{B}_n := e^{i \frac{|\nabla \varphi|^2}{2 n\tau}  }
e^{-i  \frac{\varphi}{\tau} }
\mathcal{R}\left(\frac{\tau}{n};1,0\right)
e^{-i \frac{\varphi}{\tau} }.
\end{equation*}
\end{proposition}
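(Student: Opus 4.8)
The plan is to reduce the statement to a one-step comparison followed by a Trotter-type telescoping. Write $G_n:=\mathcal{R}(1/n;\tau,\varphi)$ for the flow of the \emph{autonomous} equation \eqref{logNLS+transp} with frozen phase $\phi=\varphi$. By the uniqueness obtained exactly as in Proposition~\ref{Prop:WP} (the transport term $i\mathcal{T}_{\nabla\varphi}$ is skew-adjoint on $L^2$, so it does not affect the Cazenave--Haraux estimate), this flow has the group property and $(G_n)^n=\mathcal{R}(1;\tau,\varphi)$ identically. It therefore suffices to show that the single factor $\frak{B}_n$ approximates $G_n$ sharply enough that the $n$ accumulated errors still vanish as $n\to\infty$.

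First I would establish an exact WKB representation of one factor. Fix $n$ so large that $1/n<s^*$, where $s^*\|D^2\varphi\|_{L^\infty}<1$, and let $\phi$ be the eikonal solution of \eqref{eq:phi} with $\phi(0)=\varphi$ from Proposition~\ref{Prop:eikonal}. Applying the identity of Proposition~\ref{Prop:a} over the single step of rescaled length $1/n$ (so that $\mathcal{R}(\tau/n;1,0)$ acts) turns the phase-conjugated free flow into the transport--log-NLS flow $\mathcal{R}(1/n;\tau,\phi)$ times the eikonal phase $e^{i\phi(1/n)/\tau}$. Since $\partial_s\phi(0)=-\tfrac12|\nabla\varphi|^2$, the second-order bound \eqref{estim_phi} gives $\phi(1/n)=\varphi-\tfrac{1}{2n}|\nabla\varphi|^2+r_n$ with $\|r_n\|_{L^\infty}\le C/n^2$ and $\|\nabla r_n\|_{L^\infty}\le C/n$; the explicit prefactors $e^{\mp i\varphi/\tau}$ and $e^{i|\nabla\varphi|^2/(2n\tau)}$ are designed precisely to cancel the $\varphi$ and $\tfrac{1}{2n}|\nabla\varphi|^2$ contributions. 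This produces the clean identity
\[
\frak{B}_n\psi=e^{ir_n/\tau}\,\mathcal{R}(1/n;\tau,\phi)\psi,
\]
which is the nonlinear-WKB substitute for the semigroup algebra of the linear case.

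Next I would bound the one-step discrepancy $\|\frak{B}_n\psi-G_n\psi\|_{L^2}$ for $\psi\in\Sigma$. The residual phase contributes at most $\tau^{-1}\|r_n\|_{L^\infty}\|\psi\|_{L^2}\le C n^{-2}\|\psi\|_{L^2}$. For the remaining term $\|\mathcal{R}(1/n;\tau,\phi)\psi-\mathcal{R}(1/n;\tau,\varphi)\psi\|_{L^2}$ I would set $w:=\mathcal{R}(\cdot;\tau,\phi)\psi-\mathcal{R}(\cdot;\tau,\varphi)\psi$ and run the $L^2$ energy estimate on $[0,1/n]$: the Laplacian, potential and transport terms are skew-adjoint and drop out, the logarithmic difference is controlled by $4\tau|\lambda|\|w\|_{L^2}^2$ via \eqref{Cazenave}, and the genuine source $(\mathcal{T}_{\nabla\phi(s)}-\mathcal{T}_{\nabla\varphi})\mathcal{R}(s;\tau,\varphi)\psi$ is bounded by $Cs\,\|\mathcal{R}(s;\tau,\varphi)\psi\|_{H^1}\le Cs\,\|\psi\|_{\Sigma}$, using $\|\nabla\phi(s)-\nabla\varphi\|_{W^{1,\infty}}\le Cs$ from \eqref{estim_phi} together with the $\Sigma$-bound \eqref{estim_a_Sigma}. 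Grönwall's lemma with $w(0)=0$ gives $\|w(1/n)\|_{L^2}\le C n^{-2}\|\psi\|_{\Sigma}$, so that
\[
\|\frak{B}_n\psi-G_n\psi\|_{L^2}\le C n^{-2}\bigl(\|\psi\|_{\Sigma}+\tau^{-1}\|\psi\|_{L^2}\bigr).
\]

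Finally I would telescope. The phase multiplications are $L^2$-isometries and the free log-NLS flow is Lipschitz with constant $e^{2|\lambda|\tau/n}$ by \eqref{Lipschitz}, so each $\frak{B}_n$ is Lipschitz on $L^2$ with constant $e^{2|\lambda|\tau/n}$, and every iterate $(\frak{B}_n)^{j}$ has Lipschitz constant $\le e^{2|\lambda|\tau}$ uniformly in $n$. Arranging the standard telescoping sum for nonlinear maps with $\frak{B}_n$ on the left and $G_n$ on the right evaluates each one-step error at a point $G_n^k\psi_0=\mathcal{R}(k/n;\tau,\varphi)\psi_0$ of the \emph{target} trajectory, which stays bounded in $\Sigma$ by $C\|\psi_0\|_{\Sigma}$ on $[0,1]$ thanks to \eqref{estim_a_Sigma}; summing $n$ contributions of size $Cn^{-2}(\|\psi_0\|_{\Sigma}+\tau^{-1}\|\psi_0\|_{L^2})$ gives a bound $Cn^{-1}\to0$ for $\psi_0\in\Sigma$, and a density argument using the uniform Lipschitz bound extends it to all $\psi_0\in L^2$. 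I expect the main obstacle to be the energy estimate of the third paragraph: it is the only point where the non-Lipschitz nonlinearity $z\mapsto z\log|z|^2$ and the derivative loss in the transport source meet, forcing the comparison to be carried out in $\Sigma$ while the conclusion is in $L^2$. The telescoping arrangement—reading the per-step errors along the regular target trajectory rather than along the $\frak{B}_n$-iterates—is the device that keeps the needed $\Sigma$-norms bounded uniformly in $n$ and prevents a factor $O(n)$ from spoiling the limit.
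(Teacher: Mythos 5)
Your proposal is correct and follows essentially the same route as the paper's proof: the WKB identity $\frak{B}_n\psi=e^{ir_n/\tau}\mathcal{R}(1/n;\tau,\phi)\psi$ via Proposition~\ref{Prop:a}, the one-step $O(n^{-2})$ bound in $\Sigma$ combining \eqref{estim_phi}, \eqref{estim_a_Sigma} and the Cazenave--Haraux estimate, the Lipschitz telescoping with errors read along the target trajectory $(\mathcal{R}(k/n;\tau,\varphi)\psi_0)_k$, and the final density argument are all exactly the paper's Steps 1--5 (the only cosmetic difference being which of the two trajectories carries the transport source term in the energy estimate).
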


\begin{proof}
In the linear case ($\lambda=0$), this result is the celebrated
Trotter-Kato formula (see e.g. \cite{kato}). In the present nonlinear
case, this result is also reminiscent of error estimates for splitting
methods in numerical analysis (see e.g. \cite{HNW93}), from which we
borrow the scheme of the proof.   
  
Let $\varphi \in C^{\infty}(M,\R)$ at most linear, and $\tau>0$.

\noindent \emph{Step 1: We prove that, for every $n \in \N^*$, the map $\frak{B}_n$ is $e^{\frac{2\tau|\lambda|}{n}}$-Lipschitz on $L^2(M,\C)$.} 
Using \eqref{Lipschitz}, we obtain,
for every $\psi_0, \widetilde{\psi}_0 \in L^2(M,\C)$,
\begin{equation*}
 \left\| \frak{B}_n \psi_0 - \frak{B}_n \widetilde{\psi}_0 \right\|_{L^2}  =
\left\| \mathcal{R}\left(\frac{\tau}{n};1,0\right)
e^{-i \frac{\varphi}{\tau} } \psi_0 -
\mathcal{R}\left(\frac{\tau}{n};1,0\right)
e^{-i \frac{\varphi}{\tau} } \widetilde{\psi}_0
\right\|_{L^2}
 \leq 
e^{\frac{2 \tau |\lambda|}{n}} \| \psi_0 - \widetilde{\psi}_0\|_{L^2}. 
\end{equation*}

\medskip

\noindent \emph{Step 2: Reformulation of $\frak{B}_n \psi_0$ thanks to the eikonal equation.} Let $n^*$ be such that $\frac{1}{n^*} \| D^2\varphi \|_{L^{\infty}} < 1$. Then the solution $\phi$ of the eikonal equation \eqref{eq:phi} is well defined on $[0,\frac{1}{n^*}] \times M$. Let $\tau>0$. By Proposition \ref{Prop:a}, for every $n \geq n^*$ and $\psi_0 \in \Sph $,
\begin{equation*}
\frak{B}_n
\psi_0
=  e^{\frac{i}{\tau}\left(\phi(\frac{1}{n})-\phi(0)-\frac{1}{n} \partial_s \phi(0)\right)} a\left(\frac 1n \right)
= e^{\frac{i}{\tau}\left(\phi(\frac{1}{n})-\phi(0)-\frac{1}{n} \partial_s \phi(0)\right)} \mathcal{R}\left(\frac{1}{n} ; \tau , \phi\right) \psi_0,
\end{equation*}
where $a$ solves \eqref{eq:a_epsilon}.

\medskip

\noindent \emph{Step 3: We prove there exists $C>0$ such that, for every $n \geq n^*$ and $\psi_0 \in \Sigma$,
\begin{equation} \label{step1}
\left\| \frak{B}_n \psi_0
-\mathcal{R}\left(\frac{1}{n} ; \tau , \varphi \right) \psi_0
\right\|_{L^2} \leq  \frac{C}{n^{2}} \|\psi_0\|_{\Sigma}.
\end{equation}} 
Let $\widetilde{a}(s):=\mathcal{R}(s;\tau,\varphi) \psi_0$:
it solves the autonomous system
\begin{equation} \label{eq:a_tilde}
\begin{cases}
\left( i\partial_s+ \frac{\tau}{2} \Delta - \tau V(x) + i \mathcal{T}_{\nabla \varphi} \right)\widetilde{a}(s,x)
=\tau \lambda \widetilde{a} \log|\widetilde{a}|^2(t,x),& (t,x) \in (0,T) \times \R^d, \\
\psi(0,\cdot)=\psi_0. &  
\end{cases}
\end{equation}
Using \eqref{estim_phi}, we get
\begin{equation*}
  \left\| e^{\frac{i}{\tau}\left(\phi(\frac{1}{n})-\phi(0)-\frac{1}{n} \partial_s \phi(0)\right)} - 1 \right\|_{L^{\infty}}
\leq 
\frac{1}{\tau} \left\|
  \phi\left(\frac{1}{n}\right)-\phi(0)-\frac{1}{n} \partial_s \phi(0)
\right\|_{L^{\infty}} 
\leq \frac{C}{n^2}, 
\end{equation*}
where $C=C(\tau,\varphi)>0$.
By Step 2 and the previous estimate,
\begin{equation} \label{interm1}
\begin{aligned}
\left\| \frak{B}_n \psi_0
-\mathcal{R}\left(\frac{1}{n} ; \tau , \varphi \right) \psi_0
\right\|_{L^2}
& =
\left\| e^{\frac{i}{\tau}\left(\phi(\frac{1}{n})-\phi(0)-\frac{1}{n} \partial_s \phi(0)\right)} a\left(\frac{1}{n}\right) - \widetilde{a}\left(\frac{1}{n}\right) \right\|_{L^2} 
\\ & 
\leq
\frac{C\|\psi_0\|_{L^2}}{n^2}  + 
\left\| (a - \widetilde{a})\left(\frac{1}{n}\right) \right\|_{L^2}.
\end{aligned}
\end{equation}
For every $s \in (0,s^*)$, using \eqref{estim_phi} and
\eqref{estim_a_Sigma}, we obtain
\begin{equation*}
 \| \mathcal{T}_{\nabla\phi(s)-\nabla\varphi} a(s) \|_{L^2}
\leq C \|\nabla \phi(s)-\nabla \varphi\|_{W^{1,\infty}} \|a(s)\|_{\Sigma}
\leq C s \|\psi_0\|_{\Sigma}. 
\end{equation*}
The  standard energy estimate yields, using \eqref{Cazenave},
\begin{equation*}
  \frac{d}{ds}\|a-\widetilde a\|_{L^2}^2\le 2\left\|
    \mathcal{T}_{\nabla\phi(s)}a
    -\mathcal{T}_{\nabla\varphi}a\right\|_{L^2}\|a-\widetilde a\|_{L^2} +
  4|\lambda| \tau\|a-\widetilde a\|_{L^2}^2.
\end{equation*}
Gr\"onwall lemma implies
\begin{align*} 
  \left\| (a- \widetilde{a})\left(\frac 1n \right) \right\|_{L^2}
  &\le
C \int_0^{1/n} \left\| \mathcal{T}_{\nabla \phi(s)} a(s)-
  \mathcal{T}_{\nabla\varphi}a(s)\right\|_{L^2}ds=
C \int_0^{1/n} \left\| \mathcal{T}_{\nabla\phi(s)-\nabla\phi(0)} a(s)
    \right\|_{L^2} ds \\
  &\leq \frac{C}{n^2} \|\psi_0\|_{\Sigma}. 
\end{align*}
Together with \eqref{interm1}, this inequality proves the claim.

\medskip

\noindent \emph{Step 4: We prove the convergence for $\psi_0 \in \Sigma$, thanks to a telescopic argument.}
To simplify notations, we write
$\frak{A}_n:=\mathcal{R}\left(\frac{1}{n};\tau,\varphi \right)$. Then,
using Step 1, Step 3 and \eqref{estim_a_Sigma}, we obtain, by the
triangle inequality called Lady Windermere's fan in \cite{HNW93},
\begin{equation*}
\begin{aligned}
 \left\|
\left(  \frak{B}_n  \right)^{n} \psi_0
- 
\mathcal{R}\left(1;\tau,\varphi \right) \psi_0
\right\|_{L^2}
 & =  
\left\| 
(\frak{B}_n)^{n}  \psi_0 - 
(\frak{A}_n)^n \psi_0  \right\|_{L^2}
\\  & \le   \sum_{k=0}^{n-1} \left\|  
(\frak{B}_n)^k 
\frak{B}_n
(\frak{A}_n)^{n-1-k} \psi_0 
-
(\frak{B}_n)^k 
\frak{A}_n
(\frak{A}_n)^{n-1-k}
\psi_0      \right\|_{L^2}\\
&   \le 
\sum_{k=0}^{n-1} e^{2\tau |\lambda| \frac{k}{n}} 
\left\|  \frak{B}_n
(\frak{A}_n)^{n-1-k} \psi_0 
-\frak{A}_n
(\frak{A}_n)^{n-1-k}
  \psi_0      \right\|_{L^2}\\
 &   \le 
\sum_{k=0}^{n-1} e^{2\tau |\lambda| \frac{k}{n}} 
\frac{C}{n^{2}}\left\|(\frak{A}_n)^{n-1-k}
  \psi_0      \right\|_{\Sigma} 
\\  & \le 
\sum_{k=0}^{n-1} e^{2\tau |\lambda| \frac{k}{n}} 
\frac{C}{n^{2}} \|\psi_0\|_{\Sigma}
 \leq  \frac{C}{ n^{2} } \|\psi_0\|_{\Sigma} .
\end{aligned}
\end{equation*}

\noindent \emph{Step 5: We prove the convergence for every $\psi_0 \in
  L^2(M,\C)$.} By Step 1 and \eqref{Lipschitz}, for every $n \in
\N^*$, the map 
$\psi_0 \mapsto \left(  \frak{B}_n  \right)^{n} \psi_0
- 
\mathcal{R}\left(1;\tau,\varphi \right) \psi_0$
is $2 e^{2\tau |\lambda]}$-Lipschitz on $L^2(M,\C)$.
We conclude thanks to the density of $\Sigma$ in $L^2(M,\C)$.
\end{proof}

\begin{proposition} \label{Prop4}
For $\varphi \in C^{\infty} (M,\R)$ at most linear
and 
$\psi_0 \in L^2(M,\C)$ then
\begin{equation*}
  \left\|
\mathcal{R}\left( 1 ; \tau , \varphi \right) \psi_0
- e^{-\mathcal{T}_{\nabla \varphi}} \psi_0
\right\|_{L^2} \underset{\tau \to 0}{\longrightarrow} 0.
\end{equation*}
\end{proposition}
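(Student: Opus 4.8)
The plan is to mimic the energy-estimate strategy already used for Propositions~\ref{Prop2} and \ref{Prop3}: exhibit the target $e^{-\mathcal{T}_{\nabla\varphi}}\psi_0$ as the value at $s=1$ of an explicit approximate solution of \eqref{logNLS+transp}, and control the difference by a Grönwall argument in which every dangerous term carries a factor $\tau$.

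First I would reduce to smooth, compactly supported data. The map $e^{-\mathcal{T}_{\nabla\varphi}}=\mathcal{L}_{\Phi_{\nabla\varphi}^{-1}}$ is an $L^2$-isometry, while the difference-of-solutions computation for \eqref{logNLS+transp} (identical to the one giving \eqref{Lipschitz}, since the added term $i\mathcal{T}_{\nabla\varphi}$ is skew-adjoint) shows that $\psi_0\mapsto\mathcal{R}(1;\tau,\varphi)\psi_0$ is $e^{2\tau|\lambda|}$-Lipschitz on $L^2(M)$, with a constant bounded as $\tau\to0$. Hence it suffices to prove the convergence for $\psi_0\in C_0^\infty(M,\C)$, the general case following by density (at the cost of losing the explicit convergence rate, exactly as at the end of the proof of Proposition~\ref{Prop2}).

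Then I set $a(s):=\mathcal{R}(s;\tau,\varphi)\psi_0$ and $b(s):=e^{-s\mathcal{T}_{\nabla\varphi}}\psi_0$. Since $e^{s\mathcal{T}_f}=\mathcal{L}_{\Phi_f^s}$, the function $b$ is the transport solution of $i\partial_s b+i\mathcal{T}_{\nabla\varphi}b=0$, so plugging $b$ into \eqref{logNLS+transp} produces the residual
\begin{equation*}
i\partial_s b+\frac{\tau}{2}\Delta b-\tau V b+i\mathcal{T}_{\nabla\varphi}b=\tau\lambda b\log|b|^2+\tau R,\qquad R:=\frac12\Delta b-V b-\lambda b\log|b|^2,
\end{equation*}
which is crucially free of any negative power of $\tau$. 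Writing the equation for $a-b$ and running the standard $L^2$-estimate, the terms $\frac{\tau}{2}\Delta(a-b)$, $-\tau V(a-b)$ and $i\mathcal{T}_{\nabla\varphi}(a-b)$ all drop upon taking the imaginary part (self-adjointness of $\Delta$ and of multiplication by the real $V$, skew-adjointness of $\mathcal{T}_{\nabla\varphi}$), Lemma~\ref{lem:CH} bounds the nonlinear difference by $4\tau|\lambda|\,\|a-b\|_{L^2}^2$, and Cauchy--Schwarz handles the source $\tau R$. One is left with $\frac{d}{ds}\|a-b\|_{L^2}^2\le\tau(4|\lambda|+1)\|a-b\|_{L^2}^2+\tau\|R\|_{L^2}^2$, and Grönwall with $a(0)=b(0)$ yields $\|a(1)-b(1)\|_{L^2}^2\le\tau\,e^{\tau(4|\lambda|+1)}\int_0^1\|R(\sigma)\|_{L^2}^2\,d\sigma$.

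The remaining and only delicate point is the uniform boundedness of $\int_0^1\|R(\sigma)\|_{L^2}^2\,d\sigma$, which is exactly where the reduction to $C_0^\infty$ data pays off. Because $\nabla\varphi$ is globally Lipschitz ($\varphi$ being at most linear, $D^2\varphi\in L^\infty$), its flow maps $[0,1]\times M$, restricted to the support of $\psi_0$, into a fixed compact set, so each $b(s)=\mathcal{L}_{\Phi^{-s}_{\nabla\varphi}}\psi_0$ is smooth with support in a common compact set; then $\Delta b$ and $Vb$ lie in $L^2$ uniformly on $[0,1]$ (the possible growth of $V$ on $\R^d$ is killed by the compact support), and since $z\mapsto z\log|z|^2$ is continuous on $\C$, so is $b\log|b|^2$, compactly supported and bounded. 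This gives $\sup_{s\in[0,1]}\|R(s)\|_{L^2}\le C(\psi_0,\varphi,V,\lambda)$, whence $\|a(1)-b(1)\|_{L^2}^2\le C\tau\,e^{\tau(4|\lambda|+1)}\to0$, concluding the smooth case and, by the density step above, the general one.
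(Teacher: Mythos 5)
Your proposal is correct and follows essentially the same route as the paper: you exhibit $b(s)=e^{-s\mathcal{T}_{\nabla\varphi}}\psi_0$ as an approximate solution of \eqref{logNLS+transp} with residual $\tau R$, run the $L^2$ energy estimate with Lemma~\ref{lem:CH} to get an $\mathcal{O}(\tau)$ bound for $\psi_0\in C_0^\infty$, and conclude by density and Lipschitz continuity. The only differences are that you spell out the uniform bound on $\|R(s)\|_{L^2}$ and the Lipschitz property of $\mathcal{R}(1;\tau,\varphi)$, which the paper leaves implicit (and your normalization $R=\frac12\Delta b-Vb-\lambda b\log|b|^2$ is the consistent one).
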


\begin{proof}
By definition, for every $\tau>0$ and $s \in [0,1]$,
$\mathcal{R}\left( s ; \tau , \varphi \right) \psi_0 =
\widetilde{a}(s)$ where $\widetilde{a}$ solves \eqref{eq:a_tilde}. Let
$b(s):=e^{-s \mathcal{T}_{\nabla \varphi}} \psi_0$: it solves
\begin{equation*}
  i\d_s b +i\mathcal{T}_{\nabla \varphi}b=0,\quad b_{\mid s=0}=\psi_0,
\end{equation*}
which can be rewritten as
\begin{equation*}
  \begin{cases}
    i\d_s b +\frac{\tau}{2}\Delta b -\tau V b+i\mathcal{T}_{\nabla
      \varphi}b=\tau \lambda  b\log|b|^2 +\tau R,\\
    b_{\mid s=0}=\psi_0, 
  \end{cases}
\end{equation*}
where
\begin{equation*}
  R =  \frac{\tau}{2}\Delta b -\tau V b -\lambda  b\log|b|^2.
\end{equation*}
For $\psi_0\in
C_0^\infty(M)$, the $L^2$ estimate with \eqref{Cazenave} yields
\begin{equation*}
   \| \widetilde{a}(s) - b(s) \|_{L^2} \leq
  C \tau \|R\|_{L^2}\le C\tau, \quad \forall s \in [0,1].
\end{equation*}
For $\psi_0 \in L^2(M,\C)$ we conclude using density and Lipschitz continuity.
\end{proof}

\subsection{On $\T^d$}

\begin{proposition} \label{Prop:STCflows_tore}
System \eqref{logNLS_tore} satisfies the following property:
for every $\varphi \in C^{\infty}(\T^d,\R)$, the operator $e^{-\mathcal{T}_{\nabla \varphi}}$ is $L^2$-STAR.
\end{proposition}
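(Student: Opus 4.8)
Since $f\mapsto\mathcal{T}_f$ is linear and $e^{\mathcal{T}_f}=\mathcal{L}_{\Phi_f^1}$, the operator at stake is $e^{-\mathcal{T}_{\nabla\varphi}}=\mathcal{L}_{\Phi_{-\nabla\varphi}^1}$, i.e.\ the time-one flow of the gradient field $-\nabla\varphi$; since applying the statement to $-t\varphi$ in place of $\varphi$ produces $\mathcal{L}_{\Phi_{\nabla\varphi}^t}$ for every $t\in\R$, establishing it for all $\varphi\in C^\infty(\T^d,\R)$ yields the full STC of flows of gradient vector fields on $\T^d$. The plan is to read Propositions~\ref{Prop:Trotter-Kato} and \ref{Prop4} backwards. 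By Proposition~\ref{Prop4}, $\mathcal{R}(1;\tau,\varphi)\to e^{-\mathcal{T}_{\nabla\varphi}}$ strongly as $\tau\to0$, and by Proposition~\ref{Prop:Trotter-Kato}, $(\frak{B}_n)^n\to\mathcal{R}(1;\tau,\varphi)$ strongly as $n\to\infty$; so it is enough to realize $(\frak{B}_n)^n\psi_0$ approximately, for suitably small $\tau$ and large $n$, by a trajectory of \eqref{logNLS_tore} whose duration is at most $\varepsilon$.

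Each factor $\frak{B}_n=e^{i|\nabla\varphi|^2/(2n\tau)}\,e^{-i\varphi/\tau}\,\mathcal{R}(\tau/n;1,0)\,e^{-i\varphi/\tau}$ is the composition of three multiplications by phases $e^{i\chi}$ with $\chi\in L^2(\T^d,\R)$ and of one free evolution $\mathcal{R}(\tau/n;1,0)=\psi(\tau/n;0,\cdot)$. By the STC of phases (Proposition~\ref{Prop:DN}) each phase multiplication is $L^2$-STAR, hence realizable in arbitrarily small physical time, whereas the free evolution is produced exactly by the null control $u=0$ run during the physical time $\tau/n$. Concatenating the $n$ factors, the whole product $(\frak{B}_n)^n$ is realized by alternating $n$ null-control evolutions with phase blocks, for a total physical time $n\cdot(\tau/n)+\delta=\tau+\delta$, where the time $\delta>0$ spent on all the phases can be made as small as we wish.

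The crux is the time budget. Because the free evolution $\mathcal{R}(\tau/n;1,0)$ can only be reached in the fixed positive time $\tau/n$, the intermediate map $\mathcal{R}(1;\tau,\varphi)$ is \emph{not} itself $L^2$-STAR (it requires physical time $\approx\tau$), so one cannot simply iterate Lemma~\ref{lem:reachable-operators} through the tower above; instead $\tau$ must play the double role of approximation parameter and of time budget. Concretely, given $\psi_0\in\Sph$ and $\varepsilon>0$, I would first fix $\tau<\varepsilon$ small enough (Proposition~\ref{Prop4}) that $\|\mathcal{R}(1;\tau,\varphi)\psi_0-e^{-\mathcal{T}_{\nabla\varphi}}\psi_0\|_{L^2}<\varepsilon/3$, then $n$ large enough (Proposition~\ref{Prop:Trotter-Kato}) that $\|(\frak{B}_n)^n\psi_0-\mathcal{R}(1;\tau,\varphi)\psi_0\|_{L^2}<\varepsilon/3$, so that the realization time $\tau+\delta$ stays below $\varepsilon$. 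The last point is to control the accumulation, over the $n$ steps, of the errors made in replacing the exact phases by their small-time approximations (and of the global phases they generate): using \eqref{Lipschitz}, each $\frak{B}_n$ is $e^{2|\lambda|\tau/n}$-Lipschitz (Step~1 of Proposition~\ref{Prop:Trotter-Kato}) and its physical realization is Lipschitz with a comparable constant, so the cumulative amplification over the $n$ steps remains bounded by $e^{2|\lambda|(\tau+\delta)}$ and the total error is at most this factor times the sum of the per-step errors, which is forced below $\varepsilon/3$ by approximating each phase accurately enough. Collecting the three contributions yields $T\le\varepsilon$, a phase $\theta$ and a control $u\in PWC([0,T],\R^m)$ with $\|\psi(T;u,\psi_0)-e^{i\theta}e^{-\mathcal{T}_{\nabla\varphi}}\psi_0\|_{L^2}<\varepsilon$, i.e.\ the asserted $L^2$-STAR property.
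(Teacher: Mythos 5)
Your proof is correct and follows essentially the same route as the paper: realize each factor $\frak{B}_n$ by combining the STC of phases (Proposition~\ref{Prop:DN}) with the exact reachability of $\mathcal{R}(\tau/n;1,0)$ via the null control in time $\tau/n$, concatenate to obtain $(\frak{B}_n)^n$ in total time $\tau^+$, then pass to the limits $n\to\infty$ (Proposition~\ref{Prop:Trotter-Kato}) and $\tau\to 0$ (Proposition~\ref{Prop4}). You also correctly identify the one subtlety — that $\mathcal{R}(1;\tau,\varphi)$ is only approximately reachable in time $\tau^+$ rather than $L^2$-STAR, so $\tau$ must double as the time budget — which is exactly how the paper's proof is organized.
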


\begin{proof}
By Lemma \ref{lem:reachable-operators}, one may assume 
$\| D^2 \varphi \|_{L^2}<1$ because for every $n \in \N$,
$e^{n\mathcal{T}_{\nabla \varphi}} = \left( e^{\mathcal{T}_{\nabla \varphi}} \right)^n$. Then one may use $s^*=1$ in Propositions \ref{Prop:eikonal} and \ref{Prop:a}. By Proposition \ref{Prop:DN} and Lemma \ref{lem:reachable-operators}, 
for every $\tau>0$, the map $(\frak{B}_n)^n$ is $L^2$-approximately reachable in time $\tau^+$. By Proposition \ref{Prop:Trotter-Kato} and Lemma \ref{lem:reachable-operators}, for every $\tau$, the map $\mathcal{R}(1;\tau,\varphi)$ is $L^2$-approximately reachable in time $\tau^+$. By Proposition \ref{Prop4} and Lemma \ref{lem:reachable-operators}, $e^{-\mathcal{T}_{\nabla \varphi}}$ is $L^2$-STAR.
\end{proof}

\subsection{On $\R^d$}

\begin{proposition}\label{Prop:STCflows_Rd}
System \eqref{logNLS+} satisfies the following property:
for every $f \in \{ \alpha  e_j, \alpha \nabla( x_j^{a} e^{-|x|^2/4} ) ; \alpha \in\R, j \in \{1,\dots,d\}, a \in \{0,1,2\} \}$,
the operator $e^{\mathcal{T}_{f}}$ is $L^2$-STAR.
\end{proposition}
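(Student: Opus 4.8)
The plan is to transpose to $\R^d$, almost verbatim, the argument used on $\T^d$ in Proposition~\ref{Prop:STCflows_tore}, simply replacing the torus density input (Proposition~\ref{Prop:DN}) by its $\R^d$ counterpart (Proposition~\ref{Prop:DP}) and treating the two families of vector fields separately. The first observation is that every admissible $f$ is a gradient: $\alpha e_j=\nabla(\alpha x_j)$ and $\alpha\nabla(x_j^a e^{-|x|^2/4})=\nabla(\alpha x_j^a e^{-|x|^2/4})$. Since $\mathcal{T}$ is linear in its vector-field argument, writing $f=\nabla g$ and setting $\varphi:=-g$ gives $\mathcal{T}_{\nabla\varphi}=-\mathcal{T}_f$, hence $e^{-\mathcal{T}_{\nabla\varphi}}=e^{\mathcal{T}_f}$. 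So it suffices to show that $e^{-\mathcal{T}_{\nabla\varphi}}$ is $L^2$-STAR for the relevant $\varphi$, which is exactly the object delivered by Proposition~\ref{Prop4}.

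For the constant fields $f=\alpha e_j$ I would note that $\mathrm{div}(f)=0$, so $\mathcal{T}_f=\alpha\partial_{x_j}$ and $e^{\mathcal{T}_f}=e^{\alpha\partial_{x_j}}$, which is already proved to be $L^2$-STAR in Proposition~\ref{Prop:DP}; nothing further is needed. The substance is the Gaussian family $f=\alpha\nabla(x_j^a e^{-|x|^2/4})$. Here I take $\varphi:=-\alpha x_j^a e^{-|x|^2/4}$, a Schwartz function: in particular it is at most linear (so Propositions~\ref{Prop:eikonal},~\ref{Prop:a},~\ref{Prop:Trotter-Kato} and~\ref{Prop4} all apply), and crucially both $\varphi$ and $|\nabla\varphi|^2$ lie in $L^2(\R^d,\R)$. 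Using the composition rule of Lemma~\ref{lem:reachable-operators} together with the scale-invariance of the family under $\alpha\mapsto\alpha/N$ and the identity $e^{-\mathcal{T}_{\nabla\varphi}}=(e^{-\mathcal{T}_{\nabla(\varphi/N)}})^N$, I may first reduce to $\|D^2\varphi\|_{L^\infty}<1$, so that $s^*=1$ is admissible in Propositions~\ref{Prop:eikonal} and~\ref{Prop:a}.

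The remaining step is the transfer chain. Fix $\tau>0$. Because $\varphi/\tau$ and $|\nabla\varphi|^2/(2n\tau)$ belong to $L^2(\R^d,\R)$, Proposition~\ref{Prop:DP} makes the three phase-multiplications in $\frak{B}_n$ individually $L^2$-STAR, while $\mathcal{R}(\tau/n;1,0)$ is reached in time $\tau/n$ with $u=0$; by Lemma~\ref{lem:reachable-operators}, $\frak{B}_n$ is $L^2$-approximately reachable in time $(\tau/n)^+$, hence $(\frak{B}_n)^n$ in time $\tau^+$. Proposition~\ref{Prop:Trotter-Kato} identifies $\mathcal{R}(1;\tau,\varphi)$ as the strong limit of $(\frak{B}_n)^n$, so it too is reachable in time $\tau^+$; and Proposition~\ref{Prop4} identifies $e^{-\mathcal{T}_{\nabla\varphi}}$ as the strong limit of $\mathcal{R}(1;\tau,\varphi)$ as $\tau\to0$. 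Since the approximating maps are reachable in times $\tau^+\to0$, Lemma~\ref{lem:reachable-operators} yields that $e^{-\mathcal{T}_{\nabla\varphi}}=e^{\mathcal{T}_f}$ is $L^2$-STAR.

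I expect the main obstacle to be conceptual bookkeeping rather than hard analysis, since Propositions~\ref{Prop:Trotter-Kato} and~\ref{Prop4} already carry the WKB estimates for general $M$ and at most linear $\varphi$. The genuinely $\R^d$-specific point is that the phases appearing in $\frak{B}_n$ must be \emph{$L^2$} functions for Proposition~\ref{Prop:DP} to apply; this is precisely why the Gaussian profile is chosen (so that $\varphi$ and $|\nabla\varphi|^2$ are Schwartz), and why the purely linear fields $\alpha e_j$, whose phase $\alpha x_j/\tau\notin L^2$, must instead be routed through the separate translation statement of Proposition~\ref{Prop:DP}. The other delicate item is the reachability-time accounting — that a composition of $n$ maps each reachable in time $(\tau/n)^+$ is reachable in time $\tau^+$, and that strong limits preserve the reachability time — which is exactly what Lemma~\ref{lem:reachable-operators} is designed to supply.
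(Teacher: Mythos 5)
Your proof is correct and takes essentially the same route as the paper, which handles $f=\alpha e_j$ by invoking Proposition~\ref{Prop:DP} directly and, for the Gaussian fields, simply refers back to the chain Proposition~\ref{Prop:DP} $\to$ Proposition~\ref{Prop:Trotter-Kato} $\to$ Proposition~\ref{Prop4} $\to$ Lemma~\ref{lem:reachable-operators} already run on the torus in Proposition~\ref{Prop:STCflows_tore}. The details you spell out (the $L^2$-integrability of the Schwartz phases in $\frak{B}_n$, the reduction to $\|D^2\varphi\|_{L^\infty}<1$ by composition, and the reachability-time accounting) are exactly what the paper leaves implicit.
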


\begin{proof}
If $f=\alpha e_j$ then Proposition \ref{Prop:DP} gives the conclusion.
If $f=\alpha \nabla( x_j^{a} e^{-|x|^2/4} )$, the proof is the same as in the previous section.
\end{proof}

\section{A negative result}\label{sec:non-controllability}

To conclude, we prove Theorem~\ref{Thm:negatif}: we treat the first
statement in Section~\ref{subsec:V=0} and the second one in
Section~\ref{subsec:V_non_nul}. 

\subsection{Quadratic potential: invariance of Gaussian states}
\label{subsec:V=0}

In this section, we consider the system \eqref{logNLS_x2} with a
potential of the form $V(x)=\alpha |x|^2+\beta \cdot x + \gamma$ with
$\alpha,\gamma \in \R, \beta \in \R^d$. Up to a change of controls
$u_0 \leftarrow u_0+2\alpha$, $u_j \leftarrow u_j+\beta_j$, and global
phase $\psi \leftarrow \psi e^{-i\gamma t}$, one may assume that
$V=0$. The proof of the corresponding statement in
Theorem~\ref{Thm:negatif} relies on the invariance, by the dynamics,
of the set of normalized Gaussian states 
\begin{equation} \label{Gauss}
\mathcal{G}:=\{ x \in \R^d \mapsto  e^{-z_2|x|^2+z_1\cdot x+z_0} ;
z_2, z_0 \in \C, z_1 \in \C^d   \}\cap\Sph .
\end{equation}
(The intersection with the $L^2$-sphere $\Sph$ implicitly gives
$\RE(z_2)>0$ and a relation between $z_0, z_1, z_2$.)

\begin{lemma} \label{Lem:explicit}
We assume $V=0$.
Let $(a_0, b_0, c_0) \in \C \times \mathbb{C}^d \times \mathbb{C}$ be
such that $\RE(a_0)>0$ and $u=(u_0,\dots,u_{d}) \in
L^{\infty}(\R,\R^{d+1})$. The solution of \eqref{logNLS_x2} associated with the initial condition $\psi_0(x):=e^{-a_0|x|^2/2+b_0 \cdot x+c_0}$ satisfies, for every $(t,x) \in \R \times \R^d$,
$\psi(t,x)=e^{-a(t)|x|^2/2+b(t) \cdot x +c(t)}$ where
\begin{equation} \label{eq:a}
   i \dot{a}(t)=a(t)^2+2\lambda \RE(a(t))-u_0(t),   \quad
     a(0)=a_0,
\end{equation}
\begin{equation} \label{eq:b}
    i\dot{b}(t)=a(t)b(t)+2\lambda \RE(b(t))+u(t),    \quad
     b(0)=b_0,
\end{equation}
\begin{equation} \label{eq:c}
i\dot c(t) =\frac{d}{2}a(t)-\frac{1}{2}\sum_{j=1}^d b_j^2+2\lambda\RE
(c(t)) ,\quad c(0)=c_0.
\end{equation}
\end{lemma}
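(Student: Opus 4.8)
The plan is to verify directly that the Gaussian ansatz $\psi(t,x)=e^{\phi(t,x)}$ with $\phi(t,x)=-a(t)|x|^2/2+b(t)\cdot x+c(t)$ solves \eqref{logNLS_x2} (with $V=0$) precisely when $(a,b,c)$ satisfy \eqref{eq:a}--\eqref{eq:c}, and then to identify this Gaussian with the genuine solution through the uniqueness statement of Proposition~\ref{Prop:WP}. First I would compute the three ingredients entering the equation: $\partial_t\psi=\psi\,\partial_t\phi$; the Laplacian $\Delta\psi=\psi(\Delta\phi+|\nabla\phi|^2)$ with $\nabla\phi=-ax+b$, so that $|\nabla\phi|^2=a^2|x|^2-2a\,(b\cdot x)+\sum_j b_j^2$ and $\Delta\phi=-da$, reading $|\nabla\phi|^2$ as the sum of squares of the complex entries of $\nabla\phi$; and the nonlinear term through $\log|\psi|^2=2\RE\phi=-\RE(a)|x|^2+2\RE(b)\cdot x+2\RE(c)$. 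Dividing the equation by $\psi\neq 0$ and matching the coefficients of $|x|^2$, of each $x_j$, and of the constant term yields exactly \eqref{eq:a}, \eqref{eq:b} and \eqref{eq:c}; the terms $2\lambda\RE(a)$, $2\lambda\RE(b)$, $2\lambda\RE(c)$ are produced precisely by the logarithmic nonlinearity.

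Next I would address the solvability of the ODE system, which has a convenient cascade structure: \eqref{eq:a} involves only $a$ and $u_0$, equation \eqref{eq:b} is real-linear in $b$ once $a$ is known, and \eqref{eq:c} is then real-linear in $c$ once $a,b$ are known. Since each right-hand side is locally Lipschitz in the unknown and $L^\infty$ in time, Cauchy--Lipschitz provides a unique maximal solution. To guarantee that the Gaussian remains a genuine normalizable, nondegenerate state, I would split $a=p+iq$ into real and imaginary parts and read off from \eqref{eq:a} that $\dot p=2pq$, whence $p(t)=\RE(a_0)\exp\!\big(2\int_0^t q\big)>0$ throughout the interval of existence; this keeps $\RE(a)>0$, so $\psi(t,\cdot)$ stays a bona fide Gaussian in $\Sigma$ (and the Gaussian initial datum already lies in $\Sigma$, so Proposition~\ref{Prop:WP} applies).

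On any interval where $(a,b,c)$ exist, the Gaussian lies in $L^\infty(\Sigma)\cap C^0(L^2)$ and solves \eqref{logNLS_x2}, so by the uniqueness in Proposition~\ref{Prop:WP} it coincides with $\psi(t;u,\psi_0)$ there. The main obstacle is then to upgrade this to all $t\in\R$, i.e. to rule out a finite-time degeneration of the Gaussian (a blow-up of the ODE). I expect to handle this by reducing \eqref{eq:a} to the scalar nonlinear oscillator satisfied by the width $r:=(\RE a)^{-1/2}=p^{-1/2}$, namely $\ddot r=r^{-3}+2\lambda\,r^{-1}-u_0(t)\,r$: the strongly repulsive barrier $r^{-3}$ prevents $r\to 0$ (hence $p$ from blowing up), while the bound $|\ddot r|\le \|u_0\|_{L^\infty}\,r+O(r^{-1})$ for large $r$ forbids $r\to+\infty$ in finite time, so that $a$, and then $b$ and $c$, are global. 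Alternatively, one may invoke directly the global well-posedness of \eqref{logNLS_x2} together with the conservation of the $L^2$-norm to preclude degeneration, in accordance with the absence of finite-time blow-up for log-NLS.
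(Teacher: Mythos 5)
Your proposal is correct and follows essentially the same route as the paper: substitute $\psi=e^{\phi}$ with $\phi(t,x)=-a(t)|x|^2/2+b(t)\cdot x+c(t)$, use $\log|\psi|^2=2\RE\phi$ together with $\Delta\psi=\psi(\Delta\phi+\sum_j(\partial_{x_j}\phi)^2)$, and match the coefficients of $|x|^2$, $x_j$ and $1$ to obtain \eqref{eq:a}--\eqref{eq:c}. The only difference is one of bookkeeping: the paper delegates the global-in-time solvability of the ODE system and the persistence of $\RE a>0$ (which you correctly derive from $\frac{d}{dt}\RE a=2\,\RE a\,\IM a$) to the Remark following the Lemma, where \eqref{eq:a} is handled by citing \cite[Lemma 4.1]{carles-dong} and \eqref{eq:b}--\eqref{eq:c} by linearity, rather than by your (correct) reduction to the oscillator $\ddot r=r^{-3}+2\lambda r^{-1}-u_0(t) r$.
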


\begin{remark}
The maximal solution of the nonlinear system \eqref{eq:a} is defined on $\R$ (see \cite[Lemma 4.1]{carles-dong}).
Thus the maximal solution of the non-homogeneous linear system \eqref{eq:b} is defined on $\R$ too. The same property holds for \eqref{eq:c}. Moreover, we deduce from \eqref{eq:a} that, for every $t \in \R$, $\RE(a(t))=\RE(a_0) e^{-4 \int_0^t \IM(a(s)) ds} >0$, thus
$e^{a(t)|x|^2/2+b(t) \cdot x + c(t)} \in L^2(\R^d,\C)$.
\end{remark}

\begin{proof}[Proof of Lemma \ref{Lem:explicit}]
As noticed in \cite{BiMy76} in the absence of potential, and in
\cite{carles-ferriere} for the present case, Gaussian initial data
lead to Gaussian solutions in \eqref{logNLS_x2}. This is readily checked by seeking the solution to \eqref{logNLS_x2} under the form
$\psi(t,x):=e^{\phi(t,x)}$, with $\phi(t,x):=-a(t)\frac{|x|^2}{2}+b(t)
\cdot x + c(t)$. Plugging this expression into \eqref{logNLS_x2} yields the equivalent equation
\begin{equation} \label{eq_phase}
    i \partial_{t} \phi + \frac{1}{2} \Delta \phi+\frac{1}{2}
    \sum_{j=1}^d (\partial_{x_j} \phi)^2 =2 \lambda \RE(\phi)
    +u_0(t)\frac{|x|^2}{2} + u(t)
    \cdot x, \qquad \phi(0,x) =-a_0\frac{|x|^2}{2}+b_0
\cdot x +c_0.
\end{equation}
Canceling the coefficients of the above polynomial in $x$ yields 
the system  \eqref{eq:a}-\eqref{eq:b}-\eqref{eq:c}, which therefore is
equivalent to \eqref{logNLS_x2}.
\end{proof}

\begin{proof}[Proof of Theorem \ref{Thm:negatif} when $V=0$:]
$\mathcal{G}$ is a strict closed subset of $(\Sph ,\|.\|_{L^2})$ (see \cite[Lemmas 29 \& 30]{beauchard-pozzoli}). By Lemma \ref{Lem:explicit}, if $\psi_0 \in \mathcal{G}$, then the reachable set from $\psi_0$ is contained in $\mathcal{G}$, thus it is not dense in $(\Sph ,\|.\|_{L^2})$. 
\end{proof}

\subsection{General case: approximation}
\label{subsec:V_non_nul}

\begin{proposition} \label{prop:dist}
Let $\psi_0(x):=(2\pi)^{-d/4} e^{-|x|^2/2}$.
There exists $C>0$ such that, for every $u=(u_1,\dots,u_d) \in PWC(0,1)$, the solution of $\eqref{logNLS_fin}$ with initial condition $\psi_0$ satisfies, for every $t \in [0,1]$,
$\text{dist}_{L^2}(\psi(t),\mathcal{G}) \leq C t$. 
\end{proposition}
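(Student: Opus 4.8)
The plan is to compare $\psi(t)$ to a genuinely Gaussian solution of a nearby \emph{quadratic} problem, for which Gaussians are exactly preserved by (the matrix-valued version of) Lemma~\ref{Lem:explicit}. Since $\mathcal{G}\subset\Sph$ and any two points of $\Sph$ are at $L^2$-distance at most $2$, it suffices to produce a time $t_0\in(0,1]$ depending only on $d$, $\lambda$ and $\|D^2V\|_{L^\infty}$ (in particular \emph{independent of the control} $u$), and to prove $\mathrm{dist}_{L^2}(\psi(t),\mathcal{G})\le Ct$ on $[0,t_0]$; on $[t_0,1]$ one then simply invokes $\mathrm{dist}_{L^2}(\psi(t),\mathcal{G})\le 2\le (2/t_0)\,t$.

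To build the comparison state I would follow the Gaussian wavepacket (Hagedorn-type) dynamics: let $\tilde\psi(t,x)=e^{-\frac12 x^\top A(t)x+b(t)\cdot x+c(t)}$ be the solution of \eqref{logNLS_fin} in which $V$ is replaced, \emph{self-consistently}, by its second order Taylor polynomial at the moving center $q(t):=\RE(A(t))^{-1}\RE(b(t))$ of $\tilde\psi(t)$, namely
\begin{equation*}
\tilde V_t(x):=V(q(t))+\nabla V(q(t))\cdot(x-q(t))+\tfrac12\,(x-q(t))^\top D^2V(q(t))\,(x-q(t)),
\end{equation*}
keeping the same initial datum $\psi_0$ and the same control $u$. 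As in Lemma~\ref{Lem:explicit}, plugging the Gaussian ansatz into this potential (time-dependent, but quadratic in $x$ and real-valued) reduces the PDE to a closed ODE system for $(A,b,c)$; in particular $\tilde\psi(t)\in\mathcal{G}$ wherever the system is defined, and $\tilde\psi(0)=\psi_0$. The crucial structural point is that the matrix Riccati equation governing $A$, of the form $i\dot A=A^2-D^2V(q(t))+2\lambda\RE(A)$ with $A(0)=I$, is forced only by $D^2V(q(t))$, whose norm is bounded by $\|D^2V\|_{L^\infty}$ \emph{regardless of where $q(t)$ is}. Hence the width of $\tilde\psi$ is controlled uniformly in $u$: there exist $t_0\in(0,1]$ and $0<c_0\le C_1$, depending only on $d,\lambda,\|D^2V\|_{L^\infty}$, with $c_0 I\le \RE(A(t))\le C_1 I$ on $[0,t_0]$. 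This is the non-autonomous, matrix analogue of the global solvability used in the Remark after Lemma~\ref{Lem:explicit} (see \cite[Lemma~4.1]{carles-dong}, \cite{carles-ferriere}); the motion of the center $q$, which does depend on $u$, plays no role here.

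With $w:=\psi-\tilde\psi$ I would then run the standard $L^2$ energy estimate. Writing $V\psi-\tilde V_t\tilde\psi=Vw+(V-\tilde V_t)\tilde\psi$ and noting that $V$, $\tilde V_t$ and $\sum_j u_j x_j$ are real-valued (so they vanish after taking imaginary parts, exactly as in the proof of Proposition~\ref{Prop:WP}), while the logarithmic terms are handled by \eqref{Cazenave}, one gets
\begin{equation*}
\frac{d}{dt}\|w(t)\|_{L^2}\le 2|\lambda|\,\|w(t)\|_{L^2}+\big\|(V-\tilde V_t)\,\tilde\psi(t)\big\|_{L^2},\qquad w(0)=0.
\end{equation*}
The Taylor remainder obeys $|V(x)-\tilde V_t(x)|\le \|D^2V\|_{L^\infty}\,|x-q(t)|^2$, so the source is bounded by $\|D^2V\|_{L^\infty}\,\big\||x-q(t)|^2\tilde\psi(t)\big\|_{L^2}$, a fourth moment of the normalized Gaussian $\tilde\psi(t)$ \emph{about its own center} $q(t)$; being translation invariant, it depends only on $\RE(A(t))^{-1}$ and is therefore $\le K$ on $[0,t_0]$ for some $K=K(d,\lambda,\|D^2V\|_{L^\infty})$ independent of $u$. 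Gronwall's lemma yields $\|w(t)\|_{L^2}\le e^{2|\lambda|t}Kt$ on $[0,t_0]$, whence $\mathrm{dist}_{L^2}(\psi(t),\mathcal{G})\le\|w(t)\|_{L^2}\le Ct$, and the trivial bound on $[t_0,1]$ concludes.

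The main obstacle is precisely the uniformity in $u$: a comparison against a Gaussian frozen at the origin fails, because the control can push the center of the wavepacket arbitrarily far, after which $\||x|^2\tilde\psi\|_{L^2}$ would blow up with $u$. The resolution is to let the comparison potential track the moving center, so that the $u$-dependence is confined to the (irrelevant) translation $q(t)$, while the $u$-independent width estimate, coming solely from the boundedness of $D^2V$, controls the error. A secondary technical point is the justification of the energy identity for the unbounded real potentials $V$ and $\sum_j u_j x_j$; this is done by working first with the $\Sigma$-regular datum $\psi_0$ of Proposition~\ref{Prop:WP}, for which $\psi(t)$, $\tilde\psi(t)$ and $|x|^2\tilde\psi(t)$ all lie in $L^2$ and the integrations by parts are licit as in the proof of Proposition~\ref{Prop:WP}.
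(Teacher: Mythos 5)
Your overall strategy -- comparing $\psi(t)$ to a moving Gaussian wavepacket whose width is controlled uniformly in $u$ because it is forced only through $D^2V$, and whose ($u$-dependent) center is irrelevant for the distance to $\mathcal{G}$ -- is exactly the coherent-state approximation used in the paper, and your energy estimate (real potentials dropping out of the imaginary part, Lemma~\ref{lem:CH} for the logarithmic terms, a uniform bound on a Gaussian moment about the moving center, then Gr\"onwall) matches the paper's computation. The device of restricting to $[0,t_0]$ and using the trivial bound $\mathrm{dist}\le 2\le (2/t_0)t$ afterwards is legitimate.

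There is, however, one genuine gap: your comparison state is \emph{not} in $\mathcal{G}$. By Taylor-expanding $V$ to \emph{second} order about $q(t)$, you force the width through the matrix Riccati equation $i\dot A=A^2-D^2V(q(t))+2\lambda\RE(A)$, and since $D^2V(q(t))$ is in general not a multiple of the identity, $A(t)$ immediately leaves the scalar matrices. But $\mathcal{G}$ as defined in \eqref{Gauss} consists of \emph{isotropic} Gaussians $e^{-z_2|x|^2+z_1\cdot x+z_0}$ with $z_2\in\C$ a scalar, so the claim ``$\tilde\psi(t)\in\mathcal{G}$ wherever the system is defined'' is false, and the concluding inequality $\mathrm{dist}_{L^2}(\psi(t),\mathcal{G})\le\|w(t)\|_{L^2}$ does not follow. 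Repairing this by estimating $\mathrm{dist}_{L^2}(\tilde\psi(t),\mathcal{G})$ is not immediate either: although $A(t)=I+\mathcal O(t)$, the center $\RE(A(t))^{-1}\RE(b(t))$ can be arbitrarily far from the origin (it depends on $u$), so one must compare two Gaussians recentered at the same point, which requires an extra argument you do not supply. The clean fix is the one the paper adopts implicitly: truncate the Taylor expansion at \emph{first} order (equivalently, factor out the translation and linear phase via $\psi(t,x)=\xi(t,x-q(t))e^{i(p(t)\cdot x+\theta(t))}$ and compare $\xi$ with the free logarithmic Gaussian $\Gamma$ of Lemma~\ref{Lem:explicit}). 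The Taylor remainder is still bounded by $\|D^2V\|_{L^\infty}|x-q(t)|^2$, so nothing is lost in the error estimate, while the width equation becomes unforced and scalar, the comparison Gaussian is independent of $u$ and globally defined (so the $[0,t_0]$ splitting is not even needed), and membership in $\mathcal{G}$ is immediate.
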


\begin{proof}
The proof relies on a representation formula for $\psi$ and an error estimate that holds uniformly with respect to the control. 
For $u=(u_1,\dots,u_d) \in PWC(\R_+,\R^d)$, the solution of \eqref{logNLS_fin} satisfies
\begin{equation*}
\psi(t,x)=\xi(t,y:=x-q(t)) e^{i \left( p(t) \cdot x  + \theta(t)\right)} ,
\end{equation*}
where
\begin{equation*}
  \begin{array}{ll}
    \left\lbrace
    \begin{array}{l}
\dot{q}(t)=p(t), \\
\dot{p}(t)=-\nabla V(q(t))-u(t), \\
(p,q)(0)=(0,0),
\end{array}\right.
\qquad
&
\qquad
\begin{array}{l}
\theta(t):=\int_0^t \left(  q(s) \cdot \nabla V(q(s)) - V(q(s))-\frac{1}{2}|p(s)|^2 \right) ds,
\\
W(t,y):=V(y+q(t))-V(q(t))- \nabla V(q(t)) \cdot y,
\end{array}
\end{array}
\end{equation*}
\begin{equation*}
  \left\lbrace \begin{array}{l}
\left(i \partial_t + \frac{1}{2} \Delta \right)\xi =W(t,y) \xi + \lambda \xi \log|\xi|^2, \qquad (t,y) \in \R_+ \times \R^d, \\
\xi(0,y)=\psi_0(y).
               \end{array}\right.
\end{equation*}             
Note that $(q,p)(t)$ is defined for every $t \in \R_+$ because $\nabla V$ is globally Lipschitz. We introduce the solution $\Gamma$ of 
\begin{equation*}
  \left\lbrace \begin{array}{l}
\left(i \partial_t + \frac{1}{2} \Delta \right)\Gamma = \lambda \Gamma \log|\Gamma|^2, \qquad (t,y) \in \R_+ \times \R^d, \\
\Gamma(0,y)=\psi_0(y).
               \end{array}\right.
 \end{equation*}
By Lemma \ref{Lem:explicit}, there exist functions $a,b,c$ (that do not depend on $u$) such that 
\begin{equation*}
  \Gamma(t,y)=e^{-a(t)|y|^2/2+ b(t) \cdot y+c(t)}.
\end{equation*}
Using Lemma \ref{lem:CH}, we obtain
\begin{equation*}
  \frac{d}{dt} \| (\xi-\Gamma)(t) \|_{L^2}^2 \leq 4|\lambda| \|
  (\xi-\Gamma)(t) \|_{L^2}^2 + 2\| W(t) \Gamma(t) \|_{L^2}
 \| (\xi-\Gamma)(t) \|_{L^2}.
\end{equation*}
Using Taylor formula, we get
\begin{equation*}
\| W(t) \Gamma(t) \|_{L^2}
\leq \| D^2 V \|_{L^\infty} \|\, |y|^2 \Gamma(t)\, \|_{L^2}.
\end{equation*}
Thus, there exists $C>0$ (that depends on $V,a,b,c,\psi_0$, but does not depend on $u$) such that, for every $t \in [0,1]$,
$\| W(t) \Gamma(t) \|_{L^2} \leq C$. Finally, for every $t\in[0,1]$,
\begin{equation*}
  \| (\xi-\Gamma)(t) \|_{L^2} \leq \int_0^t \| W(s) \Gamma(s)
  \|_{L^2} e^{4|\lambda|s} ds \leq C e^{4|\lambda|}  t.
\end{equation*}
In conclusion, the function
$\widetilde{\Gamma}_{u}:(t,x) \mapsto \Gamma(t,x-q(t)) e^{i(p(t)\cdot x+\theta(t))}$ belongs to $\mathcal{G}$ thus, for every $t \in [0,1]$,
\begin{equation*}
  \text{dist}_{L^2}(\psi(t;u,\psi_0),\mathcal{G}) \leq 
\|\psi(t)-\widetilde{\Gamma}_u(t)\|_{L^2} =
\|\xi(t)-\Gamma(t)\|_{L^2} \leq
C e^{4|\lambda|}  t.
\end{equation*}
\end{proof}

The technique used above corresponds to the coherent states approximation in semiclassical analysis (see e.g. \cite{RoCo21,CaBook2}).

\begin{proof}[Proof of Theorem \ref{Thm:negatif} in the general case:] $\mathcal{G}$ is a strict closed subset of $(\Sph ,\|.\|_{L^2})$ (see \cite[Lemmas 29 \& 30]{beauchard-pozzoli}) thus there exists $\psi_f \in \Sph$ such that $\delta:=\text{dist}_{L^2}(\psi_f,\mathcal{G})>0$. Let $\psi_0$ and $C$ be as in Proposition \ref{prop:dist}. Then, for every $T\leq\delta/2C$ and $u \in PWC((0,T),\R^d)$, we have $\|\psi(t;u,\psi_0)-\psi_f\|_{L^2}>\delta/2$. Thus, the reachable set from $\psi_0$ in any time smaller than $(\delta/2C)^2$ is not dense in $(\Sph,\|.\|_{L^2})$.
\end{proof}

\appendix

\section{Appendix} \label{app}
 
\begin{proof}[Proof of Lemma~\ref{lem:reachable-operators}]
\emph{Step 1: Semi-group structure.}
Let $L_1, L_2$ be  $L^2$-STAR maps. Let $\psi_0 \in \Sph $ and $\varepsilon>0$. 
There exist $T_2 \in [0,\varepsilon/2]$, 
$\theta_2 \in \R$ and $u_2 \in PWC(0,T_2)$ such that
\begin{equation} \label{u2}
\| \psi(T_2; u_2, L_1 \psi_0 ) -  e^{i\theta_2} L_2 L_1 \psi_0 \|_{L^2}< \frac{\varepsilon}{2}.
\end{equation}
By $\|.\|_{L^2}$-continuity of the map $\psi(T_2;u_2,\cdot)$, there exists $\delta>0$ such that, for every $\xi \in \Sph $  
\begin{equation} \label{continuite}
\|\xi-L_1 \psi_0\|_{L^2}<\delta
\qquad \Rightarrow \qquad 
\|\psi(T_2;u_2,\xi_0)-\psi(T_2;u_2,L_1 \psi_0)\|_{L^2} < \frac{\varepsilon}{2}.
\end{equation}
There exist $T_1 \in [0,\varepsilon/2]$, 
$\theta_1 \in \R$ and $u_1 \in PWC(0,T_2)$ such that
\begin{equation*}
  \| \psi(T_1;u_1,\psi_0)- e^{i\theta_1} L_1 \psi_0 \|_{L^2} < \delta.
\end{equation*}
By applying \eqref{continuite} with $\xi =
e^{-i\theta_1} \psi(T_1;u_1,\psi_0)$ and the property
$\psi(\cdot;\cdot,e^{i\theta} \cdot)=e^{i\theta} \psi(\cdot;\cdot, \cdot)$,
we obtain
\begin{equation} \label{continuite_cor}
\|\psi(T_2;u_2, \psi(T_1;u_1,\psi_0)) - e^{i\theta_1}\psi(T_2;u_2,L_1 \psi_0)\|_{L^2} < \frac{\varepsilon}{2}.
\end{equation}
We consider the time $T:=T_1+T_2$ that belongs to $[0,\varepsilon]$ and the control $u$ given by the concatenation of $u_1$ and $u_2$:
$u:=u_1 \sharp u_2 \in PWC(0,T)$. Then, using the triangular
inequality, \eqref{u2} and \eqref{continuite_cor}, we obtain 
\begin{equation*}
\begin{aligned}
& \| \psi(T;u,\psi_0) -  e^{i(\theta_1+\theta_2)} L_2 L_1 \psi_0 \|_{L^2} 
\\ \leq & \| \psi(T_2;u_2,\psi(T_1;u_1,\psi_0))- e^{i \theta_1} \psi(T_2;u_2,  L_1 \psi_0) \|_{L^2} + \| \psi(T_2;u_2, L_1 \psi_0)-   e^{i\theta_2} L_2 L_1 \psi_0\|_{L^2}
  \leq  \varepsilon.
\end{aligned}
\end{equation*}

\noindent \emph{Step 2: Stability by strong convergence.} Let $(L_n)_{n\in\N}$ be a sequence of $L^2$-STAR maps and $L:\Sph  \to \Sph $. We assume that $(L_n)_{n\in\N}$ strongly converges towards $L$, i.e. for every $\psi \in L^2(M,\C)$, $\|L_n(\psi)-L(\psi)\|_{L^2} \to 0$ as $n \to \infty$.

 Let $\psi_0 \in \Sph $ and $\varepsilon>0$. There exists $n \in \N$ such that 
$\|L_n(\psi_0) -L(\psi_0)\|_{L^2} < \varepsilon/2$. There exists $T \in [0,\eps]$, $\theta \in \R$ and $u\in \text{PWC}(0,T)$ such that 
$\| \psi(T;u,\psi_0)- e^{i\theta} L_n \psi_0 \|_{L^2} < \varepsilon/2$. Then 
$
\| \psi(T;u,\psi_0)- e^{i\theta} L \psi_0 \|_{L^2}
\leq \| \psi(T;u,\psi_0)- e^{i\theta} L_n \psi_0 \|_{L^2} + 
\| L_n(\psi_0) -L(\psi_0)\|_{L^2} < \varepsilon.
$
\end{proof}

\textbf{Acknowledgments.} The authors are supported by the Centre Henri Lebesgue, program ANR-11-
LABX-0020. Karine Beauchard acknowledges support from grant
4 (project TRECOS) and from the Fondation Simone et Cino
Del Duca – Institut de France. Eugenio Pozzoli acknowledges support from
grants ANR-24-CE40-3008-01 (project QuBiCCS). This project has received
financial support from the CNRS through the MITI interdisciplinary programs.

\bibliographystyle{abbrv}
\bibliography{references}



\end{document}